
\documentclass[12pt]{amsart}
\usepackage{txfonts}      
\usepackage{amssymb}
\usepackage{eucal}
\usepackage{amsmath}
\usepackage{amscd}
\usepackage{xcolor}
\usepackage{multicol}
\usepackage[all]{xy}           
\usepackage{graphicx}
\usepackage{color}
\usepackage{colordvi}
\usepackage{xspace}
\usepackage{tikz}
\usepackage{makecell}
\usepackage{appendix}
\usepackage{amsthm}
\usepackage{graphicx}
\usepackage{tabularx}
\usepackage{float}

\usepackage{ifpdf}
\ifpdf
\usepackage[colorlinks,final,backref=page,hyperindex]{hyperref}
\else
\usepackage[colorlinks,final,backref=page,hyperindex,hypertex]{hyperref}
\fi

\usepackage[active]{srcltx} 




\topmargin -.8cm \textheight 21.6cm \oddsidemargin 0cm
\evensidemargin -0cm \textwidth 16cm


\begin{document}


\newtheorem{thm}{Theorem}[section]
\newtheorem{lem}[thm]{Lemma}
\newtheorem{cor}[thm]{Corollary}
\newtheorem{pro}[thm]{Proposition}
\theoremstyle{definition}
\newtheorem{defi}[thm]{Definition}
\newtheorem{ex}[thm]{Example}
\newtheorem{rmk}[thm]{Remark}
\newtheorem{pdef}[thm]{Proposition-Definition}
\newtheorem{condition}[thm]{Condition}

\renewcommand{\labelenumi}{{\rm(\alph{enumi})}}
\renewcommand{\theenumi}{\alph{enumi}}

\newcommand {\emptycomment}[1]{} 

\newcommand{\nc}{\newcommand}
\newcommand{\delete}[1]{}

\nc{\todo}[1]{\tred{To do:} #1}

\nc{\tred}[1]{\textcolor{red}{#1}}
\nc{\tblue}[1]{\textcolor{blue}{#1}}
\nc{\tgreen}[1]{\textcolor{green}{#1}}
\nc{\tpurple}[1]{\textcolor{purple}{#1}}
\nc{\tgray}[1]{\textcolor{gray}{#1}}
\nc{\torg}[1]{\textcolor{orange}{#1}}
\nc{\tmag}[1]{\textcolor{magenta}}
\nc{\btred}[1]{\textcolor{red}{\bf #1}}
\nc{\btblue}[1]{\textcolor{blue}{\bf #1}}
\nc{\btgreen}[1]{\textcolor{green}{\bf #1}}
\nc{\btpurple}[1]{\textcolor{purple}{\bf #1}}

    \nc{\mlabel}[1]{\label{#1}}  
    \nc{\mcite}[1]{\cite{#1}}  
    \nc{\mref}[1]{\ref{#1}}  
    \nc{\meqref}[1]{\eqref{#1}}  
    \nc{\mbibitem}[1]{\bibitem{#1}} 

\delete{
    \nc{\mcite}[1]{\cite{#1}{\small{\tt{{\ }(#1)}}}}  
    \nc{\mref}[1]{\ref{#1}{\small{\tred{\tt{{\ }(#1)}}}}}  
    \nc{\meqref}[1]{\eqref{#1}{{\tt{{\ }(#1)}}}}  
    \nc{\mbibitem}[1]{\bibitem[\bf #1]{#1}} 
}


\nc{\cm}[1]{\textcolor{red}{Chengming:#1}}
\nc{\yy}[1]{\textcolor{blue}{Yanyong: #1}}
\nc{\li}[1]{\textcolor{purple}{#1}}
\nc{\lir}[1]{\textcolor{purple}{Li:#1}}


\nc{\tforall}{\ \ \text{for all }}
\nc{\hatot}{\,\widehat{\otimes} \,}
\nc{\complete}{completed\xspace}
\nc{\wdhat}[1]{\widehat{#1}}

\nc{\ts}{\mathfrak{p}}
\nc{\mts}{c_{(i)}\ot d_{(j)}}

\nc{\NA}{{\bf NA}}
\nc{\LA}{{\bf Lie}}
\nc{\CLA}{{\bf CLA}}

\nc{\cybe}{CYBE\xspace}
\nc{\nybe}{NYBE\xspace}
\nc{\ccybe}{CCYBE\xspace}

\nc{\ndend}{pre-Novikov\xspace}
\nc{\calb}{\mathcal{B}}
\nc{\rk}{\mathrm{r}}
\newcommand{\g}{\mathfrak g}
\newcommand{\h}{\mathfrak h}
\newcommand{\pf}{\noindent{$Proof$.}\ }
\newcommand{\frkg}{\mathfrak g}
\newcommand{\frkh}{\mathfrak h}
\newcommand{\Id}{\rm{Id}}
\newcommand{\gl}{\mathfrak {gl}}
\newcommand{\ad}{\mathrm{ad}}
\newcommand{\add}{\frka\frkd}
\newcommand{\frka}{\mathfrak a}
\newcommand{\frkb}{\mathfrak b}
\newcommand{\frkc}{\mathfrak c}
\newcommand{\frkd}{\mathfrak d}
\newcommand {\comment}[1]{{\marginpar{*}\scriptsize\textbf{Comments:} #1}}

\nc{\vspa}{\vspace{-.1cm}}
\nc{\vspb}{\vspace{-.2cm}}
\nc{\vspc}{\vspace{-.3cm}}
\nc{\vspd}{\vspace{-.4cm}}
\nc{\vspe}{\vspace{-.5cm}}


\nc{\disp}[1]{\displaystyle{#1}}
\nc{\bin}[2]{ (_{\stackrel{\scs{#1}}{\scs{#2}}})}  
\nc{\binc}[2]{ \left (\!\! \begin{array}{c} \scs{#1}\\
    \scs{#2} \end{array}\!\! \right )}  
\nc{\bincc}[2]{  \left ( {\scs{#1} \atop
    \vspace{-.5cm}\scs{#2}} \right )}  
\nc{\ot}{\otimes}
\nc{\sot}{{\scriptstyle{\ot}}}
\nc{\otm}{\overline{\ot}}
\nc{\ola}[1]{\stackrel{#1}{\la}}

\nc{\scs}[1]{\scriptstyle{#1}} \nc{\mrm}[1]{{\rm #1}}

\nc{\dirlim}{\displaystyle{\lim_{\longrightarrow}}\,}
\nc{\invlim}{\displaystyle{\lim_{\longleftarrow}}\,}

\nc{\bfk}{{\bf k}} \nc{\bfone}{{\bf 1}}
\nc{\rpr}{\circ}
\nc{\dpr}{{\tiny\diamond}}
\nc{\rprpm}{{\rpr}}

\nc{\mmbox}[1]{\mbox{\ #1\ }} \nc{\ann}{\mrm{ann}}
\nc{\Aut}{\mrm{Aut}} \nc{\can}{\mrm{can}}
\nc{\twoalg}{{two-sided algebra}\xspace}
\nc{\colim}{\mrm{colim}}
\nc{\Cont}{\mrm{Cont}} \nc{\rchar}{\mrm{char}}
\nc{\cok}{\mrm{coker}} \nc{\dtf}{{R-{\rm tf}}} \nc{\dtor}{{R-{\rm
tor}}}
\renewcommand{\det}{\mrm{det}}
\nc{\depth}{{\mrm d}}
\nc{\End}{\mrm{End}} \nc{\Ext}{\mrm{Ext}}
\nc{\Fil}{\mrm{Fil}} \nc{\Frob}{\mrm{Frob}} \nc{\Gal}{\mrm{Gal}}
\nc{\GL}{\mrm{GL}} \nc{\Hom}{\mrm{Hom}} \nc{\hsr}{\mrm{H}}
\nc{\hpol}{\mrm{HP}}  \nc{\id}{\mrm{id}} \nc{\im}{\mrm{im}}

\nc{\incl}{\mrm{incl}} \nc{\length}{\mrm{length}}
\nc{\LR}{\mrm{LR}} \nc{\mchar}{\rm char} \nc{\NC}{\mrm{NC}}
\nc{\mpart}{\mrm{part}} \nc{\pl}{\mrm{PL}}
\nc{\ql}{{\QQ_\ell}} \nc{\qp}{{\QQ_p}}
\nc{\rank}{\mrm{rank}} \nc{\rba}{\rm{RBA }} \nc{\rbas}{\rm{RBAs }}
\nc{\rbpl}{\mrm{RBPL}}
\nc{\rbw}{\rm{RBW }} \nc{\rbws}{\rm{RBWs }} \nc{\rcot}{\mrm{cot}}
\nc{\rest}{\rm{controlled}\xspace}
\nc{\rdef}{\mrm{def}} \nc{\rdiv}{{\rm div}} \nc{\rtf}{{\rm tf}}
\nc{\rtor}{{\rm tor}} \nc{\res}{\mrm{res}} \nc{\SL}{\mrm{SL}}
\nc{\Spec}{\mrm{Spec}} \nc{\tor}{\mrm{tor}} \nc{\Tr}{\mrm{Tr}}
\nc{\mtr}{\mrm{sk}}

\nc{\ab}{\mathbf{Ab}} \nc{\Alg}{\mathbf{Alg}}

\nc{\BA}{{\mathbb A}} \nc{\CC}{{\mathbb C}} \nc{\DD}{{\mathbb D}}
\nc{\EE}{{\mathbb E}} \nc{\FF}{{\mathbb F}} \nc{\GG}{{\mathbb G}}
\nc{\HH}{{\mathbb H}} \nc{\LL}{{\mathbb L}} \nc{\NN}{{\mathbb N}}
\nc{\QQ}{{\mathbb Q}} \nc{\RR}{{\mathbb R}} \nc{\BS}{{\mathbb{S}}} \nc{\TT}{{\mathbb T}}
\nc{\VV}{{\mathbb V}} \nc{\ZZ}{{\mathbb Z}}


\nc{\calao}{{\mathcal A}} \nc{\cala}{{\mathcal A}}
\nc{\calc}{{\mathcal C}} \nc{\cald}{{\mathcal D}}
\nc{\cale}{{\mathcal E}} \nc{\calf}{{\mathcal F}}
\nc{\calfr}{{{\mathcal F}^{\,r}}} \nc{\calfo}{{\mathcal F}^0}
\nc{\calfro}{{\mathcal F}^{\,r,0}} \nc{\oF}{\overline{F}}
\nc{\calg}{{\mathcal G}} \nc{\calh}{{\mathcal H}}
\nc{\cali}{{\mathcal I}} \nc{\calj}{{\mathcal J}}
\nc{\call}{{\mathcal L}} \nc{\calm}{{\mathcal M}}
\nc{\caln}{{\mathcal N}} \nc{\calo}{{\mathcal O}}
\nc{\calp}{{\mathcal P}} \nc{\calq}{{\mathcal Q}} \nc{\calr}{{\mathcal R}}
\nc{\calt}{{\mathcal T}} \nc{\caltr}{{\mathcal T}^{\,r}}
\nc{\calu}{{\mathcal U}} \nc{\calv}{{\mathcal V}}
\nc{\calw}{{\mathcal W}} \nc{\calx}{{\mathcal X}}
\nc{\CA}{\mathcal{A}}

\nc{\fraka}{{\mathfrak a}} \nc{\frakB}{{\mathfrak B}}
\nc{\frakb}{{\mathfrak b}} \nc{\frakd}{{\mathfrak d}}
\nc{\oD}{\overline{D}}
\nc{\frakF}{{\mathfrak F}} \nc{\frakg}{{\mathfrak g}}
\nc{\frakm}{{\mathfrak m}} \nc{\frakM}{{\mathfrak M}}
\nc{\frakMo}{{\mathfrak M}^0} \nc{\frakp}{{\mathfrak p}}
\nc{\frakS}{{\mathfrak S}} \nc{\frakSo}{{\mathfrak S}^0}
\nc{\fraks}{{\mathfrak s}} \nc{\os}{\overline{\fraks}}
\nc{\frakT}{{\mathfrak T}}
\nc{\oT}{\overline{T}}
\nc{\frakX}{{\mathfrak X}} \nc{\frakXo}{{\mathfrak X}^0}
\nc{\frakx}{{\mathbf x}}
\nc{\frakTx}{\frakT}      
\nc{\frakTa}{\frakT^a}        
\nc{\frakTxo}{\frakTx^0}   
\nc{\caltao}{\calt^{a,0}}   
\nc{\ox}{\overline{\frakx}} \nc{\fraky}{{\mathfrak y}}
\nc{\frakz}{{\mathfrak z}} \nc{\oX}{\overline{X}}

\font\cyr=wncyr10


\title[On quadratic Novikov algebras]{On quadratic Novikov algebras}

\author{Xiaofeng Dong}
\address{School of Mathematics, Hangzhou Normal University,
	Hangzhou, 311121, China}
\email{xfdong@stu.hznu.edu.cn}

\author{Yanyong Hong}
\address{School of Mathematics, Hangzhou Normal University,
	Hangzhou, 311121, China}
\email{yyhong@hznu.edu.cn}

\subjclass[2010]{17A45, 17A60, 17D25}

\keywords{Novikov algebra, quadratic Novikov algebra, double extension, quasi-Frobenius Novikov algebra}

\begin{abstract}
	A quadratic Novikov algebra is a Novikov algebra $(A, \circ)$ with a symmetric and nondegenerate bilinear form $B(\cdot,\cdot)$ satisfying $B(a\circ b, c)=-B(b, a\circ c+c\circ a)$ for all $a$, $b$, $c\in A$. This notion appeared in the theory of Novikov bialgebras. In this paper, we first investigate some properties of quadratic Novikov algebras and give a decomposition theorem of quadratic Novikov algebras. Then we present a classification of quadratic Novikov algebras of dimensions $2$ and $3$ over $\mathbb{C}$ up to isomorphism. Finally, a construction of quadratic Novikov algebras called double extension is presented and we show that any quadratic Novikov algebra containing a nonzero isotropic ideal can be obtained by double extensions. Based on double extension, an example of quadratic Novikov algebras of dimension 4 is given.
\end{abstract}

\maketitle
\delete{
	\vspace{-1.2cm}
	
	\tableofcontents
	
	\vspace{-1.2cm}}

\allowdisplaybreaks

\section{Introduction}
\vspace{-.2cm}
Novikov algebras were firstly introduced in the study of Hamiltonian
operators in the formal variational calculus~\mcite{GD1, GD2}. They also appeared in connection with Poisson brackets of hydrodynamic type~\mcite{BN}. It was shown in \cite{X1} that Novikov algebras correspond to a class of Lie conformal algebras which give an axiomatic
description of the singular part of operator product expansion of chiral fields in conformal field
theory \mcite{K1}. Moreover, Novikov algebras are also an important subclass of pre-Lie algebras,
which are closely related to many fields in mathematics and physics such as  affine manifolds and affine
structures on Lie groups \mcite{Ko},  convex homogeneous cones \mcite{V}, vertex algebras \mcite{BLP, BK} and deformation of
associative algebras \mcite{Ger}.

As we know, Killing form play important roles in the theory of Lie algebras. Therefore, it is also natural and important to investigate Novikov algebras with some bilinear form. Note that Novikov algebras with some bilinear form have been widely studied in \cite{AB, BM2, G, Le, SCM, Z, ZC}. In this paper, we plan to investigate a class of Novikov algebras with a symmetric and nondegenerate bilinear form satisfying a new invariant property, which are called quadratic Novikov algebras in \cite{HBG}. It was shown in \cite{HBG} that a finite-dimensional Novikov bialgebra is  equivalent to a Manin triple of Novikov algebras which is a quadratic Novikov algebra, and there is a Lie bialgebra structure on the tensor product of a finite-dimensional Novikov bialgebra and a finite-dimensional quadratic right Novikov algebra. In addition, by \cite{HBG1}, quadratic Novikov algebras have close relationships with Lie conformal algebras with a symmetric invariant nondegenerate conformal bilinear form. Therefore, quadratic Novikov algebras are important in the theories of Novikov bialgebras, Lie bialgebras and Lie conformal bialgebras. In addition, for a finite-dimensional quadratic Novikov algebra $(A, \circ, B(\cdot,\cdot))$, the adjoint representation of $(A, \circ)$ is isomorphic to the dual representation of the adjoint representation (see Proposition \ref{pro-1}). Therefore, quadratic Novikov algebras play a role in the theory of Novikov algebras analogous to the roles of Frobenius algebras in the theory of associative algebras and quadratic Lie algebras in the theory of Lie algebras. So it is meaningful to investigate quadratic Novikov algebras.

In this paper, we first investigate some properties of quadratic Novikov algebras. We show that for a finite-dimensional quadratic Novikov algebra $(A, \circ, B(\cdot,\cdot))$, the adjoint representation of $(A, \circ)$ is isomorphic to the dual representation of the adjoint representation. Moreover, by quadratic Novikov algebras, we present two constructions of quasi-Frobenius Novikov algebras which have close relationships with infinite-dimensional quasi-Frobenius Lie algebras \cite{HBG}. In addtion, we show that any finite-dimensional quadratic Novikov algebra can be decomposed into a direct sum of several nondegenerate ideals, where any two different nondegenerate ideals are orthogonal. Then a classification of quadratic Novikov algebras of dimensions $2$ and $3$ over $\mathbb{C}$ up to isomorphism is given. Note that double extensions appeared in the study of kinds of algebras with a bilinear form, see, e.g., \cite{AB, BB, BS, MR, WZ}. Motivated by these works, we introduce the notion of double extensions of a quadratic Novikov algebra $(A_1, \circ, B_1(\cdot,\cdot))$ by a Novikov algebra $(A_2, \circ)$ with a symmetric bilinear form $\tau(\cdot,\cdot)$.  It should be pointed out that $\tau(\cdot,\cdot)$  may not be invariant in the definition of double extensions, which is a little different from those in the study of other algebras with a bilinear form. The sufficient and necessary condition for such double extension is presented. We show that if a quadratic Novikov algebra has a nonzero isotropic ideal, then it is isomorphic to some double extension of a lower-dimensional quadratic Novikov algebra by a Novikov algebra with a symmetric bilinear form. By the definition of double extensions, it is shown that all nontrivial quadratic Novikov algebras of dimension $3$ are obtained from double extensions of a trivial quadratic Novikov algebra of dimension $1$ by some $1$-dimensional Novikov algebra with a symmetric bilinear form. Moreover, an example of $4$-dimensional quadratic Novikov algebras is given by double extensions.

This paper is organized as follows. In Section 2, we recall the definition of quadratic Novikov algebras and investigate some properties of quadratic Novikov algebras. Moreover, a decomposition theorem of quadratic Novikov algebras is given. Section 3 is devoted to classifying  quadratic Novikov algebras of dimensions $2$ and $3$ over $\mathbb{C}$ up to isomorphism. In Section 4,
the notion of double extensions of quadratic Novikov algebras is introduced and the sufficient and necessary condition for such double extension is presented. We show that if a quadratic Novikov algebra has a nonzero isotropic ideal, then it is isomorphic to some double extension of a lower-dimensional quadratic Novikov algebra by a Novikov algebra with a symmetric bilinear form. Moreover, applying double extensions, we present an example of $4$-dimensional quadratic Novikov algebras and show that all nontrivial quadratic Novikov algebras of dimension $3$ are obtained from double extensions of a trivial quadratic Novikov algebra of dimension $1$ by some $1$-dimensional Novikov algebra with a symmetric bilinear form.

\smallskip
\noindent
{\bf Notations.}
Throughout this paper, let  $\bf k$ be a field of characteristic zero and $\mathbb{C}$ be the field of complex numbers. All vector spaces and algebras over $\bf k$ are assumed to be finite-dimensional even though many results still hold in the infinite-dimensional cases. We denote the identity map by $\id$. Let $A$ be a vector space with a binary operation $\ast$.
Define linear maps
$L_{\ast}, R_{\ast}: A\rightarrow {\rm End}_{\bf k}(A)$ by
\begin{eqnarray*}
L_{\ast}(a)b:=a\ast b,\;\; R_{\ast}(a)b:=b\ast a, \;\;\;a, b\in A.
\end{eqnarray*}
\section{Basic results of quadratic Novikov algebras}
\mlabel{sec:dou}
\vspace{-.2cm}
In this section, we will introduce some basic results about quadratic Novikov algebras.
\subsection{Quadratic Novikov algebras}
\vspace{-.3cm}
\begin{defi}\cite{GD1}
	A {\bf Novikov algebra} $(A,\circ)$ is a vector space $A$ with a binary operation $\circ$ satisfying
	\begin{align*}
		(a \circ b) \circ c - a \circ (b \circ c) &= (b \circ a) \circ c - b \circ (a \circ c),\\
		(a \circ b)\circ c &= (a\circ c)\circ b	, \;\;\; a,b,c \in A.
	\end{align*}
	
	If $a\circ b = 0$ for all $a$, $b \in A$, we call $(A,\circ)$ a {\bf trivial Novikov algebra}.
	
	A {\bf subalgebra} $S$ (resp. an {\bf ideal} $I$) of a Novikov algebra $(A,\circ)$ is a vector subspace of $A$ satisfying $S\circ S \subset S$ (resp. $ I\circ A \subset I $ and $A\circ I \subset I$).
\end{defi}

For a Novikov algebra $(A, \circ)$, define another binary operation $\star$ on $A$ by
\begin{align*}
	a\star b: = a\circ b + b\circ a , \;\; a,b \in A.
\end{align*}

\delete{Next, we recall the definitions of solvable Novikov algebras and right nilpotent Novikov algebras.
\begin{defi} (see \cite{SZ} )
	Let $(A,\circ)$ be a Novikov algebra. We define a derived series $(A^{(n)})_{n\ge 0}: \; A^{(0)} = A,\; A^{(n+1)} = A^{(n)}A^{(n)} $ and a central descending series $(A^{n})_{n\ge 0}: \; A^{0} =A, \; A^{n+1} = A^{n}A$. Then $A$ is called {\bf solvable} (resp. {\bf right nilpotent}) of length $k$ if there is positive integer $k$ such that $A^{(k)} = 0$ (resp. $A^{k} = 0$).
\end{defi}}

For a bilinear form $B(\cdot ,\cdot ): A\times A \rightarrow {\bf k} $ on a vector space $A$,  $B(\cdot,\cdot )$ is called {\bf symmetric} (resp. {\bf skewsymmetric}) if $ B(a,b)=B(b,a)$ (resp. $B(a,b)= - B(b,a)$)  for all $a$, $b \in A$. Moreover, a symmetric (resp. skewsymmetric) $ B(\cdot,\cdot)$ is called {\bf nondegenerate} if $A^{\perp} = 0$, where $A^{\perp} = \{ a \in A \mid B(a,b) = 0 \;\text{for all}\; b \in A \}$.	

Next, we introduce the definition of quadratic Novikov algebras.
\begin{defi}\cite{HBG}
	A {\bf quadratic Novikov algebra} $(A,\circ,B(\cdot ,\cdot ))$ is a Novikov algebra $(A,\circ)$ with a nondegenerate symmetric bilinear form $B( \cdot,\cdot )$ satisfying the invariant property
	\begin{eqnarray}\label{def-eq1}
		B(a\circ b, c)=-B(b, a\star c),\;\;\;a, b, c\in A.
	\end{eqnarray}
\end{defi}

\begin{rmk}\label{rmk-1}
	For a quadratic Novikov algebra $(A,\circ,B(\cdot ,\cdot ))$, by Eq. (\ref{def-eq1}), we have $B(a\circ a ,a) = 0$ for all $a\in A$.
\end{rmk}

\begin{rmk}\label{metric-M}
	Let $\{e_{1},..., e_{n}\}$ be a basis of $A$. The bilinear form $B(\cdot ,\cdot )$ on $A$ under the basis is determined by the  metric matrix $B=(b_{ij})$, where $b_{ij} = B(e_{i},e_{j})$.
	$B(\cdot ,\cdot )$ is symmetric if and only if $B$ is symmetric and $B(\cdot ,\cdot )$ is nondegenerate if and only if the determinant of $B$ is not zero.
\end{rmk}

\begin{defi}\cite{O}
	A {\bf representation} of a Novikov algebra $(A,\circ)$ is a triple $(V,l,r)$, where $V$ is a vector space and $l, r: A\rightarrow \text{End}_{\bf k}(V)$ are linear maps satisfying
	\begin{align*}
		l(a\circ b - b\circ a)v &= l(a)l(b)v - l(b)l(a)v,\\
		l(a)r(b)v - r(b)l(a)v &= r(a\circ b)v - r(a)r(b)v,\\
		l(a\circ b)v &= r(b)l(a)v,\\
		r(a)r(b)v &=  r(b)r(a)v, \;\; a,b \in A,\;\; v\in V.
	\end{align*}
\end{defi}

For a linear map $\varphi:A\rightarrow \text{End}_{{\bf k}}(V)$, we define a linear map $\varphi^{\ast}:A\rightarrow \text{End}_{{\bf k}}(V^{\ast})$ by
\begin{align*}
	\langle \varphi^{\ast}(a)f,v \rangle = -\langle f,\varphi(a)v \rangle, \;  a\in A,\; f\in V^{\ast},\; v\in V.
\end{align*}

\begin{rmk}Note that $(A,L_{\circ},R_{\circ})$ is a representation of $(A,\circ)$, \delete{where
	\begin{align*}
		L_{\circ}(a)(b) \coloneqq a\circ b,\;\; R_{\circ}(a)(b) \coloneqq b\circ a, \;\; a,b \in A.
	\end{align*}}
which is called the {\bf adjoint representation} of $(A,\circ)$. Moreover, by \cite[Proposition 3.3]{HBG}, $(A^{\ast},L^{\ast}_{\star},-R^{\ast}_{\circ})$ is also a representation of $(A,\circ)$, where $L^{\ast}_{\star} = L^{\ast}_\circ + R^{\ast}_\circ$.
\end{rmk}

\begin{defi}
	Let $(V_{1},l_{1},r_{1})$ and $(V_{2},l_{2},r_{2})$ be two representations of a Novikov algebra $(A,\circ)$. If a linear map $\varphi: V_1\rightarrow V_2$ satisfies
	\begin{align*}
		\varphi(l_{1}(a)v_{1}) &= l_{2}(a)\varphi(v_{1}),\\
		\varphi(r_{1}(a)v_{1}) &= r_{2}(a)\varphi(v_{1}), \;\; v_{1}\in V_{1},\;\;a\in A,
	\end{align*}
then we call that $\varphi$ is a {\bf homomorphism of representations} from $(V_{1},l_{1},r_{1})$ to $(V_{2},l_{2},r_{2}) $. Moreover, if $\varphi$ is a bijection, then we say that $\varphi$ is an {\bf isomorphism of representations} from $(V_{1},l_{1},r_{1})$  to  $(V_{2},l_{2},r_{2}) $.
\end{defi}

\begin{pro}\label{pro-1}
	Let  $(A,\circ,B( \cdot,\cdot ))$ be a quadratic Novikov algebra. Then the adjoint representation $(A,L_{\circ},R_{\circ})$ of $(A, \circ)$ is isomorphic to the  representation $(A^{\ast},L^{\ast}_{\star},-R^{\ast}_{\circ})$.
\end{pro}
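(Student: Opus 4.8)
The plan is to use the linear map $\varphi\colon A\to A^{\ast}$ canonically determined by the nondegenerate form, namely $\langle\varphi(a),b\rangle=B(a,b)$ for all $a,b\in A$, and to show that this $\varphi$ is an isomorphism of representations. Since $B$ is symmetric the formula is unambiguous, and since $B$ is nondegenerate $\varphi$ is injective; as $A$ is finite-dimensional, a dimension count then gives that $\varphi$ is bijective. Thus it remains only to verify the two intertwining conditions from the definition of a homomorphism of representations, taking $(V_1,l_1,r_1)=(A,L_{\circ},R_{\circ})$ and $(V_2,l_2,r_2)=(A^{\ast},L^{\ast}_{\star},-R^{\ast}_{\circ})$.

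For the first condition $\varphi(L_{\circ}(a)v)=L^{\ast}_{\star}(a)\varphi(v)$, I would evaluate both sides against an arbitrary $b\in A$. The left side gives $B(a\circ v,b)$, while unwinding the definition of the dual map yields $\langle L^{\ast}_{\star}(a)\varphi(v),b\rangle=-\langle\varphi(v),a\star b\rangle=-B(v,a\star b)$. These agree at once by the invariant property \eqref{def-eq1} with $c$ replaced by $b$, so this condition is essentially a restatement of the defining identity of a quadratic Novikov algebra.

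For the second condition $\varphi(R_{\circ}(a)v)=-R^{\ast}_{\circ}(a)\varphi(v)$, testing against $b\in A$ reduces the claim to the identity
\begin{equation*}
B(v\circ a,b)=B(v,b\circ a)\qquad\text{for all }a,b,v\in A,
\end{equation*}
equivalently $B(a\circ b,c)=B(a,c\circ b)$. This is the one step that is not a direct quotation of the hypotheses, and I expect it to be the main obstacle. I would establish it by applying the invariant property twice together with the symmetry of $B$: expanding the left-hand side gives $B(a\circ b,c)=-B(b,a\circ c)-B(b,c\circ a)$, while writing $B(a,c\circ b)=B(c\circ b,a)$ and applying the invariant property again gives $-B(b,c\circ a)-B(b,a\circ c)$; the two expressions coincide.

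Combining the two verified intertwining relations with the bijectivity of $\varphi$ shows that $\varphi$ is an isomorphism of representations from $(A,L_{\circ},R_{\circ})$ to $(A^{\ast},L^{\ast}_{\star},-R^{\ast}_{\circ})$, which completes the proof.
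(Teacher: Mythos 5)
Your proposal is correct and takes essentially the same approach as the paper: both use the map $a\mapsto B(a,\cdot)$, note it is bijective by nondegeneracy, and verify the two intertwining relations by pairing against an arbitrary element, with the first relation being a restatement of the invariant property \eqref{def-eq1}. The only difference is that you explicitly derive the identity $B(v\circ a,b)=B(b\circ a,v)$ (self-adjointness of $R_{\circ}(a)$ with respect to $B$) by two applications of \eqref{def-eq1}, a step the paper's proof uses without comment.
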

\begin{proof}
	Let $\theta: A\rightarrow A^{\ast}$ be the linear map defined by $\theta(a) = B(a,\cdot)$ for all $a\in A$. Since $B( \cdot,\cdot )$ is nondegenerate, $\theta$ is a bijection. For all $a$, $b$, $c\in A$, we have
	\begin{align*}
		\theta(L_{\circ}(a)b)(c) &= \theta(a\circ b)(c) = B(a\circ b,c) = -B(b,a\star c) \\&= -B(b,L_{\star}(a)c) =  -\langle \theta(b), L_{\star}(a)c \rangle \\&=  (L^{\ast}_{\star}(a)\theta(b))(c),
\end{align*}
and
\begin{align*}
		\theta(R_{\circ}(a)b)(c) &= \theta(b\circ a)(c) = B(b\circ a,c) = B(c\circ a,b) \\&= B(b,R_{\circ}(a)c) = \langle \theta(b), R_{\circ}(a)c \rangle \\ &= (-R^{\ast}_{\circ}(a)\theta(b))(c).
	\end{align*}
	Therefore, $\theta$ is an isomorphism of representations from $(A,L_{\circ},R_{\circ})$ to $(A^{\ast},L^{\ast}_{\star},-R^{\ast}_{\circ})$.
\end{proof}

\delete{\begin{defi}
		A {\bf derivation} d of Novikov algebra $(A,\circ)$ is a linear map $d\in End(A)$ satisfying
		\begin{align*}
			d(a\circ b) = d(a)\circ b + a\circ d(b), \;\; \text{for all} \;\; a,b\in A.
		\end{align*}
\end{defi}}

Finally, we give two constructions of quasi-Frobenius Novikov algebras from quadratic Novikov algebras.
\begin{defi}\cite{HBG}
	A {\bf quasi-Frobenius Novikov algebra} $(A,\circ,\omega ( \cdot,\cdot ))$ is a Novikov algebra $(A,\circ)$ with a nondegenerate skewsymmetric bilinear form $\omega ( \cdot,\cdot )$ satisfying
	\begin{eqnarray}\label{eq:qF}
		\omega(a\circ b,c) - \omega(a\star c,b) + \omega(c\circ b,a) = 0,\;\; a,b,c \in A.
	\end{eqnarray}
\end{defi}

\begin{pro}\label{pro-qN-1}
	Let $(A,\circ,B( \cdot,\cdot ))$ be a quadratic Novikov algebra and $D$ be a derivation of $(A, \circ)$ such that $B(D(a),b) = -B(a,D(b))$ for all $a$, $b\in A$. Define $\omega(\cdot,\cdot): A\times A\rightarrow {\bf k}$ by
	\begin{eqnarray}
		\omega (a,b) = B(D(a),b),\;\;a, b\in A.
	\end{eqnarray} Then $\omega(\cdot,\cdot)$ is a skewsymmetric bilinear form satisfying Eq. (\ref{eq:qF}).
\end{pro}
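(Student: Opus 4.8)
The plan is to verify the two assertions separately: that $\omega$ is a skewsymmetric bilinear form, and that it satisfies Eq.~(\ref{eq:qF}).

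The skewsymmetry is immediate. Bilinearity of $\omega$ follows from the bilinearity of $B$ and the linearity of $D$. For the skew part, since $B$ is symmetric and $D$ satisfies $B(D(a),b)=-B(a,D(b))$, I compute $\omega(a,b)=B(D(a),b)=-B(a,D(b))=-B(D(b),a)=-\omega(b,a)$, which is the desired identity.

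For Eq.~(\ref{eq:qF}) the key device is that the hypothesis on $D$ provides two equivalent expressions for $\omega$, namely $\omega(x,y)=B(D(x),y)=-B(x,D(y))$, and I intend to use whichever is convenient term by term. For the two terms built from the product $\circ$, I would use the form $\omega(x,y)=-B(x,D(y))$ so that the derivation lands in the right-hand slot, whence the invariance property Eq.~(\ref{def-eq1}) applies directly to the $\circ$-product. This gives $\omega(a\circ b,c)=-B(a\circ b,D(c))=B(b,a\star D(c))$, and likewise $\omega(c\circ b,a)=-B(c\circ b,D(a))=B(b,c\star D(a))=B(b,D(a)\star c)$, the last equality using the commutativity of $\star$. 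For the middle term I would instead use $\omega(x,y)=B(D(x),y)$ together with the fact that $D$ is a derivation, which yields $D(a\star c)=D(a)\star c+a\star D(c)$ since $\star$ is assembled from $\circ$; hence $\omega(a\star c,b)=B(b,D(a)\star c)+B(b,a\star D(c))$ after applying the symmetry of $B$. Substituting these three evaluations into the left-hand side of Eq.~(\ref{eq:qF}), the four resulting summands cancel in pairs and the identity follows.

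The computation is essentially bookkeeping, so I do not expect a genuine obstacle; the one point that requires care is the choice of representation for $\omega$ in each term. If one uses the single form $\omega(x,y)=B(D(x),y)$ uniformly, then the derivation must be expanded on every product, producing a six-term expression that collapses only after reapplying the antisymmetry $B(D(b),a\star c)=-B(b,D(a\star c))$; the mixed choice above sidesteps this and makes the cancellation transparent. I would record at the outset the two auxiliary facts used repeatedly, namely that $\star$ is commutative and that $D$ acts as a derivation for $\star$ as well as for $\circ$.
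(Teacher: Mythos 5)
Your proposal is correct and follows essentially the same route as the paper: both verify skewsymmetry by the same one-line computation and then evaluate the three terms of Eq.~(\ref{eq:qF}) to the same intermediate identities $\omega(a\circ b,c)=B(b,a\star D(c))$, $\omega(c\circ b,a)=B(b,c\star D(a))$, and $\omega(a\star c,b)=B(D(a)\star c+a\star D(c),b)$, after which the cancellation is immediate. The only difference is cosmetic: you move $D$ across $B$ via skew-adjointness before applying the invariance Eq.~(\ref{def-eq1}), whereas the paper expands the derivation on $D(a\circ b)$ first and then re-collapses using the same skew-adjointness, so your version slightly streamlines the identical computation.
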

\begin{proof}
	Since
	\begin{align*}
		\omega (a,b) = B(D(a),b) = -B(a,D(b)) = -B(D(b),a) = -\omega (b,a),\;\;a, b\in A,
	\end{align*}
	$\omega ( \cdot,\cdot )$ is skewsymmetric.
	
	Let $a$, $b$, $c\in A$. Note that
	\begin{align*}
		\omega (a\circ b,c) =& B(D(a\circ b),c) = B(D(a)\circ b + a\circ D(b),c) \\=& -B(b,D(a)\star c) - B(D(b),a\star c) = -B(b,D(a)\star c) + B(b,D(a\star c)) \\ =& B(b,a\star D(c)),\\
		\omega (c\circ b,a) =& B(D(c\circ b),a) = B(D(c)\circ b + c\circ D(b),a) \\=& -B(b,D(c)\star a) - B(D(b),c\star a) = -B(b,D(c)\star a) + B(b,D(c\star a)) \\ =& B(b,c\star D(a)),\\
		\omega (a\star c,b) =& B(D(a\star c),b) = B(D(a)\star c + a\star D(c),b).
	\end{align*}
	Therefore, Eq. (\ref{eq:qF}) holds.
\end{proof}

\begin{rmk}
	If $D$ is a bijection in Proposition \ref{pro-qN-1}, then $(A,\circ,\omega ( \cdot,\cdot ))$ is a quasi-Frobenius Novikov algebra.
\end{rmk}

\begin{pro}
	Let $(A,\circ,B( \cdot,\cdot ))$ be a quadratic Novikov algebra and $D: A\rightarrow A$ be a linear isomorphism such that $D(a\circ b) = \frac{1}{2}a\circ D(b)$ and $B(D(a),b) = -B(a,D(b))$ for all $a$, $b\in A$. Then $(A,\circ,\omega ( \cdot,\cdot ))$ is a quasi-Frobenius Novikov algebra where $\omega ( \cdot,\cdot )$ is defined by $\omega (a,b) = B(D(a),b)$ for all $a,b \in A$.
\end{pro}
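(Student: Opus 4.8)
The plan is to verify the three defining conditions of a quasi-Frobenius Novikov algebra for $\omega$: that it is skewsymmetric, nondegenerate, and satisfies Eq.~(\ref{eq:qF}). The structure of the argument parallels that of Proposition~\ref{pro-qN-1}, the only change being that the derivation condition there is replaced here by the scaling identity $D(a\circ b)=\frac{1}{2} a\circ D(b)$, which enters the computation in a slightly different way.

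The first two conditions are immediate. Skewsymmetry follows exactly as in Proposition~\ref{pro-qN-1}: using the hypothesis $B(D(a),b)=-B(a,D(b))$ together with the symmetry of $B$, one gets $\omega(a,b)=B(D(a),b)=-B(a,D(b))=-B(D(b),a)=-\omega(b,a)$. For nondegeneracy, suppose $\omega(a,b)=0$ for all $b\in A$; then $B(D(a),b)=0$ for all $b$, so $D(a)=0$ by nondegeneracy of $B$, and hence $a=0$ since $D$ is injective.

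The heart of the proof is Eq.~(\ref{eq:qF}). I would compute the three terms separately. For the two terms involving $\circ$, I apply the scaling identity and then the invariance~(\ref{def-eq1}):
\begin{align*}
\omega(a\circ b,c)&=\tfrac12 B(a\circ D(b),c)=-\tfrac12 B(D(b),a\star c),\\
\omega(c\circ b,a)&=\tfrac12 B(c\circ D(b),a)=-\tfrac12 B(D(b),c\star a)=-\tfrac12 B(D(b),a\star c),
\end{align*}
where the last equality uses that $\star$ is symmetric, so these two contributions add to $-B(D(b),a\star c)$. For the middle term I first expand $a\star c=a\circ c+c\circ a$, apply the scaling identity to each summand to get $D(a\star c)=\tfrac12\big(a\circ D(c)+c\circ D(a)\big)$, and then invoke~(\ref{def-eq1}) to obtain $\omega(a\star c,b)=-\tfrac12\big(B(D(c),a\star b)+B(D(a),c\star b)\big)$.

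Substituting these into the left-hand side of Eq.~(\ref{eq:qF}) reduces the claim to the single identity $B(D(b),a\star c)=\tfrac12\big(B(D(c),a\star b)+B(D(a),c\star b)\big)$. This I would establish by moving $D$ across via $B(D(b),\cdot)=-B(b,D(\cdot))$, expanding $D(a\star c)$ as above, and applying~(\ref{def-eq1}) once more together with the symmetry of $B$. The main point to watch is the consistent bookkeeping of the symmetric product $\star$ and the factors of $\tfrac12$; once the middle term is correctly split into its two pieces, the cancellation is forced, and there is no genuine obstacle beyond this routine verification.
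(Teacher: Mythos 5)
Your proposal is correct and takes essentially the same route as the paper: skewsymmetry and nondegeneracy are handled identically, and Eq.~\eqref{eq:qF} is verified with the same three ingredients (the scaling identity $D(a\circ b)=\tfrac12 a\circ D(b)$, the invariance \eqref{def-eq1}, and the skew-adjointness $B(D(a),b)=-B(a,D(b))$). The only cosmetic difference is that you expand the middle term $\omega(a\star c,b)$ via the scaling identity and then check the residual identity $B(D(b),a\star c)=\tfrac12\bigl(B(D(c),a\star b)+B(D(a),c\star b)\bigr)$ by moving $D$ across, whereas the paper leaves that term packaged as $B(D(a\star c),b)$ and instead rewrites each $\circ$-term as $\tfrac12 B(b,D(a\star c))$, making the cancellation immediate --- the same manipulations in a different order.
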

\begin{proof}
	Since $D$ is a bijection and $B(\cdot,\cdot)$ is nondegenerate, we have that $\omega(\cdot,\cdot)$ is nondegenerate. By Proposition \ref{pro-qN-1},
	$\omega ( \cdot,\cdot )$ is skewsymmetric.
	
	For all $a$, $b$, $c\in A$, we have
	\begin{align*}
		\omega (a\circ b,c) =& B(D(a\circ b),c) = \frac{1}{2}B(a\circ D(b),c) \\=& -\frac{1}{2}B(D(b),a\star c) = \frac{1}{2}B(b,D(a\star c)),\\
		\omega (c\circ b,a) =& B(D(c\circ b),a) = \frac{1}{2}B( c\circ D(b),a) \\=& -\frac{1}{2}B(D(b),c\star a) = \frac{1}{2}B(b,D(c\star a)),\\
		\omega (a\star c,b) =& B(D(a\star c),b).
	\end{align*}
	Then  $\omega(\cdot,\cdot)$ satisfies Eq. (\ref{eq:qF}).
	Therefore, $(A,\circ,\omega ( \cdot,\cdot ))$ is a quasi-Frobenius Novikov algebra.
\end{proof}

\subsection{Structures of quadratic Novikov algebras}
First, we introduce some basic definitions about quadratic Novikov algebras.
\begin{defi}
Let $(A, \circ, B(\cdot,\cdot)$ be a quadratic Novikov algebra.

A {\bf subalgebra} $S$ (resp. an {\bf ideal} $I$) of $(A,\circ,B(\cdot ,\cdot))$ is just the subalgebra (resp. ideal) of $(A,\circ)$.
	
	A subalgebra (resp. an ideal) $S$ of $(A, \circ, B(\cdot,\cdot))$ is called {\bf nondegenerate} if the restriction of $B(\cdot ,\cdot )$ on $S$ is a nondegenerate bilinear form \delete{and $S\neq 0$}.
	
	A subalgebra (resp. an ideal) $S$ of $(A, \circ, B(\cdot,\cdot))$ is called {\bf isotropic} if $B(S,S) = 0$.
\end{defi}

\begin{lem}\label{nonis}
	Let $(A,\circ,B( \cdot,\cdot ))$ be a quadratic Novikov algebra. Then the following conclusions hold.
	
	(1) If $I$ is an ideal of $(A, \circ)$, then $I^{\perp}=\{a\in A\mid B(a, b)=0 \;\text{for all $b\in I$}\}$ is an ideal of $(A, \circ)$.
	
	(2) If $S$ is a nondegenerate subalgebra of $(A,\circ,B( \cdot,\cdot ))$, then $S^{\perp}$ is also a nondegenerate subalgebra of $(A,\circ,B( \cdot,\cdot ))$ and $A = S\oplus S^{\perp}$ as vector spaces.
\end{lem}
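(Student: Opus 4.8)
The engine for both parts is a single consequence of the invariance \eqref{def-eq1}. Since $\star$ is commutative ($a\star b=a\circ b+b\circ a=b\star a$) and $B$ is symmetric, every right multiplication is self-adjoint:
\begin{equation*}
B(a\circ c,b)=-B(c,a\star b)=-B(c,b\star a)=B(b\circ c,a)=B(a,b\circ c)\qquad(a,b,c\in A).
\end{equation*}
The plan is to record this identity first, because it lets me move a product from one argument of $B$ to the other without ever invoking the (less tractable) left multiplication, whose adjoint is $-(L_\circ+R_\circ)$.

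For (1), let $a\in I^{\perp}$, $x\in A$, $b\in I$. To get $A\circ I^{\perp}\subseteq I^{\perp}$, I compute $B(x\circ a,b)=-B(a,x\star b)$; since $I$ is an ideal, $x\circ b,b\circ x\in I$, so $x\star b\in I$ and the pairing vanishes because $a\in I^{\perp}$. To get $I^{\perp}\circ A\subseteq I^{\perp}$ I use the self-adjointness above: $B(a\circ x,b)=B(a,b\circ x)$, and $b\circ x\in I$ forces this to be $0$. Thus $I^{\perp}$ is closed on both sides, hence an ideal.

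For (2), the vector-space statement is routine linear algebra: nondegeneracy of $B$ on $A$ gives $\dim S+\dim S^{\perp}=\dim A$, while nondegeneracy of $B|_{S}$ gives $S\cap S^{\perp}=0$, so $A=S\oplus S^{\perp}$ and $(S^{\perp})^{\perp}=S$; moreover any $v\in S^{\perp}$ orthogonal to $S^{\perp}$ is then orthogonal to $S\oplus S^{\perp}=A$, so $v=0$ and $B|_{S^{\perp}}$ is nondegenerate. Two of the three algebra inclusions for $S^{\perp}$ also come straight from the engine: for $s,s'\in S$, $u\in S^{\perp}$ one has $B(u\circ s,s')=B(u,s'\circ s)=0$ (as $s'\circ s\in S$) and $B(s\circ u,s')=-B(u,s\star s')=0$ (as $s\star s'\in S$), giving $S^{\perp}\circ S\subseteq S^{\perp}$ and $S\circ S^{\perp}\subseteq S^{\perp}$.

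The remaining inclusion $S^{\perp}\circ S^{\perp}\subseteq S^{\perp}$ is where I expect the genuine difficulty. Writing $u\circ v=\mu(u,v)+w$ with $\mu(u,v)\in S$ and $w\in S^{\perp}$, I must show $\mu(u,v)=0$, equivalently $B(u\circ v,s)=0$ for all $s\in S$. The invariance and self-adjointness alone only rewrite this as a pairing \emph{internal} to $S^{\perp}$ (for instance $B(u\circ v,s)=B(u,s\circ v)$ with $s\circ v\in S^{\perp}$), which does not obviously vanish, so the Novikov axioms must be used in an essential way. My plan is to exploit right-commutativity: pairing $(a\circ b)\circ c=(a\circ c)\circ b$ with a fourth element $d$ and applying self-adjointness of right multiplications twice yields
\begin{equation*}
B(a\circ b,d\circ c)=B(a\circ c,d\circ b)\qquad(a,b,c,d\in A),
\end{equation*}
and I would feed elements of $S^{\perp}$ and $S$ into this identity (together with the companion consequence of the left-symmetry axiom and the nondegeneracy of $B|_{S}$) to annihilate the $S$-component $\mu(u,v)$. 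Pinning down this last step — verifying that the Novikov identities really do kill the $S$-component of $u\circ v$, rather than merely relating it to other $S^{\perp}$-pairings — is the main obstacle and the part I would expect to demand the most care.
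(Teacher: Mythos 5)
Your part (1) is correct and uses the same ingredients as the paper: the paper obtains $A\circ I^{\perp}\subseteq I^{\perp}$ directly from Eq.~\eqref{def-eq1}, exactly as you do, and then gets $I^{\perp}\circ A\subseteq I^{\perp}$ from the expansion $B(a\circ c,b)=-B(c,a\circ b)-B(c,b\circ a)$, which is your self-adjointness identity $B(a\circ c,b)=B(b\circ c,a)$ unwound in two steps; your packaging is marginally cleaner but not a different proof. Likewise, everything you actually establish in part (2) --- $S\cap S^{\perp}=0$, $\dim S+\dim S^{\perp}=\dim A$ and hence $A=S\oplus S^{\perp}$, and the nondegeneracy of $B|_{S^{\perp}\times S^{\perp}}$ --- is correct; your dimension count replaces the paper's explicit linear-system argument for $A\subseteq S\oplus S^{\perp}$, and your two mixed inclusions $S\circ S^{\perp}\subseteq S^{\perp}$, $S^{\perp}\circ S\subseteq S^{\perp}$ are valid and go beyond what the paper writes down.

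The step you flag as the main obstacle, $S^{\perp}\circ S^{\perp}\subseteq S^{\perp}$, is indeed a gap in your proof, but it is one you could never have closed: for a mere subalgebra $S$ (as opposed to an ideal) the claim is false. Take Type (C5)($l=-2$) from the paper's own Table 2, i.e.
\begin{equation*}
e_{1}\circ e_{3}=e_{1},\quad e_{3}\circ e_{1}=-2e_{1},\quad e_{3}\circ e_{3}=e_{3},\quad B(e_{1},e_{3})=k,\quad B(e_{2},e_{2})=s,\quad k^{2}s\neq 0,
\end{equation*}
with all other products and pairings zero. Then $S=\mathbb{C}(e_{1}+e_{2})$ is a nondegenerate (trivial) subalgebra, $S^{\perp}=\mathrm{span}\{e_{1},\,ke_{2}-se_{3}\}$, and $(ke_{2}-se_{3})\circ(ke_{2}-se_{3})=s^{2}e_{3}\notin S^{\perp}$. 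So your derived identity $B(a\circ b,d\circ c)=B(a\circ c,d\circ b)$, while valid, cannot possibly yield the closure, and the planned final step of your proposal would necessarily fail. Notably, the paper's own proof of (2) is silent on exactly this point: it proves only the direct-sum decomposition and the nondegeneracy of $B|_{S^{\perp}\times S^{\perp}}$ and never verifies that $S^{\perp}$ is closed under $\circ$, so the defect lies in the statement of the lemma rather than in your argument alone. The lemma is repaired, and everything the paper later needs survives, by weakening (2): for an arbitrary subspace $S$ with $B|_{S\times S}$ nondegenerate one gets $A=S\oplus S^{\perp}$ and nondegeneracy of $B|_{S^{\perp}\times S^{\perp}}$ (this subspace form is what Lemma~\ref{lem-1} and Proposition~\ref{iso} actually invoke, with $S=J\oplus V$ not a subalgebra), while for $S$ a nondegenerate \emph{ideal} part (1) supplies the multiplicative closure of $S^{\perp}$ --- indeed as an ideal --- which is the only case used in the decomposition theorem.
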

\begin{proof}
	(1) Let $a \in A, b \in I^{\perp}$ and $c \in I$. Since $c \in I$, we have $a\star c = a \circ c+ c\circ a \in I$. Therefore, according to that $b \in I^{\perp}$, we have $B(a\circ b,c) = -B(b,a\star c) = 0$, which means that $a\circ b \in I^{\perp}$. Note that $B(a\circ c,b) = -B(c,a\circ b) - B(c,b\circ a)$. Since $a\circ c \in I$ and $a\circ b \in I^{\perp}$, we obtain $B(c,b \circ a) = 0$. Then we have $b \circ a \in I^{\perp}$. Therefore, $I^{\perp}$ is an ideal of $(A, \circ)$.
	
	(2) For any $a \in S\cap S^{\perp}$, we have $B(a,S) = 0$. Since $B(\cdot,\cdot)|_{S\times S}$ is nondegenerate, we get $a=0$. Therefore, the sum $S+S^{\perp}$ is the direct sum as vector spaces.
	
	Since $S\subset A$ and $S^{\perp}\subset A$, we have $S \oplus S^{\perp} \subset A$. Next we only prove that $A\subset S\oplus S^{\perp}$.
	
	If $S = A$, we have $A\subset S$. If $S = 0$, we get $B(S,A) = 0$, that is $A \subset S^{\perp}$. Therefore, in both two cases, $A \subset S\oplus S^{\perp}$.

	Consider the case that $0 \hspace{1pt} < \hspace{1pt} \text{dim}S= m < \hspace{2pt} \text{dim}A$. Let $\{x_{1},...,x_{m}\}$ be a basis of $S$. Suppose that $y \in A$, but $y \notin S$. Consider the system of equations with variables $\lambda_1,..., \lambda_{m+1}$ as follows.
	\begin{eqnarray}
		\sum_{j=1}^{m}B(x_{i},x_{j})\lambda_{j} + B(y,x_{i})\lambda_{m+1} = 0,\;\;\;i=1,\cdots, m.
	\end{eqnarray}
	Since this system of equations has $m+1$ variables and $m$ equations, there exists a nonzero solution $(\alpha_{1},\cdots,\alpha_{m+1})$, where $\alpha_i\in {\bf k}$.
	Let $z= \sum_{j=1}^{m}\alpha_{j}x_{j} + \alpha_{m+1}y\in A$. Then we have  $B(z,x_{i}) = 0$ for $i=1,..., m$. Thus we have $z \in S^{\perp}$. If $\alpha_{m+1} = 0$, we obtain $z \in S \cap S^{\perp}$. Thus, $z =0$. Since $\{x_{1},...,x_{m}\}$ is a basis of $S$, we have $\alpha_{1},...,\alpha_{m}$ are zero which contradicts to that $(\alpha_{1},...,\alpha_{m})$ is nonzero. Thus $\alpha_{m+1} \neq 0$. Then we get $y = \frac{1}{\alpha_{m+1}}z - \frac{1}{\alpha_{m+1}}\sum_{j=1}^{m}\alpha_{j}x_{j} \in S\oplus S^{\perp}$. Thus $A\subset S \oplus S^{\perp}$. Consequently, we have $A$ = $S$ $\oplus$ $S^{\perp}$.
	
	Suppose that $S^{\perp}$ is degenerate. Then there exists a nonzero element $x \in S^{\perp}$ such that $B(x,S^{\perp}) = 0$. Since $B(x,S) = 0$, we have $B(x,A) = 0$. By the nondegenerate property of $B(\cdot,\cdot)$, we obtain $x = 0$, which contradicts with $x\neq 0$. Therefore, $S^{\perp}$ is a nondegenerate subalgebra.
\end{proof}

\begin{rmk}
 By Lemma \ref{nonis}, if $I$ is a nondegenerate ideal of $(A,\circ,B( \cdot,\cdot ))$, then $I^{\perp}$ is also a nondegenerate ideal of $(A,\circ,B( \cdot,\cdot ))$ and $A = I\oplus I^{\perp}$ as vector spaces.
\end{rmk}

Finally, we present a decomposition theorem of quadratic Novikov algebras.
\begin{thm}
	Let $(A,\circ,B(\cdot,\cdot))$ be a quadratic Novikov algebra. Then there exists a positive integer $r$ such that $A = \oplus_{i = 1}^r A_{i}$, where
	
	(1) $A_{i}$ is a nondegenerate ideal of $(A,\circ,B(\cdot,\cdot))$ for each $i$;
	
	(2) $A_{i}$ contains no nontrivial nondegenerate ideal of $(A,\circ,B(\cdot,\cdot))$ for each $i$;
	
	(3) for all $i\neq j$, $A_{i}$ and $A_{j}$ are orthogonal.
\end{thm}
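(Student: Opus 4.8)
The plan is to argue by induction on $\dim A$, using as the single structural input Lemma~\ref{nonis}: whenever $I$ is a nondegenerate ideal of $(A,\circ,B(\cdot,\cdot))$, its orthogonal complement $I^{\perp}$ is again a nondegenerate ideal and $A=I\oplus I^{\perp}$ as vector spaces.

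Before running the induction I would record one elementary fact about orthogonal direct sums of ideals. If $A=I\oplus I^{\perp}$ with both $I$ and $I^{\perp}$ ideals, then $I\circ I^{\perp}\subseteq I\cap I^{\perp}=0$ and likewise $I^{\perp}\circ I=0$, so the two summands annihilate each other. Consequently, if $J$ is any ideal of the Novikov algebra $I$, then $A\circ J=I\circ J+I^{\perp}\circ J\subseteq J$ and $J\circ A=J\circ I+J\circ I^{\perp}\subseteq J$, so $J$ is automatically an ideal of $A$. Moreover the restriction of $B$ to $J$ is unchanged, so $J$ is nondegenerate as an ideal of $A$ exactly when it is nondegenerate as an ideal of $I$, and $B(J,I^{\perp})=0$. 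This transitivity of ideals through orthogonal summands is what keeps the recursive decomposition of a summand compatible with the ambient $A$.

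For the induction, if $A$ has no nondegenerate ideal $I$ with $0\neq I\neq A$, take $r=1$ and $A_1=A$: condition (1) holds because $B$ is nondegenerate on $A$, (2) is the hypothesis of this case, and (3) is vacuous. Otherwise choose a nondegenerate ideal $I$ with $0\neq I\neq A$. By Lemma~\ref{nonis}, $A=I\oplus I^{\perp}$ with $I,I^{\perp}$ nondegenerate ideals of strictly smaller dimension and $B(I,I^{\perp})=0$. Applying the induction hypothesis to the quadratic Novikov algebras $(I,\circ,B|_{I})$ and $(I^{\perp},\circ,B|_{I^{\perp}})$ yields orthogonal decompositions $I=\bigoplus_{s}I_{s}$ and $I^{\perp}=\bigoplus_{t}J_{t}$ into nondegenerate ideals, each containing no nontrivial nondegenerate subideal. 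By the annihilation fact above each $I_{s}$ and each $J_{t}$ is a nondegenerate ideal of $A$; the family is pairwise orthogonal (within $I$ and within $I^{\perp}$ by induction, across the two by $B(I,I^{\perp})=0$); and $A=\bigoplus_{s}I_{s}\oplus\bigoplus_{t}J_{t}$ is the required decomposition, establishing (1) and (3).

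Finally I would verify (2) for the terminal pieces. Each $A_{i}$ is a nondegenerate ideal of $A$, so by Lemma~\ref{nonis} it is an orthogonal direct summand with $A=A_{i}\oplus A_{i}^{\perp}$ and mutual annihilation, and the transitivity fact applies with $I=A_{i}$. Hence a nondegenerate ideal $J$ of $A$ with $0\neq J\subsetneq A_{i}$ restricts to a nontrivial nondegenerate ideal of $(A_{i},\circ,B|_{A_{i}})$, contradicting indecomposability, while conversely a nontrivial nondegenerate ideal of $A_{i}$ is, by transitivity, a nontrivial nondegenerate ideal of $A$ sitting inside $A_{i}$; so the two notions coincide and (2) holds. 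The one point that genuinely needs care is the mutual annihilation $I\circ I^{\perp}=I^{\perp}\circ I=0$ and the resulting transitivity of ideals through orthogonal summands; once that is established, the induction is entirely routine.
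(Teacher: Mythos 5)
Your proof is correct and follows essentially the same route as the paper: both rest on Lemma~\ref{nonis} together with the mutual annihilation $I\circ I^{\perp}=I^{\perp}\circ I=0$, your induction on $\dim A$ being a cleaner packaging of the paper's iterated refinement ``by finite steps''. Your explicit transitivity lemma (any ideal of a nondegenerate orthogonal summand is automatically an ideal of $A$) abstracts the computation the paper performs ad hoc for $J^{\perp}$, and has the added virtue of justifying why condition (2) may be read relative to $(A,\circ,B(\cdot,\cdot))$ rather than relative to the summand in which a piece was produced --- a point the paper leaves implicit.
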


\begin{proof}
	If $(A,\circ,B(\cdot,\cdot))$ contains no nontrivial nondegenerate ideal, then this conclusion naturally holds.
	
	Suppose that $(A,\circ,B(\cdot,\cdot))$ contains nontrivial nondegenerate ideals. Then by Lemma \ref{nonis}, there exists a nontrivial nondegenerate ideal $I$ of $(A,\circ,B(\cdot,\cdot))$  such that $A = I\oplus I^{\perp}$, where $I^{\perp}$ is also a nontrivial nondegenerate ideal of  $(A,\circ,B(\cdot,\cdot))$. If both $I$ and $I^{\perp}$ contain no nontrivial nondegenerate ideal of $(A,\circ,B(\cdot,\cdot))$, then this conclusion holds. Then we consider the case that either $I$ or $I^{\perp}$ contains nontrivial nondegenerate ideals of $(A,\circ,B(\cdot,\cdot))$. Assume that $J$ is a nontrivial nondegenerate ideal of $(A,\circ,B(\cdot,\cdot))$ in $I$. Then $J$ is a nontrivial nondegenerate ideal of $(I, \circ, B(\cdot,\cdot)|_{I\times I})$. Since $B(\cdot,\cdot)|_{I\times I}$ is nondegenerate, by Lemma \ref{nonis},  we have $I=J\oplus J^{\perp}$, where $J^{\perp}$ is also a nondegenerate ideal of  $(I, \circ, B(\cdot,\cdot)|_{I\times I})$. Then we have $A = J\oplus J^{\perp}\oplus I^{\perp}$.
	By Lemma \ref{nonis}, we have $J\circ J^{\perp}$, $J^{\perp}\circ J \subset J\cap J^{\perp} = 0$, and $I\circ I^{\perp}$, $I^{\perp}\circ I \subset I\cap I^{\perp} = 0$. Thus, we obtain $J\circ J^{\perp}=0$, $J^{\perp}\circ J=0$, $I\circ I^{\perp}=0$ and $I^{\perp}\circ I=0$. Therefore, $J^{\perp}\circ A=J^{\perp}\circ ( J\oplus J^{\perp}\oplus I^{\perp})=J^{\perp}\circ J^{\perp}\subset J^{\perp}$. Similarly, we get $A\circ J^{\perp}\subset J^{\perp}$. Thus, $J^{\perp}$ is an ideal of $(A, \circ)$, i.e. $J^{\perp}$ is a nondegenerate ideal of  $(A, \circ, B(\cdot,\cdot))$.
	
	Because $A$ is finite-dimensional, by finite steps, we will end up with a decomposition of $A$ satisfying (1)-(3).
	This completes the proof.
\end{proof}
	
\section{Classification of low dimensional quadratic Novikov algebras over $\mathbb{C}$}
In this section, let ${\bf k}$ be $\mathbb{C}$. We will present a classification of quadratic Novikov algebras of dimensions $2$ and $3$  over $\mathbb{C}$ up to isomorphism.

Let $\{e_1,\ldots, e_n\}$ be a basis of a Novikov algebra $(A, \circ)$. Set $e_i\circ e_j=\sum_{k=1}^nc_{ij}^ke_k$ for each $i$, $j\in \{1, \ldots, n\}$. Then $(A, \circ)$ is determined by the {\bf characteristic matrix}
\begin{eqnarray}
\mathcal{A}=\left(
               \begin{array}{ccc}
                 \sum_{k=1}^n c_{11}^k e_k & \cdots & \sum_{k=1}^n c_{1n}^k \\
                 \cdots & \cdots & \cdots \\
                  \sum_{k=1}^n c_{n1}^k e_k & \cdots &  \sum_{k=1}^n c_{nn}^k e_k \\
               \end{array}
             \right).
\end{eqnarray}
According to Remark \ref{metric-M}, a quadratic Novikov algebra $(A, \circ, B(\cdot, \cdot))$ is determined by the characteristic matrix of $(A, \circ)$ and the metric matrix $B=(b_{ij})$, where $b_{ij} = B(e_{i},e_{j})$.

Next, we introduce the definition of isomorphisms of quadratic Novikov algebras.
\begin{defi}
	Let $(A_1,\circ,B_1(\cdot,\cdot))$ and $(A_2,\circ,B_2(\cdot,\cdot))$ be two quadratic Novikov algebras. If a linear map $\varphi: A_1\rightarrow A_2$ satisfies
	\begin{align*}
		\varphi(a\circ b) &= \varphi(a)\circ \varphi(b),\\
		B_1(a,b) &= B_2(\varphi(a),\varphi(b)), \;\; a,b \in A_1,
	\end{align*}
then we call that $\varphi$ is a {\bf homomorphism} from $(A_1,\circ,B_1(\cdot,\cdot))$ to $(B_2,\circ, B_2(\cdot,\cdot))$.
	Moreover, $\varphi$ is called {\bf an isomorphism of quadratic Novikov algebras} if $\varphi$ is a bijection.
\end{defi}

\begin{rmk}\label{rmk-class}
Let $(A_1, \circ, B_1(\cdot,\cdot))$ be a quadratic Novikov algebra and $(A_2, \circ)$ be a Novikov algebra. If $\varphi: A_1\rightarrow A_2$ is an isomorphism of Novikov algebras, then $(A_2, \circ, B_1(\cdot, \cdot)(\varphi^{-1}\times \varphi^{-1}))$ is a quadratic Novikov algebra which is isomorphic to $(A_1, \circ, B_1(\cdot,\cdot))$. Based on this, there are two steps to classify quadratic Novikov algebras of dimension $n$ as follows.
\begin{enumerate}
\item\label{itm1} Classify Novikov algebras of dimension $n$ up to isomorphism.
\item Characterize all symmetric invariant nondegenerate bilinear forms $B(\cdot,\cdot)$ on the obtained Novikov algebras in Step (\ref{itm1}).
\end{enumerate}
\end{rmk}

\begin{lem}
	\label{not1}
	There does not exist any quadratic Novikov algebra structure on a Novikov algebra with an identity.
\end{lem}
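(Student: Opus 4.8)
The plan is to obtain an immediate contradiction by feeding the identity element into the invariant property. Suppose, toward a contradiction, that $(A,\circ,B(\cdot,\cdot))$ were a quadratic Novikov algebra whose underlying Novikov algebra $(A,\circ)$ has a (two-sided) identity $e$, so that $e\circ a=a\circ e=a$ for all $a\in A$. The single computation that does all the work is $e\star c=e\circ c+c\circ e=2c$ for every $c\in A$, which uses the identity on both the left and the right.

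With this in hand, I would set $a=e$ in Eq.~(\ref{def-eq1}). The left-hand side collapses to $B(e\circ b,c)=B(b,c)$, while the right-hand side becomes $-B(b,e\star c)=-2B(b,c)$. Equating the two gives $B(b,c)=-2B(b,c)$, hence $3B(b,c)=0$ for all $b,c\in A$. Since ${\bf k}$ has characteristic zero, this forces $B(b,c)=0$ for all $b,c$, i.e.\ $B\equiv 0$. But a nondegenerate bilinear form cannot be identically zero (as $A\neq 0$), so no such $B$ can exist, which is the desired contradiction.

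There is essentially no hard step here: the only point requiring care is that the identity be two-sided, since that is exactly what yields $e\star c=2c$ and hence the offending factor of $3$. I would also remark explicitly that the characteristic-zero hypothesis on ${\bf k}$ is what makes the argument run — over a field of characteristic $3$ the relation $3B=0$ would impose no constraint, so this is where that standing assumption is genuinely used.
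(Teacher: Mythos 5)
Your proof is correct and uses essentially the same idea as the paper: plug the identity into the invariance property \eqref{def-eq1} so that $e\star c=2c$ produces the relation $3B=0$, which characteristic zero turns into a contradiction with nondegeneracy. The only cosmetic difference is that the paper specializes $a=b=e$ to conclude $B(e,\cdot)=0$ (degeneracy), whereas you specialize only $a=e$ and obtain the marginally stronger conclusion $B\equiv 0$ — the mechanism is identical.
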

\begin{proof}
	Let $(A, \circ)$ be a Novikov algebra with an identity $e$ and  $B(\cdot,\cdot)$ be an invariant bilinear form on $A$. For all $a\in A$, we have
\begin{eqnarray*}
B(e, a)=B(e\circ e, a)=-B(e, e\star a)=-2B(e, a).
\end{eqnarray*}
Therefore, we get $B(e, a)=0$ for all $a\in A$, i.e. $B(\cdot,\cdot)$ is degenerate. Thus this lemma holds.
\end{proof}
\begin{rmk}
By Lemma \ref{not1}, there are no nontrivial quadratic Novikov algebras of dimension $1$.
\end{rmk}

Next, we will classify quadratic Novikov algebras of dimensions $2$ and $3$ over $\mathbb{C}$ up to isomorphism. Note that
Novikov algebras of dimensions $2$ and $3$  over $\mathbb{C}$ up to isomorphism have been classified in \cite{BM1}.
By Remark \ref{rmk-class}, for classifying quadratic Novikov algebras of dimensions $2$ and $3$ up to isomorphism, we only need to consider those symmetric invariant nondegenerate bilinear forms on those Novikov algebras classified in \cite{BM1}.

Note that by the result in \cite{BM1}, the classification of Novikov algebras of dimension $2$ up to isomorphism is given by the following table.
	\begin{table}[H]
		\centering
		\caption{Classification of $2$-dimensional  Novikov algebras up to isomorphism}
		\begin{tabular}{c|c|c|c}
			Type & characteristic matrix & Type & characteristic matrix\\ \hline
			(T1) & $\begin{pmatrix}
				0 & 0\\
				0 & 0
			\end{pmatrix}$ &
			(T2) & $\begin{pmatrix}
				e_{2} & 0\\
				0 & 0
			\end{pmatrix}$\\ \hline
			(T3) & $\begin{pmatrix}
				0 & 0\\
				-e_{1} & 0
			\end{pmatrix}$ &
			(N1) & $\begin{pmatrix}
				e_{1} & 0\\
				0 & e_{2}
			\end{pmatrix}$\\ \hline
			(N2) & $\begin{pmatrix}
				e_{1} & 0\\
				0 & 0
			\end{pmatrix}$ &
			(N3) & $\begin{pmatrix}
				e_{1} & e_{2}\\
				e_{2} & 0
			\end{pmatrix}$\\ \hline
			(N4) & $\begin{pmatrix}
				0 & e_{1}\\
				0 & e_{2}
			\end{pmatrix}$ &
			(N5) & $\begin{pmatrix}
				0 & e_{1}\\
				0 & e_{2} + e_{1}
			\end{pmatrix}$\\ \hline
			(N6) & $\begin{pmatrix}
				0 & e_{1}\\
				le_{1} & e_{2}
			\end{pmatrix}$,  $l \neq 0,1$ 		
		\end{tabular}
	\end{table}
\begin{lem}
	\label{not2}
	Let $(A,\circ)$ be a nontrivial $2$-dimensional Novikov algebra with a nonzero element $e_{1}$ such that $a\circ e_{1} = 0$ for all $a\in A$. Then there does not exist any quadratic Novikov algebra structure on $(A,\circ)$.
\end{lem}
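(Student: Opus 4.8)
The plan is to argue by contradiction and reduce everything to a $2\times 2$ linear-algebra computation. First I would complete $e_1$ to a basis $\{e_1,e_2\}$ of $A$. The hypothesis $a\circ e_1=0$ for all $a$ specializes to $e_1\circ e_1=e_2\circ e_1=0$, so the whole multiplication is encoded in
\[
e_1\circ e_2=\alpha e_1+\beta e_2,\qquad e_2\circ e_2=\gamma e_1+\delta e_2,
\]
for scalars $\alpha,\beta,\gamma,\delta\in{\bf k}$, and nontriviality of $(A,\circ)$ means $(\alpha,\beta,\gamma,\delta)\neq(0,0,0,0)$. In particular $A\circ A=\mathrm{span}\{e_1\circ e_2,\,e_2\circ e_2\}$, and the relevant $\star$-products simplify because $e_2\circ e_1=0$: namely $e_1\star e_1=0$, $\;e_1\star e_2=e_2\star e_1=e_1\circ e_2$, and $e_2\star e_2=2(e_2\circ e_2)$.

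Next, suppose toward a contradiction that some $B(\cdot,\cdot)$ makes $(A,\circ,B)$ a quadratic Novikov algebra, and abbreviate $p=B(e_1,e_1)$, $q=B(e_1,e_2)=B(e_2,e_1)$, $s=B(e_2,e_2)$; nondegeneracy of $B$ is exactly $ps-q^2\neq 0$. I would then feed the invariance identity $B(a\circ b,c)=-B(b,a\star c)$ the four triples $(e_1,e_2,e_1)$, $(e_1,e_2,e_2)$, $(e_2,e_2,e_1)$, $(e_2,e_2,e_2)$. The triple $(e_1,e_2,e_1)$ gives $B(e_1\circ e_2,e_1)=-B(e_2,e_1\star e_1)=0$ directly. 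The triple $(e_1,e_2,e_2)$ gives $B(e_1\circ e_2,e_2)=-B(e_2,e_1\circ e_2)=-B(e_1\circ e_2,e_2)$ by symmetry of $B$, forcing $B(e_1\circ e_2,e_2)=0$ since $\mathrm{char}\,{\bf k}=0$. The triple $(e_2,e_2,e_1)$ then yields $B(e_2\circ e_2,e_1)=-B(e_2,e_1\circ e_2)=0$ from the previous step, and the triple $(e_2,e_2,e_2)$ gives $B(e_2\circ e_2,e_2)=-2B(e_2,e_2\circ e_2)$, again forcing $B(e_2\circ e_2,e_2)=0$ in characteristic zero.

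Taken together, these four equations say precisely that the functionals $B(e_1\circ e_2,\cdot)$ and $B(e_2\circ e_2,\cdot)$ vanish on the basis $\{e_1,e_2\}$, hence on all of $A$; equivalently, writing $M=\begin{pmatrix}\alpha&\beta\\ \gamma&\delta\end{pmatrix}$ and the metric matrix $G=\begin{pmatrix}p&q\\ q&s\end{pmatrix}$, the four scalars are exactly the entries of $MG$, so $MG=0$. Since $\det G=ps-q^2\neq0$, the matrix $G$ is invertible and therefore $M=0$, i.e. $\alpha=\beta=\gamma=\delta=0$. This means $A\circ A\subseteq A^{\perp}=0$, so $(A,\circ)$ is trivial, contradicting the hypothesis that $A$ is nontrivial.

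I do not anticipate a real obstacle: the argument is a short contradiction once the structure constants are fixed. The only points needing care are the correct computation of the $\star$-products (it is the vanishing of $e_2\circ e_1$ that makes $e_1\star e_2=e_1\circ e_2$) and the use of $\mathrm{char}\,{\bf k}=0$ to divide out the factors that appear in the triples $(e_1,e_2,e_2)$ and $(e_2,e_2,e_2)$; everything else is routine bookkeeping, and the conclusion $A\circ A\perp A$ combined with nondegeneracy is what clinches the contradiction.
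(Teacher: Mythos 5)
Your proof is correct and follows essentially the same route as the paper: both evaluate the invariance identity $B(a\circ b,c)=-B(b,a\star c)$ on the same basis triples, deduce that $B(e_1\circ e_2,\cdot)$ and $B(e_2\circ e_2,\cdot)$ vanish on all of $A$, and invoke nondegeneracy to force $(A,\circ)$ to be trivial, contradicting the hypothesis. Your only departure is cosmetic --- you package the four vanishing pairings as the single matrix equation $MG=0$ with $G$ invertible, whereas the paper first concludes $e_1\circ e_2=0$ from nondegeneracy and then handles $e_2\circ e_2$ (citing its Remark~\ref{rmk-1} for $B(e_2\circ e_2,e_2)=0$, which is the same characteristic-zero computation you carry out directly).
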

\begin{proof}
	Suppose that $(A, \circ, B(\cdot,\cdot))$ is a quadratic Novikov algebra whose basis is $\{e_{1},e_{2}\}$. Then we have
	\begin{eqnarray*}
		&&B(e_{1}\circ e_{2},e_{1}) =-B(e_{2},e_{1}\star e_{1}) = 0,\\
		&&B(e_{1}\circ e_{2},e_{2}) =-B(e_{2},e_{1}\star e_{2}) =-B(e_{2},e_{1}\circ e_{2}).
	\end{eqnarray*}
Therefore, $B(e_1\circ e_2, e_1)=B(e_1\circ e_2, e_2)=0$. Since $B(\cdot,\cdot)$ is nondegenerate, we get $e_1\circ e_2=0$.
Thus we obtain
	\begin{eqnarray*}
		&&B(e_{2}\circ e_{2},e_{1}) =-B(e_{2},e_{2}\star e_{1}) = 0.
\end{eqnarray*}
By Remark \ref{rmk-1}, one gets $B(e_{2}\circ e_{2},e_{2}) = 0$.
By the nondegenerate property of $B(\cdot,\cdot)$, we get $e_{2}\circ e_{2} = 0$. Thus $(A,\circ)$ is a trivial Novikov algebra. Therefore, this conclusion holds.
\end{proof}

\begin{thm}
	\label{2dim}	
	Any $2$-dimensional nontrivial quadratic Novikov algebra is isomorphic to the quadratic Novikov algebra $(A=\mathbb{C}e_1\oplus \mathbb{C}e_2,\circ, B(\cdot,\cdot))$ which is defined by
\begin{eqnarray}
&&e_1\circ e_1=0,\;\;e_1\circ e_2=e_1,\;\;e_2\circ e_1=-2e_1,\;\;e_2\circ e_2=e_2,\\
&&B(e_1,e_1)=k,\;\;B(e_1, e_2)=B(e_2, e_1)=s,\;\;B(e_2,e_2)=0,\label{dim-2}
\end{eqnarray}
for some $k\in \mathbb{C}$ and $s\in \mathbb{C}\backslash \{0\}$.
\end{thm}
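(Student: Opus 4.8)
The plan is to follow the two-step strategy of Remark~\ref{rmk-class}: run through the classification of $2$-dimensional Novikov algebras recorded in the table above and, for each type, decide whether it supports a symmetric invariant nondegenerate bilinear form. Since we seek nontrivial algebras, type (T1) is discarded immediately, and the two structural lemmas dispose of almost everything else at no computational cost. Types (N1) and (N3) carry an identity ($e_1+e_2$ and $e_1$, respectively), so Lemma~\ref{not1} shows neither admits a quadratic structure. Types (T2), (T3), (N2) each contain the nonzero right-annihilated element $e_2$ (that is, $a\circ e_2=0$ for all $a$), while (N4) and (N5) contain the nonzero right-annihilated element $e_1$; hence, after relabelling the basis if necessary, Lemma~\ref{not2} rules all five out.

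This leaves only type (N6), with product $e_1\circ e_1=0$, $e_1\circ e_2=e_1$, $e_2\circ e_1=le_1$, $e_2\circ e_2=e_2$ and $l\neq 0,1$. The first thing I would verify is that neither lemma applies here, so that a hands-on analysis is genuinely forced. Writing $x=pe_1+qe_2$, the conditions $e_1\circ x=qe_1=0$ and $e_2\circ x=pl\,e_1+qe_2=0$ give $q=0$ and then $p=0$ (using $l\neq 0$), so (N6) has no nonzero right-annihilated element; and a short computation shows an identity would force $l=1$, which is excluded. Thus (N6) escapes both lemmas and must be treated directly.

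For (N6) I would introduce the metric matrix $B=(b_{ij})$ with $b_{ij}=B(e_i,e_j)$ and $b_{12}=b_{21}$, and impose the invariance identity~\eqref{def-eq1}, $B(a\circ b,c)=-B(b,a\star c)$, on all triples of basis vectors, using $e_1\star e_1=0$, $e_1\star e_2=(1+l)e_1$ and $e_2\star e_2=2e_2$. This yields a homogeneous linear system in the $b_{ij}$, among whose consequences the decisive ones are $B(e_2,e_2)=0$ and $(l+2)\,B(e_1,e_2)=0$. Because $B$ is required to be nondegenerate, $\det(b_{ij})\neq 0$ forbids $B(e_1,e_2)=0$; hence necessarily $l=-2$. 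Substituting $l=-2$ reproduces exactly the product written in the statement, and the admissible metrics are precisely those displayed, with $B(e_1,e_2)=s\neq 0$.

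The routine elimination via Lemmas~\ref{not1} and~\ref{not2} is straightforward; the step demanding the most care is completeness of the case analysis. One must check that every type other than (N6) really does fall under one of the two lemmas (i.e.\ carries an identity or a nonzero right-annihilated element), and conversely that (N6) falls under neither, so that the direct computation with~\eqref{def-eq1} is unavoidable. That final linear system, combined with nondegeneracy through the relation $(l+2)B(e_1,e_2)=0$, is exactly where the specific value $l=-2$ and the shape of $B$ are pinned down, completing the argument.
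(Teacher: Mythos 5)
Your route is essentially the paper's: discard (T1), eliminate (N3) by Lemma~\ref{not1}, eliminate (T2), (T3), (N2), (N4), (N5) by Lemma~\ref{not2}, and reduce everything to (N6), where the invariance identity \eqref{def-eq1} on basis triples pins down $l=-2$ and the form. The only structural difference is how (N1) and the restriction $l=-2$ are handled: the paper treats them uniformly by one idempotent computation (writing $e_2\circ e_1=pe_1+qe_2$ for an idempotent $e_2$, using $B(e_2,e_2)=0$ from Remark~\ref{rmk-1} to get $(p+2)B(e_1,e_2)=0$, hence degeneracy whenever $p\neq-2$, which kills (N1) and (N6) with $l\neq-2$ at once), whereas you dispatch (N1) via Lemma~\ref{not1} with the identity $e_1+e_2$ and recover $l=-2$ inside the (N6) linear system. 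Both variants are correct; your verification that (N6) escapes both lemmas is logically unnecessary but sound, and your nondegeneracy inference is valid because you have already derived $b_{22}=0$, so $\det(b_{ij})=-b_{12}^2$ and $b_{12}=0$ would force degeneracy.

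There is, however, one concrete inaccuracy in your final claim, which you share with the paper's one-line finish (``it is straightforward to compute\dots''). Writing out all eight instances of \eqref{def-eq1} on basis vectors for (N6), the system contains, besides $b_{22}=0$ and $(l+2)b_{12}=0$, the further constraint $b_{11}=0$: taking $(a,b,c)=(e_1,e_2,e_1)$ gives $B(e_1,e_1)=B(e_1\circ e_2,e_1)=-B(e_2,e_1\star e_1)=0$, since $e_1\star e_1=2\,e_1\circ e_1=0$ (and this holds for every $l$, in particular for $l=-2$). So the admissible metrics are exactly $b_{11}=b_{22}=0$, $b_{12}=s\neq0$; they are \emph{not} ``precisely those displayed,'' because the display \eqref{dim-2} allows an arbitrary $k=B(e_1,e_1)$, and for $k\neq0$ that data actually violates invariance (the same triple $(e_1,e_2,e_1)$ fails, as does $(e_1,e_1,e_2)$, which yields $0=-B(e_1,e_1\star e_2)=B(e_1,e_1)$). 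The correct statement has $k=0$; this is corroborated by Theorem~\ref{3-dim}, where Type (C5) ($l=-2$) restricted to $\mathbb{C}e_1\oplus\mathbb{C}e_3$ is exactly this $2$-dimensional algebra and carries vanishing diagonal entries. Since your stated list of ``decisive'' consequences omits $b_{11}=0$, completing the computation you outline would have exposed this; the flaw originates in the theorem's displayed formula rather than in your method, but a full proof must record $k=0$.
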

\begin{proof}
	By Remark \ref{rmk-class}, we can assume that $(A, \circ)$ is one of the cases in Table 1.
	
	For Case (T1) in Table 1, $(A,\circ)$ is the trivial Novikov algebra. By Lemma \ref{not1}, there does not exist any quadratic Novikov algebra structure on  Case (N3). By Lemma \ref{not2}, there are no quadratic Novikov algebra structures on Cases (T2), (T3), (N2), (N4) and (N5).
	
	Suppose that there exists a nonzero element $e_{i}\in A$ such that $e_{i}\circ e_{i} = e_{i}$. Without loss of generality, we assume that $i = 2$ and $e_{2} \circ e_{1} = pe_{1} + qe_{2}$ for some $p$, $q\in \mathbb{C}$. Then we have
	\begin{align*}
		B(e_{2} \circ e_{1},e_{2}) =& B(pe_{1} + qe_{2},e_{2}) = pB(e_{1},e_{2}) + qB(e_{2},e_{2}) \\=&
		-B(e_{1},e_{2} \star e_{2}) = -2B(e_{1},e_{2}).
	\end{align*}
	Note that $B(e_{2}, e_{2})=0$. Therefore,  $(p + 2)B(e_{1},e_{2}) = 0$. If $p + 2 \neq 0$, then $B(e_{1},e_{2}) = 0$.
Therefore, the bilinear form $B(\cdot ,\cdot)$ is degenerate in Cases (N1) and (N6) when $l\neq -2$.

Finally, we only need to consider Case (N6) when $l=-2$. It is straightforward to compute that any symmetric invariant nondegenerate bilinear form on this case is of the form defined by Eq. (\ref{dim-2}). Then the proof is completed.
	
\delete{	If $p + 2 = 0$ then $B(e_{1},e_{2})$ may be not $0$. So if $p = -2$ and $B(e_{1},e_{2}) \neq 0$, the determinant is $-(B(e_{1},e_{2}))^2$ and the bilinear form $B(\cdot ,\cdot)$ is nondegenerate in the case (N6) where $l = -2$.
	So in the case (N6) and $l = -2$, $A$ could become a quadratic Novikov algebra with the bilinear form $B(\cdot ,\cdot)$,
	
	Let $f: A\rightarrow N$ be a linear map such that
	\begin{align*}
		B(e_{1},e_{2}) = \bar{B}(f(e_{1}),f(e_{1})),\\
		B(e_{1},e_{2}) = \bar{B}(f(e_{1}),f(e_{2})),\\
		B(e_{2},e_{2}) = \bar{B}(f(e_{1}),f(e_{1})).
	\end{align*}
	Hence $f$ is an isomorphism of quadratic Novikov algebras. This completes the proof.}
\end{proof}
\begin{thm}\label{3-dim}
Any $3$-dimensional nontrivial quadratic Novikov algebra is isomorphic to $(A=\mathbb{C}e_1\oplus \mathbb{C}e_2\oplus \mathbb{C}e_3, \circ, B(\cdot,\cdot))$ which is one of cases in the following table.
\begin{table}[H]
	  	\centering
	  	\caption{Classification of $3$-dimensional nontrivial quadratic Novikov algebras up to isomorphism}
	  	\begin{tabular}{c|c|c}
	  		\hline
	  		Type & Characteristic matrix of $(A,\circ)$ & metric matrix of the bilinear form\\ \hline
	  		(A7)($l= -2$) & $\begin{pmatrix}
	  			0 & 0 & 0\\
	  			0 & 0 & e_{1}\\
	  			0 & -2e_{1} & e_{2}
	  		\end{pmatrix}$ &
	  		$\begin{pmatrix}
	  		0 & 0 & k\\
	  		0 & k & 0\\
	  		k & 0 & t
	  		\end{pmatrix},$ $k, t\in \mathbb{C}$, $k^3\neq 0$	\\
	  		\hline
	  		(C5)($l= -2$) & $\begin{pmatrix}
	  			0 & 0 & e_{1}\\
	  			0 & 0 & 0\\
	  			-2e_{1} & 0 & e_3
	  		\end{pmatrix}$ &
	  		$\begin{pmatrix}
	  			0 & 0 & k\\
	  			0 & s & 0\\
	  			k & 0 & 0
	  		\end{pmatrix}$, $k, s\in \mathbb{C}$, $k^2s\neq 0$ 	\\
	  		\hline
	  		(D6)($l= -\frac{1}{2}$) & $\begin{pmatrix}
	  			e_{2} & 0 & e_{1}\\
	  			0 & 0 & e_{2}\\
	  			-\frac{1}{2}e_{1} & -2e_{2} & e_{3}
	  		\end{pmatrix}$ &
	  		$\begin{pmatrix}
	  			-2s & 0 & 0\\
	  			0 & 0 & s\\
	  			0 & s & 0
	  		\end{pmatrix}$, $s\in \mathbb{C}$, $s^3\neq 0$  \\ \hline
	  	\end{tabular}
	  \end{table}

	\delete{Let $(A,\circ,B(\cdot,\cdot))$ be a nontrivial three-dimensional quadratic Novikov algebra equipped with a basis $\{e_{1},e_{2},e_{3}\}$. Then $A$ is isomorphic to quadratic Novikov $(N,\circ,\bar{B}(\cdot,\cdot))$ algebra whose algebraic structure is one of the cases $(A7)(l = -2),(A11)(l = -1),(C5)(l = -1) and (D6)(l = -\frac{1}{2})$ from \cite{BD1}.}
\end{thm}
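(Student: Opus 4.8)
The plan is to run the two-step procedure of Remark~\ref{rmk-class}. Starting from the classification of $3$-dimensional Novikov algebras over $\mathbb{C}$ in \cite{BM1}, I would take each isomorphism type in turn, write down its characteristic matrix, and decide whether the Novikov algebra $(A,\circ)$ carries a nondegenerate symmetric invariant bilinear form, computing all such forms when they exist. Concretely, for a fixed type I expand the invariance identity $B(a\circ b,c)=-B(b,a\star c)$ on a basis $\{e_1,e_2,e_3\}$; this turns into a homogeneous linear system in the six entries $b_{ij}=B(e_i,e_j)$ of the symmetric metric matrix, and the existence of a quadratic structure amounts to this system having a solution with $\det(b_{ij})\neq0$. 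Since \cite{BM1} gives finitely many types, the whole theorem reduces to a finite, if lengthy, computation.

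Before grinding through the full list I would dispose of the easy types with the obstructions already established. Lemma~\ref{not1} removes every type possessing an identity. For any type containing a nonzero element $e$ with $A\circ e=0$ I would use the argument of Lemma~\ref{not2}: invariance gives $B(e,A\star A)=0$, and together with $B(e,e\circ e)=0$ from Remark~\ref{rmk-1} this typically forces $e\in A^{\perp}$, contradicting nondegeneracy. A complementary and conceptually cleaner reduction is supplied by the decomposition theorem: since there are no nontrivial $1$-dimensional quadratic Novikov algebras, every \emph{decomposable} nontrivial $3$-dimensional example must be an orthogonal direct sum of a trivial $1$-dimensional piece and the unique $2$-dimensional nontrivial quadratic Novikov algebra of Theorem~\ref{2dim}. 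I expect this to account for exactly case (C5) (with $\mathbb{C}e_2$ the trivial summand and $\langle e_1,e_3\rangle$ the $2$-dimensional algebra), so that the genuinely new work is to locate the \emph{indecomposable} nontrivial types, namely (A7) and (D6).

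For each type that survives these eliminations I would then solve the linear system for $B$, impose $\det B\neq0$, and finally normalise: using the automorphisms of the underlying Novikov algebra (the invertible linear maps commuting with $\circ$) I would reduce the resulting family of metrics to a canonical representative, and simultaneously read off the parameter value of the type for which a nondegenerate invariant form survives. This is what should force $l=-2$ in types (A7) and (C5) and $l=-\tfrac12$ in type (D6); matching the normalised data against the table in the statement then yields precisely the three listed cases.

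The main obstacle here is bookkeeping rather than any single hard idea: the $3$-dimensional list of \cite{BM1} has many types, and for each one must both determine the full space of invariant forms and carry out the automorphism-reduction so as to land on a unique canonical representative without over- or under-counting. The two delicate points are (i) pinning down exactly which parameter values $l$ admit a nondegenerate invariant form, since most values are excluded by the same degeneracy mechanism used in Theorem~\ref{2dim}, and (ii) ensuring that the normalisations chosen for different types do not secretly identify two of the final cases. For the latter I would use isomorphism invariants, such as whether the algebra is decomposable and the dimensions of $A\circ A$ and of the annihilator, as consistency checks against the decomposition theorem.
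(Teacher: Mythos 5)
Your proposal is correct and shares the backbone of the paper's proof: run the two-step procedure of Remark~\ref{rmk-class} over the list of $3$-dimensional Novikov algebras in \cite{BM1}, kill most types by exhibiting a vector forced into the radical of $B(\cdot,\cdot)$, and solve the invariance equations explicitly for the survivors. The difference lies in how the eliminations are organised. Where you propose a toolkit of Lemma~\ref{not1}, an ad hoc adaptation of Lemma~\ref{not2} (your hedge ``typically forces'' is apt: from $a\circ e=0$ for all $a$ one gets only $B(e,A\star A)=0$, and in dimension $3$ the subspace $A\star A$ can be too small to force $e\in A^{\perp}$, so this argument alone does not close every case), with brute-force linear systems as a fallback, the paper instead groups the \cite{BM1} types into six structural multiplication patterns (Cases 1--6, e.g.\ an element with $e_i\circ e_i=e_k$ and $e_i\star e_j=0$ for $j\neq i$, or an idempotent with $e_i\star e_j=me_j$, $m\neq -1$), each of which degenerates $B(\cdot,\cdot)$ for a whole batch of types at once and uniformly pins down the exceptional parameters $l=-2$ and $l=-\tfrac12$, with one residual direct check (Case 7); your type-by-type grind would reach the same endpoint at greater length. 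Your decomposition-theorem shortcut is sound and does isolate exactly (C5)($l=-2$), namely the trivial ideal $\mathbb{C}e_2$ orthogonal to the $2$-dimensional algebra of Theorem~\ref{2dim} spanned by $e_1,e_3$, though the paper makes no use of it. One caution on your final step: the paper performs no automorphism normalisation---the table deliberately lists \emph{families} of metrics with free parameters $k,t,s$ subject only to nondegeneracy---so to prove the statement as written you should stop after solving the linear system and imposing $\det B\neq 0$; reducing to canonical representatives is extra work and would yield a finer list than the one you must match.
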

\begin{proof}
Note that all $3$-dimensional Novikov algebras up to isomorphism have been classified in \cite{BM1}. By Remark \ref{rmk-class}, we can assume that $(A, \circ)$ is one of the cases classified in \cite{BM1}. Then we consider symmetric invariant nondegenerate bilinear forms on $(A, \circ)$
	
{\bf Case 1:} Assume that there exist $e_{i}$, $e_{k}\in \{e_1,e_2, e_3\}$ $(i\neq k)$ such that $e_{i}\circ e_{i} = e_{k}$ and $e_{i}\star e_{j} = 0$ for each $j\neq i$. Then we have \begin{align*}
		B(e_{i}\circ e_{i},e_{j}) &= B(e_{k},e_{j}) = -B(e_{i},e_{i}\star e_{j}) = 0.
	\end{align*}
Note that $B(e_k, e_i)=B(e_{i}\circ e_{i},e_{i})=0$. Therefore, the bilinear form $B(\cdot,\cdot)$ in this case is degenerate. Thus there are no quadratic Novikov algebra structures on Cases (A2), (A3) and (A6) given in  \cite[Table 2]{BM1}.

{\bf Case 2:} Suppose that the basis $\{e_{i},e_{j},e_{k}\}$ of $(A,\circ)$ satisfies $e_{i}\circ e_{j} = e_{k}$, $e_{i}\star e_{l} = 0$ for each $l\neq j$,  and $e_{j}\circ e_{i} = me_{k}$ $(m\neq -2)$.
Then we have
	 \begin{align*}
	 	B(e_{i}\circ e_{j},e_{l}) &= B(e_{k},e_{l}) = -B(e_{j},e_{i}\star e_{l}) = 0,\\
	 	B(e_{i}\circ e_{j},e_{j}) &= B(e_{k},e_{j}) = -B(e_{j},e_{i}\star e_{j}) \\=& -B(e_{j},(m+1)e_{k}) = 0.
	 \end{align*}
Hence the bilinear form $B(\cdot,\cdot)$ in this case is degenerate. Therefore there are no quadratic Novikov algebra structures on Cases (A4), (A5), (A7) ($l \neq -2$), (A9), (C4), (C5)($l \neq$ -2), (C18), (C19)  given in  \cite[Tables 2 and 4]{BM1}.
	
{\bf Case 3:} Assume that there exists $e_{i}\in \{e_1, e_2, e_3\}$ such that $e_{i}\circ e_{i} = e_{i}$ and $e_{i}\star e_{j} = me_{j}$ $(m\neq -1)$ for each $j\neq i$.
Then we have
	 \begin{align*}
	 	&B(e_{i}\circ e_{i},e_{j}) = B(e_{i},e_{j}) = -B(e_{i},e_{i}\star e_{j}) = -mB(e_{i},e_{j}) = 0.
	 \end{align*}
Note that $B(e_i,e_i)=B(e_i\circ e_i, e_i)=0$. Hence the bilinear form $B(\cdot,\cdot)$ in this case is degenerate. Therefore there are no quadratic Novikov algebra structures on Cases (B1)-(B5), (C1)-(C3), (D1), (D2), (D4), (E1) given in \cite[Tables 3, 4, 5, 6]{BM1}.
	
{\bf Case 4:} Assume that there exists $e_{j}\in \{e_1, e_2, e_3\}$  such that $e_{i}\circ e_{j} = e_{i}$ for each $i\neq j$ and $e_{k}\circ e_{l} = 0$ for each $k\neq j$ and $l\neq j$. Without loss of generality, we take $j = 3$. Then we have
	  \begin{align*}
	  	&B(e_{k}\circ e_{3},e_{l}) = B(e_{k},e_{l}) = -B(e_{3},e_{k}\star e_{l}) = 0.
	  \end{align*}
Hence the bilinear form $B(\cdot,\cdot)$ in this case is degenerate. Therefore there are no quadratic Novikov algebra structures on Cases (C6)-(C17) given in  \cite[Table 4]{BM1}.
	
{\bf Case 5}: Assume that the basis $\{e_1, e_{2},e_{3}\}$ of $(A, \circ)$ satisfies
\begin{eqnarray*}
e_{2}\circ e_{2} = 0, \;\;e_{2}\circ e_{3} = e_{2}, \;\;e_{3}\circ e_{2} = me_{2}\;(m\neq -2),\;\;e_{3}\circ e_{3} = ke_{2} + le_{3}\;(l\neq 0).
	  \end{eqnarray*}
	  Then we have
	  \begin{align*}
	  	&B(e_{2}\circ e_{3},e_{2}) = B(e_{2},e_{2}) = -B(e_{3},e_{2}\star e_{2}) = 0,\\
	  	&B(e_{2}\circ e_{3},e_{3}) = B(e_{2},e_{3}) = -B(e_{3},e_{2}\star e_{3}) = -B(e_{3},(m + 1)e_{2}) = 0.
\end{align*}
Note that $B(e_{3}\circ e_{3},e_{3}) = B(ke_{2} + le_{3},e_{3}) = lB(e_{3},e_{3}) = 0$.
Thus the bilinear form $B(\cdot,\cdot)$ in this case is degenerate. Therefore there are no quadratic Novikov algebra structures on Cases  (D3), (D5), (D6) ($l \neq -\frac{1}{2}$) in \cite[Table 5]{BM1}.
	
{\bf Case 6:} Assume that the basis $\{e_1, e_{2},e_{3}\}$ of $(A, \circ)$ satisfies
	  \begin{align*}
	  	e_{1}\circ e_{3} = 0,\;\;e_{3}\circ e_{1} = e_{1},\;\;e_{3}\star e_{2} = me_{2} + ne_{1}\;(m\neq -1),\;\;e_{3}\circ e_{3} = 0.
	  \end{align*}
	  Then we have
	  \begin{align*}
	  	&B(e_{3}\circ e_{1},e_{1}) = B(e_{1},e_{1}) = -B(e_{1},e_{3}\star e_{1}) = -B(e_{1},e_{1}) = 0,\\
	  	&B(e_{3}\circ e_{1},e_{2}) = B(e_{1},e_{2}) = -B(e_{1},e_{3}\star e_{2}) = -mB(e_{1},e_{2}) = 0,\\
	  	&B(e_{3}\circ e_{1},e_{3}) = B(e_{1},e_{3}) = -B(e_{1},e_{3}\star e_{3}) = 0.
	  \end{align*}
Thus the bilinear form $B(\cdot,\cdot)$ in this case is degenerate. Therefore there are no quadratic Novikov algebra structures on Cases (A11)( $l \neq -1$), (A12), (A13) in \cite[Table 2]{BM1}.
	
{\bf Case 7:}	For Cases (A8), (A10), (A11)\;($l= -1$) in \cite[Table 2]{BM1}, it is direct to check that there are no quadratic Novikov algebra structures on these cases.
	
By Cases 1-7, we only need to consider Cases (A7) $(l = -2)$, (C5) $(l = -2)$ and (D6) $(l = -\frac{1}{2})$ given in \cite{BM1}. It is straightforward to check that the symmetric invariant nondegenerate bilinear forms  in these cases are characterized in Table 2. Then the proof is completed.

\delete{Then we get the algebraic structure of $A$ is one of the four cases.
	
	Consider the case $(A7)(l= -2)$. Let $f: A\rightarrow N$ be a linear map such that
	\begin{align*}
		B(e_{1},e_{3}) = \bar{B}(f(e_{1}),f(e_{3})),\\
		B(e_{2},e_{2}) = \bar{B}(f(e_{2}),f(e_{2})),\\
		B(e_{3},e_{3}) = \bar{B}(f(e_{3}),f(e_{3})).
	\end{align*}
	Hence $f$ is an isomorphism of quadratic Novikov algebras. The other three cases are also. This completes the proof.}
\end{proof}

\section{Double extensions of quadratic Novikov algebras}
In this section, we introduce a construction of quadratic Novikov algebras called double extension, and show that a quadratic Novikov algebra containing a nonzero isotropic ideal is isomorphic to a double extension of some lower-dimensional quadratic Novikov algebra.

\begin{lem}\label{th2}
	Let $(A_{1},\circ)$ be a Novikov algebra and $A_{2}$ be a vector space. Let $A_{2}^{\ast}$ be the dual vector space of $A_{2}$ and $\varphi:A_{1} \times A_{1} \rightarrow A_{2}^{\ast}$ be a bilinear map.
	Then $(A_{1}\oplus A_{2}^{\ast},\ast)$ is a Novikov algebra defined by
	\begin{align}
		\label{3.1}(x_{1} + f)\ast (y_{1} + g) = x_{1}\circ y_{1} + \varphi(x_{1},y_{1}),\;\;\; x_1, y_1\in A_1, f, g\in A_2^\ast,
	\end{align}
	 if and only if $\varphi$ satisfies the following conditions:
	\begin{eqnarray}
		&&\label{cent-1}\varphi(x_{1}\circ y_{1},z_{1}) - \varphi(x_{1},y_{1}\circ z_{1}) = \varphi(y_{1}\circ x_{1},z_{1}) - \varphi(y_{1},x_{1}\circ z_{1}),\\
		&&\label{cent-2}\varphi(x_{1}\circ y_{1},z_{1}) = \varphi(x_{1}\circ z_{1},y_{1}),\;
		  x_{1},\; y_{1},\; z_{1} \in A_{1}.
	\end{eqnarray}
\end{lem}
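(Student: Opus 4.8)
The plan is to verify the two defining identities of a Novikov algebra directly for the operation $\ast$ on $A_{1}\oplus A_{2}^{\ast}$, and to exploit the fact that the product \eqref{3.1} splits cleanly into an $A_{1}$-component (governed by $\circ$) and an $A_{2}^{\ast}$-component (governed by $\varphi$). The crucial structural observation is that the $A_{2}^{\ast}$-component of any product $\ast$ never feeds back into a later product: in $(x_{1}+f)\ast(y_{1}+g)$ the elements $f,g$ are irrelevant, and the result has $A_{1}$-part $x_{1}\circ y_{1}$ and $A_{2}^{\ast}$-part $\varphi(x_{1},y_{1})$. Consequently, when evaluating any triple product, the $A_{1}$-parts evolve exactly as in $(A_{1},\circ)$ while the $A_{2}^{\ast}$-parts are controlled entirely by $\varphi$ applied to $A_{1}$-elements. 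So I would fix general elements $a=x_{1}+f$, $b=y_{1}+g$, $c=z_{1}+h$ with $x_{1},y_{1},z_{1}\in A_{1}$ and $f,g,h\in A_{2}^{\ast}$, and project each Novikov identity onto its $A_{1}$- and $A_{2}^{\ast}$-components.

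For the left-symmetric (associator) identity, I would compute
\begin{align*}
(a\ast b)\ast c-a\ast(b\ast c)=\big[(x_{1}\circ y_{1})\circ z_{1}-x_{1}\circ(y_{1}\circ z_{1})\big]+\big[\varphi(x_{1}\circ y_{1},z_{1})-\varphi(x_{1},y_{1}\circ z_{1})\big],
\end{align*}
and the analogous expression with $a,b$ interchanged. Since $(A_{1},\circ)$ is already a Novikov algebra, the two $A_{1}$-bracketed terms agree, so the full identity for $\ast$ holds if and only if the $A_{2}^{\ast}$-bracketed terms agree, which is precisely \eqref{cent-1}.

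For the right-commutativity identity, the same projection gives
\begin{align*}
(a\ast b)\ast c-(a\ast c)\ast b=\big[(x_{1}\circ y_{1})\circ z_{1}-(x_{1}\circ z_{1})\circ y_{1}\big]+\big[\varphi(x_{1}\circ y_{1},z_{1})-\varphi(x_{1}\circ z_{1},y_{1})\big],
\end{align*}
where again the $A_{1}$-part vanishes by the right-commutativity of $\circ$, leaving the $A_{2}^{\ast}$-part, which vanishes exactly when \eqref{cent-2} holds. This establishes the ``if'' direction. The ``only if'' direction follows from the same two displays read backwards: since the $A_{1}$-components always cancel and the $A_{2}^{\ast}$-components are independent of $f,g,h$, I may take $f=g=h=0$, and the requirement that both Novikov identities hold for $\ast$ forces \eqref{cent-1} and \eqref{cent-2} for arbitrary $x_{1},y_{1},z_{1}\in A_{1}$.

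I do not expect a genuine obstacle here; the content is entirely the bookkeeping of separating components, and the single idea that makes it work is that $\varphi$-outputs lie in the ``inert'' factor $A_{2}^{\ast}$ on which $\ast$ acts trivially. The only point requiring minor care is confirming that the $A_{2}^{\ast}$-component of a product depends solely on the $A_{1}$-components of its inputs, so that the two axioms decouple into a statement about $\circ$ (automatic) and a statement about $\varphi$ (the two displayed conditions).
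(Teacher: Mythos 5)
Your proposal is correct and matches the paper's (omitted as ``straightforward'') argument exactly: expand the triple products, observe that the $A_{2}^{\ast}$-summands of the factors never contribute since the product of Eq.~\eqref{3.1} depends only on the $A_{1}$-components, and compare components in the direct sum so that the $A_{1}$-parts cancel by the Novikov axioms for $\circ$ while the $A_{2}^{\ast}$-parts yield precisely Eqs.~\eqref{cent-1} and \eqref{cent-2}. No gaps; the decoupling observation you highlight is exactly the content of the lemma.
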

\begin{proof}
	It is straightforward.
\end{proof}
\delete{
 \begin{proof}
	For $x_{1},\; y_{1},\; z_{1} \in A_{1}$ and $f,\; g,\; h \in A_{2}^{\ast}$, we have
	\begin{align*}
		((x_{1} + f)\ast(y_{1} + g))\ast(z_{1} + h) =& (x_{1}\circ y_{1} + \varphi(x_{1},y_{1}))\ast(z_{1} + h) \\
		=& (x_{1}\circ y_{1})\circ z_{1} + \varphi(x_{1}\circ y_{1},z_{1}),\\
		((x_{1} + f)\ast(z_{1} + h))\ast(y_{1} + g) =& (x_{1}\circ z_{1} + \varphi(x_{1},z_{1}))\ast(y_{1} + h) \\
		=& (x_{1}\circ z_{1})\circ y_{1} + \varphi(x_{1}\circ z_{1},y_{1}).
	\end{align*}
	Then since
	\begin{align*}
		(x_{1}\circ y_{1})\circ z_{1} = (x_{1}\circ z_{1})\circ y_{1},
	\end{align*}
	 we have
	\begin{align*}
		((x_{1} + f)\ast(y_{1} + g))\ast(z_{1} + h) = ((x_{1} + f)\ast(z_{1} + h))\ast(y_{1} + g).
	\end{align*} if and only if
	\begin{align*}
		\varphi(x_{1}\circ y_{1},z_{1}) = \varphi(x_{1}\circ z_{1},y_{1}).
	\end{align*}
	
	Next
	\begin{align*}
		((x_{1} + f)\ast(y_{1} + g))\ast(z_{1} + h) &= (x_{1}\circ y_{1} +\varphi(x_{1},y_{1}))\ast(z_{1} + h)
		\\&= (x_{1}\circ y_{1})\circ z_{1} + \varphi(x_{1}\circ y_{1},z_{1}),\\
		-(x_{1} + f)\ast((y_{1} + g)\ast(z_{1} + h)) &= -(x_{1} + f)\ast(y_{1}\circ z_{1} + \varphi(y_{1},z_{1}))
		\\&= - (x_{1}\circ (y_{1}\circ z_{1}) + \varphi(x_{1},y_{1}\circ z_{1})), \\
		((y_{1} + g)\ast(x_{1} + f))\ast(z_{1} + h) &= (y_{1}\circ z_{1} + \varphi(y_{1},z_{1}))\ast(z_{1} + h)
		\\&= (y_{1}\circ x_{1})\circ z_{1} + \varphi(y_{1}\circ x_{1},z_{1}), \\
		-(y_{1} + g)\ast((x_{1} + f)\ast(z_{1} + h)) &= -(y_{1} + g)\ast((x_{1}\circ z_{1} + \varphi(x_{1},z_{1})))
		\\&= -(y_{1}\circ (x_{1}\circ z_{1}) + \varphi(y_{1},x_{1}\circ z_{1})).
	\end{align*}
	Then since
	\begin{align*}
		(x_{1}\circ y_{1})\circ z_{1} - x_{1}\circ (y_{1}\circ z_{1}) &=
		(y_{1}\circ x_{1})\circ z_{1} - y_{1}\circ (x_{1}\circ z_{1}),
	\end{align*}
	we have
	\begin{align*}
		((x + f)\ast(y + g))\ast(z + h) - (x + f)\ast((y + g)\ast(z + h)) =\\ ((y + g)\ast(x + f))\ast(z + h) - (y + g)\ast((x + f)\ast(z + h))
	\end{align*} if and only if
	\begin{align*}
		\varphi(x_{1}\circ y_{1},z_{1}) - \varphi(x_{1},y_{1}\circ z_{1}) &=
		\varphi(y_{1}\circ x_{1},z_{1}) - \varphi(y_{1},x_{1}\circ z_{1}).
	\end{align*}
	Hence $(A_{1}\oplus A_{2}^{\ast},\ast)$ is a Novikov algebra if and only if \begin{align*}
		\varphi(x_{1}\circ y_{1},z_{1}) &= \varphi(x_{1}\circ z_{1},y_{1}),\\
		\varphi(x_{1}\circ y_{1},z_{1}) - \varphi(x_{1},y_{1}\circ z_{1}) &=
		\varphi(y_{1}\circ x_{1},z_{1}) - \varphi(y_{1},x_{1}\circ z_{1}).
	\end{align*}
	
	Moreover, for any $x_{1} + f \in A_{1}\oplus A_{2}^{\ast}$ we take $0 + g \in A_{1}\oplus A_{2}^{\ast}$, then
	\begin{align*}
		(0 + g)\ast(x + f) = 0\circ x + \varphi(0,x) = 0
	\end{align*}
	thus $A_{2}^{\ast}$ is the centre of $A_{1}\oplus A_{2}^{\ast}$.
\end{proof}}

\begin{lem}\label{de}
	Let $(A_{1},\circ)$ and $(A_{2}, \circ)$ be Novikov algebras, $(A_{1} \oplus A_{2}^{\ast},\ast)$ be the Novikov algebra defined by Eq. (\ref{3.1}). Let $\eta$, $\eta^{\prime} : A_{2} \rightarrow \text{End}_{\bf k}(A_{1}\oplus A_{2}^{\ast})$ be two linear maps and
	$\epsilon: A_{2}\times A_{2} \rightarrow A_{1}\oplus A_{2}^{\ast}$ be a bilinear map. Then $(A_{2}\oplus A_{1}\oplus A_{2}^{\ast},\cdot)$ is a Novikov algebra given by
	\begin{eqnarray*}
		(x_{2} + x_{1} +f)\cdot(y_{2} + y_{1} + g) &=& x_{2}\circ y_{2} + \epsilon(x_{2},y_{2}) + (x_{1} + f)\ast(y_{1} + g) + \eta(x_{2})(y_{1} + g) \\
&&\quad+ \eta^{\prime}(y_{2})(x_{1} + f),\;\;x_1, y_1\in A_1, x_2, y_2\in A_2, f, g\in A_2^\ast,
	\end{eqnarray*}
	if and only if $\eta$, $\eta^{\prime}$ and $\epsilon$ satisfy the following conditions:
	\begin{align}
		\label{3.2.1}&\eta(x_{2})((x_{1} + f)\ast(y_{1} + g)) \\ \nonumber =& (\eta(x_{2})(x_{1} + f) - \eta^{\prime}(x_{2})(x_{1}+f))\ast (y_{1} + g) + (x_{1} + f)\ast(\eta(x_{2})(y_{1} + g)),\\
		\label{3.2.2}&\eta^{\prime}(x_{2})((x_{1} + f)\ast(y_{1} + g) - (y_{1} + g)\ast (x_{1} + f)) \\\nonumber =& (x_{1} + f)\ast(\eta^{\prime}(x_{2})(y_{1} + g)) - (y_{1} + g)\ast(\eta^{\prime}(x_{2})(x_{1} + f)),\\
		\label{3.2.3}&\eta^{\prime}(x_{2}\circ y_{2})(x_{1} + f) \\\nonumber =& \eta^{\prime}(y_{2})(\eta^{\prime}(x_{2})(x_{1} + f) - \eta(x_{2})(x_{1} + f)) + \eta(x_{2})\eta^{\prime}(y_{2})(x_{1} + f) - (x_{1} + f)\ast\epsilonup(x_{2},y_{2}),\\
		\label{3.2.4}&\eta(x_{2}\circ y_{2} - y_{2}\circ x_{2})(x_{1} + f) \\\nonumber =& (\eta(x_{2})\eta(y_{2}) - \eta(y_{2})\eta(x_{2}))(x_{1} + f) - (\epsilon(x_{2},y_{2}) - \epsilon(y_{2},x_{2}))\ast(x_{1} + f),\\
		\label{3.2.5}&\epsilon(x_{2}\circ y_{2} , z_{2}) - \epsilon(x_{2},y_{2}\circ z_{2}) - \epsilon(y_{2}\circ x_{2} , z_{2}) + \epsilon(y_{2},x_{2}\circ z_{2}) \\ \nonumber =& -\eta^{\prime}(z_{2})(\epsilon(x_{2},y_{2}) - \epsilon(y_{2},x_{2})) + \eta(x_{2})\epsilon(y_{2},z_{2}) - \eta(y_{2})\epsilon(x_{2},z_{2}),\\
		\label{3.2.6}&(\eta(x_{2})(x_{1} + f))\ast(y_{1} + g) = (\eta(x_{2})(y_{1} + g))\ast(x_{1} + f), \\	
		\label{3.2.7}&(\eta^{\prime}(x_{2})(x_{1} +f))\ast(y_{1} + g) = \eta^{\prime}(x_{2})((x_{1} + f)\ast(y_{1} + g)),\\
		\label{3.2.8}&\eta^{\prime}(y_{2})\eta^{\prime}(x_{2})(x_{1} + f) = \eta^{\prime}(x_{2})\eta^{\prime}(y_{2})(x_{1} + f),\\
		\label{3.2.9}&\eta^{\prime}(y_{2})(\eta(x_{2})(x_{1} + f)) = \epsilon(x_{2},y_{2})\ast(x_{1} + f) + \eta(x_{2}\circ y_{2})(x_{1} + f),\\
		\label{3.2.10}&\eta^{\prime}(z_{2})\epsilon(x_{2},y_{2}) + \epsilon(x_{2}\circ y_{2},z_{2}) =  \eta^{\prime}(y_{2})\epsilon(x_{2},z_{2}) + \epsilon(x_{2}\circ z_{2},y_{2}),
	\end{align}
for all $x_{1}$, $ y_{1} \in A_{1}$ , $f$, $ g \in A_{2}^{\ast}$ and $x_{2}$, $ y_{2}$, $ z_{2} \in A_{2}$.
\end{lem}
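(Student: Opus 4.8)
This lemma is a direct structural verification, so the plan is to check that the operation $\cdot$ satisfies the two defining Novikov identities and to read off exactly conditions \eqref{3.2.1}--\eqref{3.2.10} along the way. The essential simplification is that both Novikov identities are multilinear in their three arguments, so it suffices to verify them when each argument is \emph{homogeneous}, i.e.\ lies either entirely in $A_2$ or entirely in $A_1\oplus A_2^\ast$. Writing generic arguments as $x_2+x_1+f$, this reduces each axiom to the $2^3=8$ configurations determined by the types of the three arguments; reassembling these by multilinearity then gives the ``if'' direction, while specializing to homogeneous arguments gives the ``only if'' direction.

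The key bookkeeping observation is that the product carries a coarse grading: the $A_2$-component of $(x_2+x_1+f)\cdot(y_2+y_1+g)$ is precisely $x_2\circ y_2$, whereas all of $\epsilon(x_2,y_2)$, $(x_1+f)\ast(y_1+g)$, $\eta(x_2)(y_1+g)$ and $\eta'(y_2)(x_1+f)$ lie in $A_1\oplus A_2^\ast$. Hence, in every configuration the $A_2$-component of each Novikov identity collapses to the corresponding identity for $(A_2,\circ)$, which holds by hypothesis; the content of the lemma therefore lives entirely in the $A_1\oplus A_2^\ast$-components.

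I would then dispose of the two axioms in turn. In both, the configuration with all three arguments in $A_1\oplus A_2^\ast$ is automatic, since $(A_1\oplus A_2^\ast,\ast)$ is already a Novikov algebra by Lemma~\ref{th2}. For right-commutativity $(X\cdot Y)\cdot Z=(X\cdot Z)\cdot Y$, the remaining configurations, organized by the positions of the $A_2$-arguments, yield \eqref{3.2.6} (one $A_2$-argument in the first slot), \eqref{3.2.7} (one $A_2$-argument in a later slot), \eqref{3.2.8} and \eqref{3.2.9} (two $A_2$-arguments), and \eqref{3.2.10} (all three in $A_2$, where right-commutativity of $\circ$ on $A_2$ cancels the $A_2$-component). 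For left-symmetry, which asserts that the associator $(X\cdot Y)\cdot Z-X\cdot(Y\cdot Z)$ is symmetric in $X$ and $Y$, the mixed configurations produce \eqref{3.2.1}--\eqref{3.2.4}, and the all-$A_2$ configuration produces \eqref{3.2.5} once left-symmetry of $\circ$ on $A_2$ is used to cancel the $A_2$-component.

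The computations are routine expansions of the product, so the difficulty is organizational rather than conceptual. The main obstacle is making sure that all eight type-configurations are treated for each axiom, that configurations related by swapping the last two arguments (for right-commutativity) or the first two (for left-symmetry) are identified so that each of the ten conditions is obtained exactly once and none is missed, and that the equivalence is argued in both directions.
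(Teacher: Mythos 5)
Your proposal is correct, but it takes a genuinely different route from the paper. The paper's entire proof is a citation: it recognizes the product $\cdot$ as a unified product in the sense of the extending-structures theory and invokes \cite[Example 3.7]{Hong} with $(A,\circ)=(A_{1}\oplus A_{2}^{\ast},\ast)$, $(V,\cdot)=(A_{2},\circ)$, $l_{V}=\eta$, $r_{V}=\eta^{\prime}$ and $f=\epsilon$, so that Eqs.~\eqref{3.2.1}--\eqref{3.2.10} are exactly the known compatibility conditions from that framework. Your direct verification is sound: the multilinearity reduction to homogeneous arguments is valid in both directions of the equivalence; your grading observation (the $A_{2}$-component of any product is just $x_{2}\circ y_{2}$, every other term landing in $B:=A_{1}\oplus A_{2}^{\ast}$) correctly disposes of the $A_{2}$-components via the Novikov axioms of $(A_{2},\circ)$; and your assignment of configuration classes to conditions checks out on inspection --- writing types as triples over $\{A_{2},B\}$, right-commutativity (symmetric under swapping its last two arguments) yields \eqref{3.2.6} from $(A_{2},B,B)$, \eqref{3.2.7} from $(B,B,A_{2})\sim(B,A_{2},B)$, \eqref{3.2.8} from $(B,A_{2},A_{2})$, \eqref{3.2.9} from $(A_{2},B,A_{2})\sim(A_{2},A_{2},B)$ and \eqref{3.2.10} from $(A_{2},A_{2},A_{2})$, while left-symmetry (symmetric under swapping its first two arguments) yields \eqref{3.2.2}, \eqref{3.2.1}, \eqref{3.2.4}, \eqref{3.2.3} from the mixed classes and \eqref{3.2.5} from the all-$A_{2}$ class, with $(B,B,B)$ trivial in both axioms by Lemma \ref{th2}. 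What your approach buys is a self-contained, elementary proof that makes the origin of each of the ten conditions explicit and does not depend on outside machinery; what the paper's citation buys is brevity and a conceptual placement of the lemma as a special case of the general classification of algebra structures extending $(A_{1}\oplus A_{2}^{\ast},\ast)$ through the complement $A_{2}$.
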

\begin{proof}
	It follows from \cite[Example 3.7]{Hong} by letting $(A,\circ)= (A_{1} \oplus A_{2}^{\ast}, \ast)$, $(V, \cdot) = (A_{2}, \circ)$, $l_V=\eta$, $r_V=\eta^{\prime}$ and $f=\epsilon$.
\end{proof}

\delete{\begin{defi}\cite{SCM}
	Let $A_{2}$ be a Novikov algebra and $V$ be a vector space. Let $\omega: A_{2}\times A_{2} \rightarrow V$ be a bilinear map and $\delta,\delta^{\prime}: A_{2} \rightarrow End(V)$ be two linear maps. Then $\omega$ is called {\bf 2-cocycle} if
	\begin{align*}
		&\omega(x_{2},y_{2}\circ z_{2})  + \omega(y_{2}\circ x_{2},z_{2}) +\delta(x_{2})\omega(y_{2},z_{2}) + \delta^{\prime}(z_{2})\omega(y_{2},x_{2})
		\\=& \omega(x_{2}\circ y_{2},z_{2}) + \omega(y_{2},x_{2}\circ z_{2}) + \delta^{\prime}(z_{2})\omega(x_{2},y_{2}) + \delta(y_{2})\omega(x_{2},z_{2}), \\
		&\omega(x_{2}\circ y_{2},z_{2}) + \delta^{\prime}(z_{2})\omega(x_{2},y_{2}) =
		 \omega(x_{2}\circ z_{2},y_{2}) + \delta^{\prime}(y_{2})\omega(x_{2},z_{2}).
	\end{align*}
\end{defi}}

Next, we present a general construction of quadratic Novikov algebras.
\begin{thm}\label{th1}
	Let $(A_{1},\circ,B_{1}(\cdot,\cdot))$ be a quadratic Novikov algebra, $(A_{2},\circ)$ be a Novikov algebra with a symmetric bilinear form $\tau(\cdot,\cdot)$. Let  $\mu$, $\mu^{\prime}: A_{2} \rightarrow \text{End}_{\bf k}(A_{1})$ be linear maps, $\varphi: A_{1}\times A_{1} \rightarrow A_{2}^{\ast}$, $v: A_{2}\times A_{1} \rightarrow A_{2}^{\ast}$, $v^{\prime}: A_{1}\times A_{2} \rightarrow A_{2}^{\ast}$ and $\lambda:A_{2}\times A_{2} \rightarrow A_{1}$, $\gamma:A_{2}\times A_{2} \rightarrow A_{2}^{\ast}$ be bilinear maps.
Then $(A_{2}\oplus A_{1}\oplus A_{2}^{\ast},\cdot,B(\cdot,\cdot))$ is a quadratic Novikov algebra defined by
		\begin{align*}
		(x_{2} + x_{1} + f)\cdot(y_{2} + y_{1} +g) =& x_{2}\circ y_{2} + \lambda(x_{2},y_{2}) + \gamma(x_{2},y_{2}) + x_{1}\circ y_{1} + \varphi(x_{1},y_{1})+ \mu(x_{2})(y_{1}) \\ &+ v(x_{2},y_{1}) + L^{\ast}_{\star}(x_{2})g  + \mu^{\prime}(y_{2})(x_{1}) + v^{\prime}(x_{1},y_{2}) - R^{\ast}_{\circ}(y_{2})f,\\
		B(x_{2} + x_{1} + f,y_{2} + y_{1} + g) =& \tau(x_{2},y_{2}) + B_{1}(x_{1},y_{1}) + f(y_{2}) + g(x_{2}),
	\end{align*}
for all $x_{2}$, $y_{2}\in A_{2}$, $ x_{1}$, $y_{1}\in A_{1}$ and $f$, $g\in A_{2}^{\ast}$ if and only if
	\begin{align}
		\label{3.4.1}&\varphi (x_{1},y_{1})(x_{2}) = -B_{1}(y_{1},\mu(x_{2})x_{1} + \mu^{\prime}(x_{2})x_{1}),\\
		\label{3.4.2}&B_{1}(\mu(x_{2})x_{1},y_{1}) = -B_{1}(x_{1},\mu(x_{2})y_{1} + \mu^{\prime}(x_{2})y_{1}),\\
		\label{3.4.3}&(\mu(x_{2})y_{1})\circ z_{1} = (\mu(x_{2})z_{1})\circ y_{1}, \\
		\label{3.4.4}&\mu^{\prime}(x_{2})x_{1}\circ y_{1} = \mu^{\prime}(x_{2})(x_{1}\circ y_{1}),\\
		\label{3.4.5}&\mu^{\prime}(x_{2})(x_{1}\circ y_{1} - y_{1}\circ x_{1}) = x_{1}\circ \mu^{\prime}(x_{2})y_{1} - y_{1}\circ \mu^{\prime}(x_{2})x_{1},\\
		\label{3.4.6}&\mu(x_{2})(x_{1}\circ y_{1}) = (\mu(x_{2})x_{1} - \mu^{\prime}(x_{2})x_{1})\circ y_{1} + x_{1}\circ \mu(x_{2})y_{1},\\
		\label{3.4.7}&\mu^{\prime}(x_{2})\mu^{\prime}(y_{2}) = \mu^{\prime}(y_{2})\mu^{\prime}(x_{2}),\\	
		\label{3.4.8}&\mu^{\prime}(y_{2})\mu(x_{2})x_{1} = \lambda(x_{2},y_{2})\circ x_{1} + \mu(x_{2}\circ y_{2})x_{1},\\		
		\label{3.4.9}&(\mu(x_{2}\circ y_{2}) - \mu(y_{2}\circ x_{2}))x_{1} \\ \nonumber =& (\mu(x_{2})\mu(y_{2}) - \mu(y_{2})\mu(x_{2}))x_{1} - (\lambda(x_{2},y_{2}) - \lambda(y_{2},x_{2}))\circ x_{1},\\
		\label{3.4.10}&\mu(x_{2})\mu^{\prime}(y_{2}) = \mu^{\prime}(y_{2})\mu(x_{2}),\\
		\label{3.4.11}&x_{1}\circ \lambda(y_{2},x_{2}) + \mu^{\prime}(y_{2}\circ x_{2})x_{1} = \mu^{\prime}(x_{2})\mu^{\prime}(y_{2})x_{1},\\
		\label{3.4.12}&\gamma(x_{2}\circ y_{2},z_{2}) - R^{\ast}_{\circ}(z_{2})\gamma(x_{2},y_{2}) + v^{\prime}(\lambda(x_{2},y_{2}),z_{2}) \\ \nonumber =& \gamma(x_{2}\circ z_{2},y_{2}) - R^{\ast}_{\circ}(y_{2})\gamma(x_{2},z_{2}) + v^{\prime}(\lambda(x_{2},z_{2}),y_{2}),\\
		\label{3.4.13}&\gamma(x_{2}\circ y_{2},z_{2}) - \gamma(x_{2},y_{2}\circ z_{2}) - \gamma(y_{2}\circ x_{2},z_{2})+ \gamma(y_{2},x_{2}\circ z_{2}) \\ \nonumber =& -v^{\prime}(\lambda(x_{2},y_{2}) - \lambda(y_{2},x_{2}),z_{2})) + R^{\ast}_{\circ}(z_{2})(\gamma(x_{2},y_{2}) - \gamma(y_{2},x_{2})) + v(x_{2},\lambda(y_{2},z_{2})) \\ \nonumber &+ L^{\ast}_{\star}(x_{2})\gamma(y_{2},z_{2}) - v(y_{2},\lambda(x_{2},z_{2})) - L^{\ast}_{\star}(y_{2})\gamma(x_{2},z_{2}) ,\\
		\label{3.4.14}& \tau(x_{2}\circ y_{2},z_{2}) + \gamma(x_{2},y_{2})(z_{2}) = - \tau(y_{2},x_{2}\star z_{2}) -\gamma(x_{2},z_{2})(y_{2}) - \gamma(z_{2},x_{2})(y_{2}),\\
		\label{3.4.15}&v(x_{2},y_{1})(z_{2}) = -B_{1}(y_{1},\lambda(x_{2},z_{2}) + \lambda(z_{2},x_{2})),\\
		\label{3.4.16}&v(x_{2},z_{1})(y_{2}) + v^{\prime}(z_{1},x_{2})(y_{2}) = -B_{1}(\lambda(x_{2},y_{2}),z_{1}),\\
		\label{3.4.17}&\lambda(x_{2}\circ y_{2},z_{2}) + \mu^{\prime}(z_{2})\lambda(x_{2},y_{2}) = \lambda(x_{2}\circ z_{2},y_{2}) + \mu^{\prime}(y_{2})\lambda(x_{2},z_{2}),\\
		\label{3.4.18}&\lambda(x_{2}\circ y_{2},z_{2}) - \lambda(x_{2},y_{2}\circ z_{2}) - \lambda(y_{2}\circ x_{2},z_{2}) + \lambda(y_{2},x_{2}\circ z_{2}) \\ \nonumber =& -\mu^{\prime}(z_{2})(\lambda(x_{2},y_{2}) - \lambda(y_{2},x_{2})) + \mu(x_{2})\lambda(y_{2},z_{2}) - \mu(y_{2})\lambda(x_{2},z_{2}),
	\end{align}
	for all $x_{2}$, $y_{2}$, $z_{2}\in A_{2}$ and $ x_{1}$, $y_{1}$, $z_{1}\in A_{1}$.
\end{thm}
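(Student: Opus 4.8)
The plan is to recognize the product on $A_{2}\oplus A_{1}\oplus A_{2}^{\ast}$ as a specialization of the general construction in Lemma \ref{de} and then to impose, separately, the three requirements that make $B(\cdot,\cdot)$ a quadratic structure: symmetry, nondegeneracy, and invariance. First I would match the data. Comparing with Lemma \ref{de}, I take $(A_{1}\oplus A_{2}^{\ast},\ast)$ to be the Novikov algebra of Lemma \ref{th2} and define $\eta,\eta':A_{2}\to \End_{\mathbf{k}}(A_{1}\oplus A_{2}^{\ast})$ and $\epsilon:A_{2}\times A_{2}\to A_{1}\oplus A_{2}^{\ast}$ by $\eta(x_{2})(y_{1}+g)=\mu(x_{2})(y_{1})+v(x_{2},y_{1})+L^{\ast}_{\star}(x_{2})g$, $\eta'(y_{2})(x_{1}+f)=\mu'(y_{2})(x_{1})+v'(x_{1},y_{2})-R^{\ast}_{\circ}(y_{2})f$, and $\epsilon(x_{2},y_{2})=\lambda(x_{2},y_{2})+\gamma(x_{2},y_{2})$. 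With these identifications the product is exactly the one in Lemma \ref{de}, so that $(A_{2}\oplus A_{1}\oplus A_{2}^{\ast},\cdot)$ is a Novikov algebra if and only if \eqref{3.2.1}--\eqref{3.2.10} hold.

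Next I would project each of \eqref{3.2.1}--\eqref{3.2.10} onto the summands $A_{1}$ and $A_{2}^{\ast}$ (the $A_{2}$-component being governed by $(A_{2},\circ)$ itself). Because $(A_{2}^{\ast},L^{\ast}_{\star},-R^{\ast}_{\circ})$ is a representation of $(A_{2},\circ)$ by the remark following Proposition \ref{pro-1}, the contributions that land purely in $A_{2}^{\ast}$ through the $L^{\ast}_{\star},-R^{\ast}_{\circ}$ terms are automatically consistent. The surviving $A_{1}$-components of \eqref{3.2.1}--\eqref{3.2.10} then yield precisely the structural relations among $\mu,\mu',\lambda$, namely \eqref{3.4.3}--\eqref{3.4.11}, together with the $\lambda$-identities \eqref{3.4.17} and \eqref{3.4.18}, while the $A_{2}^{\ast}$-components coupling $\gamma$ with $\lambda$ give \eqref{3.4.12} and \eqref{3.4.13}.

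I would then treat $B(\cdot,\cdot)$ directly. Symmetry is immediate, since $\tau$ and $B_{1}$ are symmetric and the term $f(y_{2})+g(x_{2})$ is unchanged under swapping the two arguments; nondegeneracy follows because $B_{1}$ is nondegenerate on $A_{1}$ and the canonical pairing $A_{2}\times A_{2}^{\ast}\to\mathbf{k}$ is nondegenerate, so the Gram matrix is block-triangular with invertible diagonal blocks. The substantial step is the invariance identity $B(X\cdot Y,Z)=-B(Y,X\star Z)$ for $X=x_{2}+x_{1}+f$, $Y=y_{2}+y_{1}+g$, $Z=z_{2}+z_{1}+h$. Testing it on the mixed choices of pure components, I expect the independent matches to produce exactly \eqref{3.4.1} (from $X,Y\in A_{1}$, $Z\in A_{2}$), \eqref{3.4.2} (from $X\in A_{2}$, $Y,Z\in A_{1}$), \eqref{3.4.14} (the $\tau$--$\gamma$ identity, from $X,Y,Z\in A_{2}$), and \eqref{3.4.15}--\eqref{3.4.16} (from the triples mixing one $A_{1}$-slot with two $A_{2}$-slots, expressing $v$ and $v'$ through $B_{1}$ and $\lambda$); the remaining invariance instances, which involve the $A_{2}^{\ast}$-slots, should hold automatically by the same dual-representation identity used above. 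Conversely, assuming \eqref{3.4.1}--\eqref{3.4.18}, these expansions verify invariance.

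The main obstacle will be the bookkeeping in the invariance computation: a single instance of $B(X\cdot Y,Z)=-B(Y,X\star Z)$ unfolds into many terms distributed across the three components, and one must be careful that the conditions extracted from invariance are compatible with, and not redundant against, the structural conditions coming from Lemma \ref{de}. In particular, once $\varphi$, $v$, and $v'$ are pinned down by \eqref{3.4.1}, \eqref{3.4.15}, and \eqref{3.4.16} in terms of $B_{1}$, $\mu$, $\mu'$, and $\lambda$, one must confirm that the $A_{2}^{\ast}$-valued parts of \eqref{3.2.1}--\eqref{3.2.10} are genuinely implied by \eqref{3.4.1}--\eqref{3.4.18} rather than imposing fresh constraints; verifying this consistency, so that the list of eighteen conditions is seen to be both necessary and sufficient, is the delicate part of the argument.
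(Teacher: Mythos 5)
Your overall architecture coincides with the paper's: specialize Lemma \ref{de} via exactly the identifications you wrote (with $\epsilon=\lambda+\gamma$, $\eta$ built from $\mu, v, L^{\ast}_{\star}$ and $\eta'$ from $\mu', v', -R^{\ast}_{\circ}$), split Eqs.~\eqref{3.2.1}--\eqref{3.2.10} into components, and extract \eqref{3.4.1}--\eqref{3.4.2} and \eqref{3.4.14}--\eqref{3.4.16} from the invariance of $B(\cdot,\cdot)$; symmetry and nondegeneracy are handled exactly as you say. The genuine gap is in your middle step, where you assert that the $A_{2}^{\ast}$-components of \eqref{3.2.1}--\eqref{3.2.10} are ``automatically consistent'' because $(A_{2}^{\ast},L^{\ast}_{\star},-R^{\ast}_{\circ})$ is a representation, so that only \eqref{3.4.12}--\eqref{3.4.13} survive. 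That is false as stated: the splitting produces, besides the conditions in the theorem, a family of residual constraints coupling $v$, $v'$, $\varphi$ with $\mu$, $\mu'$, $\lambda$ (the paper's Eqs.~\eqref{eq-pf}, \eqref{pf1}--\eqref{pf5} and three unnumbered ones), and these are by no means vacuous. Your sorting of where the eighteen conditions come from is also off: given the invariance relations, the $A_{2}^{\ast}$-component \eqref{pf1} of \eqref{3.2.2} is \emph{equivalent} to \eqref{3.4.11}, and the $A_{1}$-component \eqref{pf2} of \eqref{3.2.3} reduces to \eqref{3.4.10} only once \eqref{3.4.11} is in hand --- so if the $A_{2}^{\ast}$-components were discarded as automatic, you could not even derive \eqref{3.4.10}--\eqref{3.4.11} in the necessity direction. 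Moreover, your appeal to Lemma \ref{th2} presupposes that $\varphi$ satisfies \eqref{cent-1}--\eqref{cent-2}, which are not in the list \eqref{3.4.1}--\eqref{3.4.18} either and must be proved from it.

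The mechanism that closes all of this --- and it constitutes the bulk of the paper's proof, not a routine consistency check --- is that \eqref{3.4.1}--\eqref{3.4.2} and \eqref{3.4.15}--\eqref{3.4.16} pin down $\varphi$, $v'$ and the adjoint behavior of $\mu'$ dually: one first derives $\varphi(x_{1},y_{1})(z_{2})=B_{1}(\mu(z_{2})y_{1},x_{1})$, $v'(x_{1},x_{2})(y_{2})=B_{1}(x_{1},\lambda(y_{2},x_{2}))$ and $B_{1}(\mu'(x_{2})y_{1},z_{1})=B_{1}(y_{1},\mu'(x_{2})z_{1})$ (the paper's \eqref{eq-p3}, \eqref{eq-p1}, \eqref{eq-p2}), then evaluates every residual constraint on an arbitrary $a\in A_{2}$ and verifies it term by term against \eqref{3.4.3}--\eqref{3.4.4}, \eqref{3.4.6}, \eqref{3.4.8}, \eqref{3.4.10}--\eqref{3.4.11}, \eqref{3.4.17}--\eqref{3.4.18}, together with the derived commutator identity \eqref{eqq}; the hypotheses \eqref{cent-1}--\eqref{cent-2} of Lemma \ref{th2} are checked in the same way. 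You do flag this verification as ``the delicate part'' in your final paragraph, which is the right instinct, but flagging it does not discharge it, and the heuristic you offer for why it should be automatic (the dual-representation property) is not the reason it works. Until those computations are carried out, the equivalence with exactly the eighteen conditions \eqref{3.4.1}--\eqref{3.4.18} remains unproved in both directions.
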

\begin{proof}
	By Lemma \ref{de}, $(A_{2}\oplus A_{1}\oplus A_{2}^{\ast}, \cdot)$ is a Novikov algebra if and only if  Eqs. (\ref{cent-1})-(\ref{3.2.10}) hold, where
	\begin{align*}
		\epsilon(x_{2},y_{2}) &= \lambda(x_{2},y_{2}) + \gamma(x_{2},y_{2}),\\
		\eta(x_{2})(y_{1} + g) &= \mu(x_{2})y_{1} + v(x_{2},y_{1}) + L^{\ast}_{\star}(x_{2})g, \\
		\eta^{\prime}(x_{2})(y_{1} + g) &= \mu^{\prime}(x_{2})y_{1} + v^{\prime}(y_{1},x_{2}) - R^{\ast}_{\circ}(x_{2})g,\;x_2, y_2\in A_2,\;x_1,y_1\in A_1,\;f, g\in A_2^\ast.
	\end{align*}
	
Note that
	\begin{align*}
		&\eta(x_{2})((x_{1} + f)\ast(y_{1} + g)) = \eta(x_{2})(x_{1}\circ y_{1} + \varphi(x_{1},y_{1})) \\=& \mu(x_{2})(x_{1}\circ y_{1}) + v(x_{2},x_{1}\circ y_{1}) + L^{\ast}_{\star}(x_{2})\varphi(x_{1},y_{1}),\end{align*}
and
\begin{align*}
		&(\eta(x_{2})(x_{1} + f) - \eta^{\prime}(x_{2})(x_{1} + f))\ast(y_{1} + g) + (x_{1} + f)\ast(\eta(x_{2})(y_{1} + g)) \\=& (\mu(x_{2})x_{1} + v(x_{2},x_{1}) + L^{\ast}_{\star}(x_{2})f - \mu^{\prime}(x_{2})x_{1} - v^{\prime}(x_{1},x_{2}) + R^{\ast}_{\circ}(x_{2})f)\ast(y_{1} + g) \\&+ (x_{1} + f)\ast(\mu(x_{2})y_{1} + v(x_{2},y_{1}) + L^{\ast}_{\star}(x_{2})g) \\=& (\mu(x_{2})x_{1} - \mu^{\prime}(x_{2})x_{1})\circ y_{1} + \varphi(\mu(x_{2})x_{1} - \mu^{\prime}(x_{2})x_{1},y_{1}) + x_{1}\circ \mu(x_{2})y_{1} + \varphi(x_{1},\mu(x_{2})y_{1}).
	\end{align*}
	Therefore, Eq. (\ref{3.2.1}) holds if and only if Eq. (\ref{3.4.6}) and the following equality hold:
	\begin{eqnarray}\label{eq-pf}
		v(x_{2},x_{1}\circ y_{1}) + L^{\ast}_{\star}(x_{2})\varphi(x_{1},y_{1}) = \varphi(\mu(x_{2})x_{1} - \mu^{\prime}(x_{2})x_{1},y_{1}) + \varphi(x_{1},\mu(x_{2})y_{1}).
	\end{eqnarray}

 Similarly, by the fact that $(A_2^\ast, L_\star^\ast, -R_\circ^\ast)$ is a representation of $(A_2, \circ)$, it is direct to check that Eq. (\ref{3.2.2}) holds if and only if Eq. (\ref{3.4.5}) and the following equality hold:
	\begin{align}\label{pf1}
		&\varphi(x_{1},\mu^{\prime}(x_{2})y_{1}) - \varphi(y_{1},\mu^{\prime}(x_{2})x_{1}) \\ \nonumber=&
		v^{\prime}(x_{1}\circ y_{1},x_{2}) - R^{\ast}_{\circ}(x_{2})\varphi(x_{1},y_{1}) - v^{\prime}(y_{1}\circ x_{1},x_{2}) + R^{\ast}_{\circ}(x_{2})\varphi(y_{1},x_{1});
	\end{align}
Eq. (\ref{3.2.3}) holds if and only if the following equalities hold:
	\begin{align}
		\label{pf2}\mu^{\prime}(x_{2}\circ y_{2})x_{1} =& \mu^{\prime}(y_{2})(\mu^{\prime}(x_{2})x_{1} - \mu(x_{2})x_{1}) +  \mu(x_{2})\mu^{\prime}(y_{2})x_{1} - x_{1}\circ \lambda(x_{2},y_{2}),\\
		\label{pf3}v^{\prime}(x_{1},x_{2}\circ y_{2}) =& v^{\prime}(\mu^{\prime}(x_{2})x_{1} - \mu(x_{2})x_{1},y_{2}) - R^{\ast}_{\circ}(y_{2})(v^{\prime}(x_{1},x_{2}) - v(x_{2},x_{1})) \\ \nonumber&+ v(x_{2},\mu^{\prime}(y_{2})x_{1}) + L^{\ast}_{\star}(x_{2})(v^{\prime}(x_{1},y_{2})) - \varphi(x_{1},\lambda(x_{2},y_{2}));
	\end{align}
Eq. (\ref{3.2.4}) holds if and only if Eq. (\ref{3.4.9}) and  the following equality hold:
	\begin{align}\label{pf4}
		v(x_{2}\circ y_{2} - y_{2}\circ x_{2},x_{1}) =& v(x_{2},\mu(y_{2})x_{1}) + L^{\ast}_{\star}(x_{2})(v(y_{2},x_{1})) - v(y_{2},\mu(x_{2})x_{1}) \\ \nonumber& - L^{\ast}_{\star}(y_{2})(v(x_{2},x_{1})) - \varphi(\lambda(x_{2},y_{2}),x_{1}) + \varphi(\lambda(y_{2},x_{2}),x_{1});
	\end{align}	
Eq. (\ref{3.2.5}) holds if and only if Eqs. (\ref{3.4.18}) and (\ref{3.4.13}) hold; Eq. (\ref{3.2.6}) holds if and only if Eq. (\ref{3.4.3}) and the following equality hold:\begin{align}
		\varphi(\mu(x_{2})x_{1},y_{1}) = \varphi(\mu(x_{2})y_{1},x_{1});
	\end{align}
Eq. (\ref{3.2.7}) holds if and only if Eq. (\ref{3.4.4}) and  the following equality hold:
	\begin{align}
		\varphi(\mu^{\prime}(x_{2})x_{1},y_{1}) = v^{\prime}(x_{1}\circ y_{1},x_{2}) - R^{\ast}_{\circ}(x_{2})\varphi(x_{1},y_{1});
	\end{align}	
Eq. (\ref{3.2.8}) holds if and only if Eq. (\ref{3.4.7}) and the following equality hold:
	\begin{align}
		v^{\prime}(\mu^{\prime}(y_{2})x_{1},x_{2}) - R^{\ast}_{\circ}(x_{2})v^{\prime}(x_{1},y_{2}) = v^{\prime}(\mu^{\prime}(x_{2})x_{1},y_{2}) - R^{\ast}_{\circ}(y_{2})v^{\prime}(x_{1},x_{2});
\end{align}
Eq. (\ref{3.2.9}) holds if and only if Eq. (\ref{3.4.8}) and the following equality hold:
	\begin{align}
		\label{pf5}v^{\prime}(\mu(x_{2})x_{1},y_{2}) - R^{\ast}_{\circ}(y_{2})v(x_{2},x_{1}) = \varphi(\lambda(x_{2},y_{2}),x_{1}) + v(x_{2}\circ y_{2},x_{1});
	\end{align}
Eq. (\ref{3.2.10}) holds if and only if Eqs. (\ref{3.4.17}) and (\ref{3.4.12}) holds.

	By the definition of $B(\cdot,\cdot)$, we have
	\begin{align*}
		&B((x_{2} + x_{1} + f)\cdot(y_{2} + y_{1} + g),z_{2} + z_{1} + h) \\=& B(x_{2}\circ y_{2} + \lambda(x_{2},y_{2}) + \gamma(x_{2},y_{2}) + x_{1}\circ y_{1} + \varphi(x_{1},y_{1}) + \mu(x_{2})y_{1} + v(x_{2},y_{1}) + L^{\ast}_{\star}(x_{2})g \\&+ \mu^{\prime}(y_{2})x_{1} + v^{\prime}(x_{1},y_{2}) - R^{\ast}_{\circ}(y_{2})f, z_{2} + z_{1} + h) \\= & \tau(x_{2}\circ y_{2},z_{2}) + B_{1}(\lambda(x_{2},y_{2}) + x_{1}\circ y_{1} + \mu(x_{2})y_{1} + \mu^{\prime}(y_{2})x_{1},z_{1}) + (\gamma(x_{2},y_{2}) + \varphi(x_{1},y_{1}) + v(x_{2},y_{1}) \\&+ L^{\ast}_{\star}(x_{2})g + v^{\prime}(x_{1},y_{2}) - R^{\ast}_{\circ}(y_{2})f)(z_{2}) + h(x_{2}\circ y_{2}),
\end{align*}
and
\begin{align*}
		&-B(y_{2} + y_{1} + g,(x_{2} + x_{1} + f)\star (z_{2} + z_{1} + h)) \\=& - B(y_{2} + y_{1} + g , x_{2}\circ z_{2} + \lambda(x_{2},z_{2}) + \gamma(x_{2},z_{2}) + x_{1}\circ z_{1} + \varphi(x_{1},z_{1}) + \mu(x_{2})z_{1} + v(x_{2},z_{1}) + L^{\ast}_{\star}(x_{2})h \\&+ \mu^{\prime}(z_{2})x_{1} + v^{\prime}(x_{1},z_{2}) - R^{\ast}_{\circ}(z_{2})f+z_{2}\circ x_{2} + \lambda(z_{2},x_{2}) + \gamma(z_{2},x_{2}) + z_{1}\circ x_{1} + \varphi(z_{1},x_{1}) \\
&+ \mu(z_{2})x_{1} + v(z_{2},x_{1}) + L^{\ast}_{\star}(z_{2})f + \mu^{\prime}(x_{2})z_{1} + v^{\prime}(z_{1},x_{2}) - R^{\ast}_{\circ}(x_{2})h) \\=& - \tau(y_{2},x_{2}\circ z_{2}) - B_{1}(y_{1} , \lambda(x_{2},z_{2}) + x_{1}\circ z_{1} + \mu(x_{2})z_{1} + \mu^{\prime}(z_{2})x_{1}) - g(x_{2}\circ z_{2}) - (\gamma(x_{2},z_{2}) + \varphi(x_{1},z_{1}) \\&+ v(x_{2},z_{1}) + L^{\ast}_{\star}(x_{2})h +v ^{\prime}(x_{1},z_{2}) - R^{\ast}_{\circ}(z_{2})f)(y_{2})- \tau(y_{2},z_{2}\circ x_{2}) \\
&-B_{1}(y_{1} , \lambda(z_{2},x_{2}) + z_{1}\circ x_{1} + \mu(z_{2})x_{1} + \mu^{\prime}(x_{2})z_{1}) - g(z_{2}\circ x_{2}) \\
&- (\gamma(z_{2},x_{2}) + \varphi(z_{1},x_{1})+ v(z_{2},x_{1}) + L^{\ast}_{\star}(z_{2})f + v^{\prime}(z_{1},x_{2}) - R^{\ast}_{\circ}(x_{2})h)(y_{2}).
	\end{align*}
Therefore we obtain that $B(\cdot,\cdot)$ satisfies the invariant property if and only if Eqs. (\ref{3.4.1})-(\ref{3.4.2}), Eqs. (\ref{3.4.14})-(\ref{3.4.16}) and the following equalities hold for all $x_1$, $y_1$, $z_1\in A_1$ and $x_2$, $y_2$, $z_2\in A_2$:
	\begin{align}
		\label{th.2.1} B_{1}(\mu^{\prime}(y_{2})x_{1},z_{1}) &= -\varphi(x_{1},z_{1})y_{2} - \varphi(z_{1},x_{1})y_{2},\\
		\label{th.2.2}v^{\prime}(x_{1},y_{2})z_{2} &= -v^{\prime}(x_{1},z_{2})(y_{2}) - v^{\prime}(z_{2},x_{1})(y_{2}).
	\end{align}
By Eqs. (\ref{3.4.1})-(\ref{3.4.2}) and Eqs. (\ref{3.4.15})-(\ref{3.4.16}), it is easy to check that Eqs. (\ref{th.2.1}) and (\ref{th.2.2}) hold.
Therefore, $B(\cdot,\cdot)$ satisfies the invariant property if and only if Eqs. (\ref{3.4.1})-(\ref{3.4.2}) and  (\ref{3.4.14})-(\ref{3.4.16}) hold.

By Eqs. (\ref{3.4.1})-(\ref{3.4.2}) and (\ref{3.4.15})-(\ref{3.4.16}), we have
\begin{align}
	\label{eq-p3}\varphi(x_{1},y_{1})(z_{2}) =& -B_{1}(y_{1},\mu^{\prime}(z_{2})(x_{1}) + \mu(z_{2})(x_{1})) = B_{1}(\mu(z_{2})y_{1},x_{1}),\\
	\label{eq-p1}v^{\prime}(x_{1},x_{2})(y_{2}) =& -v(x_{2},x_{1})(y_{2}) - B_{1}(\lambda(x_{2},y_{2}),x_{1}) \\=& B_{1}(x_{1},\lambda(x_{2},y_{2}) + \lambda(y_{2},x_{2})) - B_{1}(\lambda(x_{2},y_{2}),x_{1}) \nonumber\\=& B_{1}(x_{1},\lambda(y_{2},x_{2})),\nonumber\\
	\label{eq-p2}B_{1}(\mu^{\prime}(x_{2})y_{1},z_{1}) =& - B_{1}(\mu(x_{2})y_{1},z_{1}) - B_{1}(y_{1},\mu(x_{2})z_{1})\\=&  B_{1}(y_{1},\mu(x_{2})z_{1} + \mu^{\prime}(x_{2})z_{1}) - B_{1}(y_{1},\mu(x_{2})z_{1}) \nonumber\\=& B_{1}(y_{1},\mu^{\prime}(x_{2})z_{1}).\nonumber
\end{align}	
By Eqs. (\ref{3.4.3}), (\ref{3.4.4}), (\ref{3.4.6}), (\ref{eq-p3}) and (\ref{eq-p2}), for all $a\in A_2$, we obtain that
\begin{align*}
	\varphi(x_{1}\circ y_{1},z_{1})(a) &= B_{1}(\mu(a)z_{1},x_{1}\circ y_{1})  = B_{1}((\mu(a)z_{1})\circ y_{1},x_{1}) \\ &= B_{1}((\mu(a)y_{1})\circ z_{1},x_{1}) = B_{1}(x_{1}\circ z_{1},\mu(a)y_{1}) \\&= \varphi(x_{1}\circ z_{1},y_{1})(a),
\end{align*}
and
\begin{align*}
	&\varphi(x_{1}\circ y_{1},z_{1})(a) - \varphi(x_{1},y_{1}\circ z_{1})(a) \\=& B_{1}(\mu(a)z_{1},x_{1}\circ y_{1}) - B_{1}(\mu(a)(y_{1}\circ z_{1}),x_{1}) \\=& B_{1}((\mu(a)z_{1})\circ y_{1},x_{1}) - B_{1}((\mu(a)y_{1})\circ z_{1} - (\mu^{\prime}(a)y_{1})\circ z_{1} + y_{1}\circ \mu(a)z_{1},x_{1}) \\=& B_{1}((\mu^{\prime}(a)y_{1})\circ z_{1} - y_{1}\circ (\mu(a)z_{1}),x_{1}) = B_{1}(\mu^{\prime}(a)(y_{1}\circ z_{1}),x_{1}) - B_{1}(x_{1}\circ (\mu(a)z_{1}),y_{1})\\=& B_{1}(y_{1}\circ z_{1},\mu^{\prime}(a)x_{1}) - B_{1}(x_{1}\circ (\mu(a)z_{1}),y_{1}) = B_{1}((\mu^{\prime}(a)x_{1})\circ z_{1},y_{1}) - B_{1}(x_{1}\circ (\mu(a)z_{1}),y_{1}) \\=& B_{1}(y_{1},(\mu(a)z_{1})\circ x_{1}) - B_{1}((\mu(a)x_{1})\circ z_{1} - (\mu^{\prime}(a)x_{1})\circ z_{1} + x_{1}\circ (\mu(a)z_{1}),y_{1}) \\=& \varphi(y_{1}\circ x_{1},z_{1})(a) - \varphi(y_{1},x_{1}\circ z_{1})(a).
\end{align*}
Therefore Eqs. (\ref{cent-1}) and (\ref{cent-2}) hold.

\delete{Therefore, by the discussion above, for finishing the proof, we only need to check that if Eqs. (\ref{3.4.1})-(\ref{3.4.18}) hold, then Eqs. (\ref{eq-pf})-(\ref{pf5}) naturally hold.}

By Eqs. (\ref{3.4.1})-(\ref{3.4.2}),  (\ref{3.4.8}), (\ref{3.4.15})-(\ref{3.4.16}) and (\ref{eq-p1}), for all $a\in A_2$, we obtain
\begin{align*}
	&(\varphi(x_{1},\mu^{\prime}(x_{2})y_{1}) - \varphi(y_{1},\mu^{\prime}(x_{2})x_{1}))(a) \\ =& -B_{1}(\mu^{\prime}(x_{2})y_{1},\mu(a)x_{1} + \mu^{\prime}(a)x_{1}) + B_{1}(\mu^{\prime}(x_{2})x_{1},\mu(a)y_{1} + \mu^{\prime}(a)y_{1}) \\ =& -B_{1}(y_{1},\mu^{\prime}(x_{2})(\mu(a)x_{1} + \mu^{\prime}(a)x_{1})) - B_{1}(y_{1},\mu(a)\mu^{\prime}(x_{2})x_{1})\\ =& -B_{1}(y_{1},\mu^{\prime}(x_{2})\mu(a)x_{1} + \mu^{\prime}(x_{2})\mu^{\prime}(a)x_{1} + \mu^{\prime}(x_{2})\mu(a)x_{1}),
\end{align*}
and
\begin{align*}
	&(v^{\prime}(x_{1}\circ y_{1},x_{2}) - R^{\ast}_{\circ}(x_{2})\varphi(x_{1},y_{1}) - v^{\prime}(y_{1}\circ x_{1},x_{2}) + R^{\ast}_{\circ}(x_{2})\varphi(y_{1},x_{1}))(a)\\
	= & B_{1}(x_{1}\circ y_{1},\lambda(a,x_{2})) - B_{1}(y_{1},\mu(a\circ x_{2})x_{1} + \mu^{\prime}(a\circ x_{2})x_{1}) - B_{1}(y_{1}\circ x_{1},\lambda(a,x_{2})) \\&- B(x_{1},\mu(a\circ x_{2})y_{1} + \mu^{\prime}(a\circ x_{2})y_{1})\\
	=& -B_{1}(y_{1},x_{1}\star\lambda(a,x_{2}) + \mu(a\circ x_{2})x_{1} + \mu^{\prime}(a\circ x_{2})x_{1} + \lambda(a,x_{2})\circ x_{1} + \mu(a\circ x_{2})x_{1}) \\ =& -B_{1}(y_{1},x_{1}\circ\lambda(a,x_{2}) + \mu^{\prime}(a\circ x_{2})x_{1} + 2\mu^{\prime}(x_{2})\mu(a)x_{1} ).
\end{align*}
Then Eq. (\ref{pf1}) holds if and only if Eq. (\ref{3.4.11}) holds. Similarly by Eq. (\ref{3.4.11}), one can check that Eq. (\ref{pf2}) holds if and only if Eq. (\ref{3.4.10}) holds.

By Eqs. (\ref{3.4.8}) and (\ref{3.4.9}), we have
\begin{eqnarray}
\label{eqq}(\mu(x_2)\mu(y_2)-\mu(y_2)\mu(x_2))x_1&=&(\mu(x_2\circ y_2)-\mu(y_2\circ x_2))x_1+(\lambda(x_2,y_2)-\lambda(y_2,x_2))\circ x_1\\
&=&\mu^{\prime}(y_2)\mu(x_2)x_1-\mu^{\prime}(x_2)\mu(y_2)x_1,\;\;x_1\in A_1,\;x_2,y_2\in A_2.\nonumber
\end{eqnarray}

By Eqs. (\ref{def-eq1}), (\ref{3.4.1}), (\ref{3.4.2}), (\ref{3.4.8}), (\ref{3.4.10}),  (\ref{3.4.11}), (\ref{3.4.15}) and (\ref{eqq}), for all $a\in A_2$, we have
	\begin{align*}
		&(v(x_{2},x_{1}\circ y_{1}) + L^{\ast}_{\star}(x_{2})\varphi(x_{1},y_{1}))(a) \\=& -B_{1}(x_{1}\circ y_{1},\lambda(a,x_{2}) + \lambda(x_{2},a)) - B_{1}(y_{1},\mu(-a\star x_{2})x_{1} + \mu^{\prime}(-a\star x_{2})x_{1}) \\=& B_{1}(y_{1},x_{1}\star(\lambda(a,x_{2}) + \lambda(x_{2},a))) + B_{1}(y_{1},\mu(a\star x_{2})x_{1} + \mu^{\prime}(a\star x_{2})x_{1}) \\=& B_{1}(y_{1},\mu^{\prime}(a)\mu^{\prime}(x_{2})x_{1} + \mu^{\prime}(x_{2})\mu^{\prime}(a)x_{1} + \mu^{\prime}(x_{2})\mu(a)x_{1} +\mu^{\prime}(a)\mu(x_{2})x_{1}) \\=& B_{1}(y_{1},\mu^{\prime}(a)\mu^{\prime}(x_{2})x_{1} + \mu^{\prime}(x_{2})\mu^{\prime}(a)x_{1} + \mu(x_{2})\mu^{\prime}(a)x_{1} +\mu^{\prime}(x_{2})\mu(a)x_{1} \\&- \mu(a)\mu(x_{2})x_{1} - \mu^{\prime}(a)\mu(x_{2})x_{1} + \mu(x_{2})\mu(a)x_{1} + \mu^{\prime}(x_{2})\mu(a)x_{1}) \\=& (\varphi(\mu(x_{2})x_{1} - \mu^{\prime}(x_{2})x_{1},y_{1}) + \varphi(x_{1},\mu(x_{2})y_{1}))(a).
	\end{align*}
Therefore, Eq. (\ref{eq-pf}) naturally holds.

By Eqs. (\ref{3.4.1}), (\ref{3.4.2}), (\ref{3.4.15}), (\ref{3.4.17}), (\ref{3.4.18}), (\ref{eq-p1}) and (\ref{eq-p2}), for all $a\in A_2$, we obtain
	\begin{align*}
		&(v^{\prime}(\mu^{\prime}(x_{2})x_{1} - \mu(x_{2})x_{1},y_{2}) - R^{\ast}_{\circ}(y_{2})(v^{\prime}(x_{1},x_{2}) - v(x_{2},x_{1})) + v(x_{2},\mu^{\prime}(y_{2})x_{1}) + L^{\ast}_{\star}(x_{2})v^{\prime}(x_{1},y_{2}) \\&- \varphi(x_{1},\lambda(x_{2},y_{2})))(a) \\=& B_{1}(\mu^{\prime}(x_{2})x_{1} - \mu(x_{2})x_{1},\lambda(a,y_{2})) + B_{1}(x_{1},\lambda(a\circ y_{2},x_{2}) + \lambda(a\circ  y_{2},x_{2}) + \lambda(x_{2},a\circ y_{2})) \\&- B_{1}(\mu^{\prime}(y_{2})x_{1},\lambda(a,x_{2}) + \lambda(x_{2},a)) + B_{1}(x_{1},\lambda(-a\star x_{2},y_{2})) + B_{1}(\lambda(x_{2},y_{2}),\mu(a)x_{1} + \mu^{\prime}(a)x_{1}) \\=& B_{1}(x_{1},\mu^{\prime}(x_{2})\lambda(a,x_{2}) + \mu(x_{2})\lambda(a,x_{2}) + \mu^{\prime}(x_{2})\lambda(a,y_{2}) + \lambda(a\circ y_{2},x_{2}) + \lambda(a\circ y_{2},x_{2}) + \lambda(x_{2},a\circ y_{2})) \\& -B_{1}(x_{1},\mu^{\prime}(y_{2})\lambda(a,x_{2}) + \mu^{\prime}(y_{2})\lambda(x_{2},a) + \lambda(a\star x_{2},y_{2})) + \mu(a)\lambda(x_{2},y_{2}))\\ =& B_{1}(x_{1},\lambda(a,x_{2}\circ y_{2})) = v^{\prime}(x_{1},x_{2}\circ y_{2})(a).
	\end{align*}
	Hence (\ref{pf3}) holds. Similarly, it is direct to check that
	Eqs. (\ref{pf4}) - (\ref{pf5}) hold.
	
	Since $B_{1}(\cdot,\cdot)$ and $\tau(\cdot,\cdot)$ is symmetric, $B(\cdot,\cdot)$ is symmetric. Assume that $x_{2} + x_{1} + f \in (A_{2}\oplus A_{1}\oplus A_{2}^{\ast})^{\perp}$. Then we have $B(x_{2} + x_{1} + f,y_{1}) = B_{1}(x_{1},y_{1}) = 0$ for all $y_{1} \in A_{1}$. Thus $x_{1} = 0$. Then $B(x_{2} + f,g) = g(x_{2}) = 0$ for all $g \in A_{2}^{\ast}$. Hence we get $x_{2} = 0$. Since $B(f,y_{2}) = f(y_{2}) = 0$ for all $y_{2} \in A_{2}$, we obtain $f = 0$. Thus $(A_{2}\oplus A_{1}\oplus A_{2}^{\ast})^{\perp} = {0}$, i.e. $B(\cdot,\cdot)$ is nondegenerate.

Therefore, by the discussion above, we obtain that $(A_{2}\oplus A_{1}\oplus A_{2}^{\ast},\cdot,B(\cdot,\cdot))$ is a quadratic Novikov algebra if and only if Eqs. (\ref{3.4.1})-(\ref{3.4.18}) hold. Then the proof is completed.
\end{proof}
\begin{rmk}
	$(A_{2}\oplus A_{1}\oplus A_{2}^{\ast},\cdot,B(\cdot,\cdot))$ in Theorem \ref{th1} is called a {\bf double extension} of $(A_{1},\circ, B_1(\cdot,\cdot))$ by a Novikov algebra $(A_{2},\circ)$ with a symmetric bilinear form $\tau(\cdot,\cdot)$. \delete{If $\tau=0$, for convenience, we call that this quadratic Novikov algebra is a {\bf double extension} of $(A_{1},\circ, B_1(\cdot,\cdot))$ by a Novikov algebra $(A_{2},\circ)$ and } We denote it by $(A_{2}\oplus A_{1}\oplus A_{2}^{\ast}, B_{1}(\cdot,\cdot),\tau(\cdot,\cdot),\varphi,\mu,\mu^{\prime},v,v^{\prime},\lambda,\gamma)$.
\end{rmk}

\begin{cor}
	Let $(A_{2},\circ)$ be a Novikov algebra with a symmetric bilinear form $\tau(\cdot,\cdot)$ and $\gamma:A_{2}\times A_{2} \rightarrow A_{2}^{\ast}$ be a bilinear map. Then $(A_{2} \oplus A_{2}^{\ast},\diamond,\tilde{B}(\cdot,\cdot))$ is a quadratic Novikov algebra given by
	\begin{align*}
		(x_{2} + f)\diamond(y_{2} + g) &= x_{2}\circ y_{2} + \gamma(x_{2},y_{2}) + L^{\ast}_{\star}(x_{2})g - R^{\ast}_{\circ}(y_{2})f,\\
		\tilde{B}(x_{2} + f,y_{2} + g) &= \tau(x_{2},y_{2}) + f(y_{2}) \hspace{2pt}+\hspace{2pt} g(x_{2}),\;\; x_2, y_2\in A_2,\;\; f, g\in A_2^\ast,
	\end{align*}
	if and only if $\gamma$ satisfies
	\begin{eqnarray}		
&&\gamma(x_{2}\circ y_{2},z_{2}) - R^{\ast}_{\circ}(z_{2})\gamma(x_{2},y_{2}) = \gamma(x_{2}\circ z_{2},y_{2}) - R^{\ast}_{\circ}(y_{2})\gamma(x_{2},z_{2}),\\
&&\gamma(x_{2}\circ y_{2},z_{2}) - \gamma(x_{2},y_{2}\circ z_{2}) - \gamma(y_{2}\circ x_{2},z_{2})+ \gamma(y_{2},x_{2}\circ z_{2}) \\&&\quad= R^{\ast}_{\circ}(z_{2})(\gamma(x_{2},y_{2}) - \gamma(y_{2},x_{2})) +  L^{\ast}_{\star}(x_{2})\gamma(y_{2},z_{2}) - L^{\ast}_{\star}(y_{2})\gamma(x_{2},z_{2}), \nonumber\\
&&\tau(x_{2}\circ y_{2},z_{2}) + \gamma(x_{2},y_{2})(z_{2}) = - \tau(y_{2},x_{2}\star z_{2}) -\gamma(x_{2},z_{2})(y_{2}) - \gamma(z_{2},x_{2})(y_{2}),\;\; x_{2},y_{2},z_{2} \in A_{2}.
	\end{eqnarray}
\end{cor}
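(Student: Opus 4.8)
The plan is to derive this corollary directly from Theorem \ref{th1} by specializing the double extension construction to the case in which $(A_{1},\circ,B_{1}(\cdot,\cdot))$ is the trivial quadratic Novikov algebra on the zero space, i.e. $A_{1}=\{0\}$ and $B_{1}=0$. This is a bona fide quadratic Novikov algebra, since on the zero space the zero form is vacuously symmetric and nondegenerate ($A_{1}^{\perp}=A_{1}=0$). First I would record that this choice forces every structure map touching $A_{1}$ to vanish: because $\text{End}_{\bf k}(A_{1})=0$ we have $\mu=\mu^{\prime}=0$, and because $A_{1}$ occurs as a factor of the source or the target we have $\varphi=0$, $v=0$, $v^{\prime}=0$ and $\lambda=0$. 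The only data that survive are the bilinear map $\gamma:A_{2}\times A_{2}\to A_{2}^{\ast}$ and the symmetric form $\tau$.

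Next I would substitute $x_{1}=y_{1}=0$ together with these vanishing maps into the product and bilinear form of Theorem \ref{th1}. In the product, the summands $\lambda(x_{2},y_{2})$, $\varphi(x_{1},y_{1})$, $\mu(x_{2})(y_{1})$, $v(x_{2},y_{1})$, $\mu^{\prime}(y_{2})(x_{1})$ and $v^{\prime}(x_{1},y_{2})$ all drop out, leaving exactly $x_{2}\circ y_{2}+\gamma(x_{2},y_{2})+L^{\ast}_{\star}(x_{2})g-R^{\ast}_{\circ}(y_{2})f$, which is the operation $\diamond$ in the statement; similarly the form collapses to $\tau(x_{2},y_{2})+f(y_{2})+g(x_{2})=\tilde{B}(x_{2}+f,y_{2}+g)$, since the $B_{1}(x_{1},y_{1})$ term vanishes. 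Thus the object built here is precisely $(A_{2}\oplus A_{2}^{\ast},\diamond,\tilde{B}(\cdot,\cdot))$.

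It then remains to track how the conditions \eqref{3.4.1}--\eqref{3.4.18} of Theorem \ref{th1} collapse under the specialization. Conditions \eqref{3.4.1}--\eqref{3.4.11} and \eqref{3.4.15}--\eqref{3.4.18} each reduce to the trivial identity $0=0$, because every term in them factors through $A_{1}$, through $\text{End}_{\bf k}(A_{1})$, or through one of $\varphi,v,v^{\prime},\lambda,\mu,\mu^{\prime}$. In \eqref{3.4.12} the two occurrences $v^{\prime}(\lambda(x_{2},y_{2}),z_{2})$ and $v^{\prime}(\lambda(x_{2},z_{2}),y_{2})$ vanish because $\lambda=0$, so \eqref{3.4.12} becomes the first displayed condition; in \eqref{3.4.13} every term carrying $v$, $v^{\prime}$ or $\lambda$ disappears, leaving the second displayed condition; and \eqref{3.4.14} involves only $\tau$ and $\gamma$, so it survives verbatim as the third displayed condition. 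Applying the equivalence in Theorem \ref{th1} then yields the corollary at once.

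The calculations here are entirely routine; the only point demanding care---and thus the step I would double-check most carefully---is the exhaustive, term-by-term verification that none of the discarded conditions \eqref{3.4.1}--\eqref{3.4.11}, \eqref{3.4.15}--\eqref{3.4.18} hides a surviving constraint on $\gamma$ or $\tau$. Since each such term genuinely contains a factor that becomes zero after the specialization, no information is lost, and the three conditions listed in the corollary are exactly the reduced form of the full system of Theorem \ref{th1}.
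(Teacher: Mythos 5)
Your proposal is correct and coincides with the paper's own proof, which simply observes that the corollary follows from Theorem \ref{th1} by setting $A_1=0$; your term-by-term verification that $\mu,\mu',\varphi,v,v',\lambda$ vanish and that only conditions \eqref{3.4.12}, \eqref{3.4.13}, \eqref{3.4.14} survive is exactly the routine check the paper leaves implicit.
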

\begin{proof}
	It follows from  Theorem \ref{th1} directly by letting $A_1=0$.
\end{proof}
\delete{\begin{rmk}
	$(A_{2} \oplus A_{2}^{\ast},\diamond,\tilde{B}(\cdot,\cdot))$ is called the $T^{\ast}$ extension of $(A_{2},\circ)$.
\end{rmk}}
 \begin{cor}\label{cor-com-dim1}
	 	Let $(A_{2}={\bf k}e,\circ)$ be the Novikov algebra of dimension $1$ defined by $e\circ e= ke$ for some $k\in {\bf k}$ and $A_{2}^{\ast}={\bf k}e^\ast$ where $e^\ast(e)=1$. Let $(A_{1},\circ,B_{1}(\cdot,\cdot))$ be a quadratic Novikov algebra, $h(\cdot,\cdot):A_{1}\times A_{1}\rightarrow {\bf k}$ be a bilinear map, $f,g: A_{1}\rightarrow {\bf k}$, $Q_1$, $Q_2:A_1\rightarrow A_1$ be linear maps, $\alpha\in A_1$, and $t$, $s\in {\bf k}$. Then
$(A_{2}\oplus A_{1}\oplus A_{2}^{\ast}, \cdot, B(\cdot,\cdot))$ is a quadratic Novikov algebra where the binary operation $\cdot$ and the bilinear form $B(\cdot,\cdot)$ are defined by
	 	\begin{align*}
	 		(k_1e + x_{1} + l_1e^{\ast})\cdot(k_2e + y_{1} + l_2e^{\ast}) &= k_1k_2ke + k_1k_2\alpha + x_{1}\circ y_{1}+k_1Q_{1}(y_{1})+k_2 Q_2(x_1)\\
&=+(k_1k_2s+h(x_1,y_1)+k_1f(y_1)-2k_1l_2k+g(x_1)k_2+k_2l_1k)e^\ast,\\
	 		B_1(k_1e + x_{1} + l_1e^{\ast},k_2e + y_{1} + l_2e^{\ast}) &= k_1k_2t + B_{1}(x_{1},y_{1}) + l_1k_2+k_1l_2,\;\;k_1, k_2, l_1, l_2\in {\bf k},
	 	\end{align*} if and only if
	 	\begin{align*}
            &kt + s= 0,\\
	 		&h(x_{1},y_{1})= -B_{1}(y_{1},Q_{1}(x_{1})+ Q_{2}(x_{1})), \\
	 		&B_1(Q_{1}(x_{1}),y_{1}) = -B_{1}(x_{1},Q_{1}(y_{1}) + Q_{2}(y_{1})), \\
	 		&Q_{1}(x_{1})\circ y_{1} = Q_{1}(y_{1})\circ x_{1},\\
	 		&Q_{2}(x_{1})\circ y_{1} = Q_{2}(x_{1}\circ y_{1}),\\
	 		&Q_{2}(x_{1}\circ y_{1} - y_{1}\circ x_{1}) = x_{1}\circ Q_{2}(y_{1}) - y_{1}\circ Q_{2}(x_{1}),\\
	 		&Q_{1}(x_{1}\circ y_{1}) = (Q_{1}(x_{1}) - Q_{2}(x_{1}))\circ y_{1} + x_{1}\circ Q_{1}(y_{1}),\\
	 		&Q_{2}(Q_{1}(x_{1})) = \alpha\circ x_{1} + kQ_{1}(x_{1}),\\
	 		&Q_{1}Q_{2}= Q_{2}Q_{1},\\
	 		&Q_{2}^{2}(x_{1}) = x_{1}\circ \alpha + kQ_{2}(x_{1}),\\
   &f(x_{1}) = -2B_1(x_{1},\alpha),\\
	 		&f(x_{1}) + g(x_{1}) = -B_1(\alpha,x_{1}), \; x_{1},y_{1} \in A_{1}.
	 	\end{align*}
	 	
	 	\delete{ In particular, if $k = 0$, we get $L^{\ast}_{\star}(e)e^{\ast}(e) = -e^{\ast}(e\star e) = 0$. Then $L^{\ast}_{\star}(e)e^{\ast} = 0$. Similarily, $- R^{\ast}_{\circ}(e)e^{\ast} = 0$. Thus $A_{2}\oplus A_{1}\oplus A_{2}^{\ast}$ is the double extension of $(A_{1},\circ)$ by $A_{2}$ which is defined by
	 	 \begin{align*}
	 	 	(e + x_{1} + e^{\ast})\cdot(e + y_{1} + e^{\ast}) &= \alpha + se^{\ast} + x_{1}\circ y_{1} + \varphi(x_{1},y_{1}) + Q_{1}y_{1} + f(y_{1})e^{\ast}   + Q_{2}x_{1} + g(x_{1})e^{\ast},\\
	 	 	B(e + x_{1} + e^{\ast},e + y_{1} + e^{\ast}) &= \beta + B_{1}(x_{1},y_{1}) + 2e^{\ast}(e),
	 	 \end{align*} if and only if
	 	 \begin{align*}
	 	 	&\varphi(x_{1},y_{1})(e) = -B_{1}(y_{1},Q_{1}x_{1} + Q_{2}x_{1}), \\
	 	 	&B_1(Q_{1}x_{1},y_{1}) = -B_{1}(x_{1},Q_{1}y_{1} + Q_{2}X_{1}), \\
	 	 	&(Q_{1}x_{1})\circ y_{1} = (Q_{1}y_{1})\circ x_{1},\\
	 	 	&(Q_{2}x_{1})\circ y_{1} = Q_{2}(x_{1}\circ y_{1}),\\
	 	 	&Q_{2}(x_{1}\circ y_{1} - y_{1}\circ x_{1}) = x_{1}\circ(Q_{2}y_{1}) - y_{1}\circ(Q_{2}x_{1}),\\
	 	 	&Q_{1}(x_{1}\circ y_{1}) = (Q_{1}x_{1} - Q_{2}x_{1})\circ y_{1} + x_{1}\circ (Q_{1}y_{1}),\\
	 	 	&Q_{2}Q_{1}x_{1} = \alpha\circ x_{1} \\
	 	 	&Q_{1}Q_{2}= Q_{2}Q_{1},\\
	 	 	&(Q_{2})^{2}x_{1} = x_{1}\circ \alpha\\
	 	 	&s = l = 0,\\
	 	 	&f(x_{1})e^{\ast}(e) = -B_1(x_{1},2\alpha),\\
	 	 	&f(x_{1})e^{\ast}(e) + g(x_{1})e^{\ast}(e) = -B_1(\alpha,x_{1}).
	 	 \end{align*}}
	 \end{cor}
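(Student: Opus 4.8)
The plan is to obtain the statement as the special case of Theorem~\ref{th1} in which $A_2 = {\bf k}e$ is one-dimensional. Since every structure map in Theorem~\ref{th1} is (multi)linear in its $A_2$-slots, each is completely determined by its value on $e$ (or on $(e,e)$), and I would fix this data as
\begin{align*}
\mu(e) = Q_1,\quad \mu'(e) = Q_2,\quad \lambda(e,e) = \alpha,\quad \gamma(e,e) = s\,e^\ast,\quad \tau(e,e) = t,
\end{align*}
together with $\varphi(x_1,y_1) = h(x_1,y_1)e^\ast$, $v(e,y_1) = f(y_1)e^\ast$, $v'(x_1,e) = g(x_1)e^\ast$, the given quadratic Novikov algebra $(A_1,\circ,B_1(\cdot,\cdot))$, and $e\circ e = ke$. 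All other values are forced by bilinearity.

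First I would verify that, for this data, the operation $\cdot$ and the form $B(\cdot,\cdot)$ of Theorem~\ref{th1} reduce to the ones in the statement. The only computations that are not immediate are the two scalars
\begin{align*}
L^\ast_\star(e)e^\ast = -2k\,e^\ast, \qquad R^\ast_\circ(e)e^\ast = -k\,e^\ast,
\end{align*}
which follow from $\langle \varphi^\ast(a)f,v\rangle = -\langle f,\varphi(a)v\rangle$ together with $e\star e = 2ke$ and $e\circ e = ke$. Writing a general element as $k_1 e + x_1 + l_1 e^\ast$ and collecting the $e^\ast$-coefficient term by term then reproduces both displayed formulas exactly.

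The main content is to specialize the conditions \eqref{3.4.1}--\eqref{3.4.18} of Theorem~\ref{th1} to $A_2 = {\bf k}e$, evaluating each at the only available arguments (namely $e$, and pairs $(e,e)$). Proceeding one equation at a time: \eqref{3.4.1} gives $h(x_1,y_1) = -B_1(y_1,Q_1(x_1)+Q_2(x_1))$; \eqref{3.4.2} gives $B_1(Q_1(x_1),y_1) = -B_1(x_1,Q_1(y_1)+Q_2(y_1))$; \eqref{3.4.3}--\eqref{3.4.6} give the four structural identities relating $Q_1,Q_2$ and $\circ$; \eqref{3.4.8}, \eqref{3.4.10} and \eqref{3.4.11} yield $Q_2 Q_1(x_1) = \alpha\circ x_1 + kQ_1(x_1)$, $Q_1 Q_2 = Q_2 Q_1$ and $Q_2^2(x_1) = x_1\circ\alpha + kQ_2(x_1)$ (using $e\circ e = ke$); and the form-invariance conditions \eqref{3.4.14}, \eqref{3.4.15}, \eqref{3.4.16} collapse to $kt+s = 0$, $f(x_1) = -2B_1(x_1,\alpha)$ and $f(x_1)+g(x_1) = -B_1(\alpha,x_1)$, respectively. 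In \eqref{3.4.14} the key inputs are $\gamma(e,e)(e) = s$ and $\tau(e,e\star e) = 2kt$.

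The step that needs the most care---and the only place where one could be misled---is recognizing that the remaining six conditions \eqref{3.4.7}, \eqref{3.4.9}, \eqref{3.4.12}, \eqref{3.4.13}, \eqref{3.4.17}, \eqref{3.4.18} become vacuous. Each of these is either symmetric or antisymmetric under interchanging a pair of its $A_2$-arguments; forcing all those arguments to equal $e$ therefore makes the two sides literally coincide (in the symmetric cases) or both vanish (in the antisymmetric cases), so no condition survives. Collecting the identities that do survive gives exactly the list in the statement, and the corollary follows from Theorem~\ref{th1}.
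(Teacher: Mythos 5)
Your proof is correct and follows exactly the paper's route: the paper likewise proves this corollary by specializing Theorem~\ref{th1} to $A_2={\bf k}e$ with the identifications $\mu(e)=Q_1$, $\mu'(e)=Q_2$, $\lambda(e,e)=\alpha$, $\gamma(e,e)=se^\ast$, $\tau(e,e)=t$, $\varphi(x_1,y_1)=h(x_1,y_1)e^\ast$, $v(e,x_1)=f(x_1)e^\ast$, $v'(x_1,e)=g(x_1)e^\ast$, asserting that the conclusion then ``follows directly.'' Your write-up simply makes explicit what the paper leaves implicit, and your accounting is accurate: the computations $L^\ast_\star(e)e^\ast=-2ke^\ast$ and $R^\ast_\circ(e)e^\ast=-ke^\ast$, the reduction of \eqref{3.4.14} to $3(kt+s)=0$ hence $kt+s=0$ in characteristic zero, and the observation that \eqref{3.4.7}, \eqref{3.4.9}, \eqref{3.4.12}, \eqref{3.4.13}, \eqref{3.4.17}, \eqref{3.4.18} trivialize when all $A_2$-arguments equal $e$ are all correct.
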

	\begin{proof}
Since the dimension of $A_2$ is $1$, we can set $\lambda(e,e)= \alpha$, $\gamma(e,e) = se^{\ast}$, $\tau(e,e) = t$, $\varphi(x_{1},y_{1}) = h(x_{1},y_{1})e^{\ast}$, $ \mu(e)=Q_1$, $\mu^{\prime}(e)=Q_2$, $ v(e,x_{1}) = f(x_{1})e^{\ast}$, $v^{\ast}(x_{1},e) = g(x_{1})e^{\ast}$ in Theorem \ref{th1}, where $h(\cdot,\cdot):A_{1}\times A_{1}\rightarrow {\bf k}$ is a bilinear map, $f,g: A_{1}\rightarrow {\bf k}$, $Q_1$, $Q_2:A_1\rightarrow A_1$ are linear maps, $\alpha\in A_1$, and $t$, $s\in {\bf k}$. Then this conclusion follows directly from Theorem \ref{th1}.	
\end{proof}
\begin{rmk}\label{rmk-dou-1}
By Corollary \ref{cor-com-dim1}, for determining the double extensions of $(A_1, \circ, B_1(\cdot,\cdot))$ by $(A_2={\bf k}e, \circ)$ with a symmetric bilinear form $\tau(\cdot,\cdot)$, we only need to compute
 $h(\cdot,\cdot)$, $f$, $g$, $Q_1$, $Q_2$, $\alpha$, $t$, $s$ in Corollary \ref{cor-com-dim1}.

\end{rmk}

Next, we present an example to construct quadratic Novikov algebras by double extensions of quadratic Novikov algebras.
	 \begin{ex}
	 	Let $(A_{1},\circ)$ be a Novikov algebra with a basis $\{e_{1},e_{2}\}$ defined by
	 	\begin{align*}
	 		e_{1}\circ e_{1} = 0, e_{1}\circ e_{2} = e_{1},
	 		e_{2}\circ e_{1} = -2e_{1}, e_{2}\circ e_{2} = e_{2}.
	 	\end{align*}
	 Define a bilinear map $B_1(\cdot,\cdot)$ on $(A_{1},\circ)$ by
	 	\begin{align*}
	 		B_1(e_{1},e_{2}) = B_1(e_{2},e_{1}) = 1,\; B_1(e_{1},e_{1}) = B_1(e_{2},e_{2}) = 0.
	 	\end{align*}
Then $(A_1, \circ, B_1(\cdot,\cdot))$ is a quadratic Novikov algebra by Theorem \ref{2dim}.

Let $(A_{2}={\bf k}e,\circ)$ be the $1$-dimensional Novikov algebra defined by $e\circ e = e$ and $A_{2}^{\ast}={\bf k}e^\ast$ where $e^{\ast}(e) = 1$. By Remark \ref{rmk-dou-1}, we need to compute
 $h(\cdot,\cdot)$, $f$, $g$, $Q_1$, $Q_2$, $\alpha$, $t$, $s$ in Corollary \ref{cor-com-dim1}. Note that $k=1$ and $t+s=0$. Then by some direct computations, we  obtain that
 \begin{eqnarray*}
 h(e_1,e_1)=h(e_2,e_2)=0,\;\;h(e_1,e_2)=-1,\;\;h(e_2,e_1)=2,\;\alpha=2e_2,\\
 Q_1(e_1)=2e_1,\;Q_1(e_2)=-e_2,\;\;Q_2(e_1)=-e_1,\;Q_2(e_2)=-e_2,\\
 f(e_1)=-4,\;\;f(e_2)=0,\;\;g(e_1)=2,\;\;g(e_2)=0.
\end{eqnarray*}
Then by Corollary \ref{cor-com-dim1}, $({\bf k}e \oplus A_{1}\oplus {\bf k}e^\ast,\cdot, B(\cdot,\cdot))$ is a quadratic Novikov algebra defined by the nonzero products and the symmetric bilinear form $B(\cdot,\cdot)$ as follows:
\begin{eqnarray*}
&&e\cdot e=e+2e_2+se^{\ast},\;e\cdot e^\ast=-2e^\ast,\;e^\ast \cdot e=e^\ast,\;e\cdot e_1=2e_1-4e^\ast,\\
&&e\cdot e_2=-e_2,\; e_1\cdot e=-e_1+2e^\ast,\; e_2\cdot e=-e_2,\\
&&e_1\cdot e_2=e_1-e^\ast,\;e_2\cdot e_1=-2e_1+2e^\ast,\;e_2\cdot e_2=e_2,\\
&&B(e, e^\ast)=B(e_1,e_2)=1,B(e,e)= -s,\\
&&B(e_1,e_1)=B(e_2,e_2)=B(e,e_1)=B(e,e_2)=B(e_1,e^\ast)=B(e_2,e^\ast)=0,
\end{eqnarray*}
where $s\in {\bf k}$.
\delete{$\mu,\mu^{\prime}:A_{2}\rightarrow End_{\bf k}(A_{1})$ be the linear maps such that
	 	\begin{align*}
	 		&\mu(e)(e_{1}) = 2e_{1},\; \mu(e)(e_{2}) = -e_{2},\\
	 		&\mu^{\prime}(e)(e_{1}) = -e_{1},\; \mu^{\prime}(e)(e_{2}) = -e_{2}.
	 	\end{align*}
	 	
	 	Let $\varphi:A_{1}\times A_{1} \rightarrow A_{2}^{\ast}, \lambda:A_{2}\times A_{2}\rightarrow A_{1}, \gamma:A_{2}\times A_{2}\rightarrow A_{2}^{\ast}$ be the bilinear maps satisfying
	 	\begin{align*}
	 		\varphi(e_{1},e_{1}) &= \varphi(e_{2},e_{2}) = \gamma(e,e) = 0,\\
	 		\varphi(e_{1},e_{2}) &= -e^{\ast},
	 		\varphi(e_{2},e_{1}) = 2e^{\ast},\\
	 		\lambda(e,e) &= 2e_{2}.
	 	\end{align*}
	 	And let $v:A_{2}\times A_{1}\rightarrow A_{2}^{\ast}, v^{\prime}: A_{1}\times A_{2}\rightarrow A_{2}^{\ast}$ be the bilinear maps such that
	 	\begin{align*}
	 		v(e,e_{1}) = -4e^{\ast}, v^{\prime}(e_{1},e) = 2e^{\ast} \; \text{and} \; v(e,e_{2}) = v^{\prime}(e_{2},e) = 0.
	 	\end{align*}
	 	Then $(A_{2}\oplus A_{1}\oplus A_{2}^{\ast},\cdot,\hat{B}(\cdot,\cdot))$ is a 4-dimensional quadratic Novikov algebra whose basis is $\{e,e_{1},e_{2},e^{\ast}\}$.}
	 \end{ex}

Note that for a double extension $(A_{2}\oplus A_{1}\oplus A_{2}^{\ast}, B_{1}(\cdot,\cdot),\tau(\cdot,\cdot),\varphi,\mu,\mu^{\prime},v,v^{\prime},\lambda,\gamma)$, $A_{2}^{\ast}$ is an isotropic ideal of this double extension. Therefore, it is natural to consider whether any quadratic Novikov algebra having a nonzero  isotropic ideal  is isomorphic to some double extension.

\delete{Finally, we consider what kinds of quadratic Novikov algebras can be obtained from double extensions of quadratic Novikov algebras.}
	 \begin{lem}\label{ideal}
	 	Let $(A,\circ,B(\cdot,\cdot))$ be a quadratic Novikov algebra and $I$ be an ideal of $(A, \circ)$. Then  $(I + I^{\bot})/(I\cap I^{\bot})$ is a quadratic Novikov algebra with the bilinear form $\bar{B}(\cdot,\cdot)$ defined by $\bar{B}(\bar{x},\bar{y}) = B(x,y)$ for all $x,y \in I + I^{\bot}$.
	 \end{lem}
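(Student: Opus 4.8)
The plan is to verify in turn that the quotient is a well-defined Novikov algebra, that $\bar{B}$ descends to it as a symmetric bilinear form, that this form is nondegenerate, and that it satisfies the invariant property \eqref{def-eq1}.

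First I would record the algebraic setup. By Lemma \ref{nonis}(1), $I^\perp$ is an ideal of $(A,\circ)$, so $I+I^\perp$ is a subalgebra (in fact an ideal) of $(A,\circ)$, while $I\cap I^\perp$ is an intersection of ideals, hence an ideal of $(A,\circ)$ and in particular an ideal of the subalgebra $I+I^\perp$. Consequently $(I+I^\perp)/(I\cap I^\perp)$ carries a well-defined Novikov algebra structure under the induced product $\bar{x}\circ\bar{y}=\overline{x\circ y}$. Next I would check that $\bar{B}$ is well defined, which amounts to showing $B(I\cap I^\perp,\,I+I^\perp)=0$: given $z\in I\cap I^\perp$ and $w=w_1+w_2$ with $w_1\in I$, $w_2\in I^\perp$, we have $B(z,w_1)=0$ since $z\in I^\perp$ and $w_1\in I$, and $B(z,w_2)=B(w_2,z)=0$ since $z\in I$ and $w_2\in I^\perp$, so $B(z,w)=0$. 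This makes $\bar{B}(\bar{x},\bar{y})=B(x,y)$ independent of the choice of representatives, and symmetry of $\bar{B}$ is inherited from that of $B$.

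The main point is nondegeneracy, and this is the step I expect to be the crux. I would compute the radical of the restricted form, namely $\{x\in I+I^\perp : B(x,I+I^\perp)=0\}=(I+I^\perp)\cap(I+I^\perp)^\perp$. Using the standard identity $(U+W)^\perp=U^\perp\cap W^\perp$ together with the finite-dimensional nondegeneracy fact $(I^\perp)^\perp=I$, one obtains $(I+I^\perp)^\perp=I^\perp\cap(I^\perp)^\perp=I^\perp\cap I$; since $I\cap I^\perp\subseteq I+I^\perp$, the radical equals exactly $I\cap I^\perp$. Thus the radical of $B|_{(I+I^\perp)\times(I+I^\perp)}$ is precisely the subspace we divide out, so $\bar{B}$ is nondegenerate on $(I+I^\perp)/(I\cap I^\perp)$. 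The delicate ingredient here is $(I^\perp)^\perp=I$, which is where finite-dimensionality and the global nondegeneracy of $B$ on $A$ enter.

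Finally, the invariant property transfers directly. Since $I+I^\perp$ is closed under $\circ$ and hence under $\star$, for all $a,b,c\in I+I^\perp$ we get
\[
\bar{B}(\bar{a}\circ\bar{b},\bar{c})=B(a\circ b,c)=-B(b,a\star c)=-\bar{B}(\bar{b},\bar{a}\star\bar{c}),
\]
so $\bar{B}$ satisfies \eqref{def-eq1}. Collecting these facts shows that $\big((I+I^\perp)/(I\cap I^\perp),\,\circ,\,\bar{B}(\cdot,\cdot)\big)$ is a quadratic Novikov algebra, completing the argument.
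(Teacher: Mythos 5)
Your proof is correct and follows essentially the same route as the paper: both arguments reduce nondegeneracy to showing that the radical of $B$ restricted to $I+I^{\perp}$ is exactly $I\cap I^{\perp}$, with finite-dimensionality entering through the fact $(I^{\perp})^{\perp}=I$. The only difference is presentational --- the paper chases elements (writing $x=x_1+x_2$ with $x_1\in I$, $x_2\in I^{\perp}$ and testing against $I$ and $I^{\perp}$ separately, using $(I^{\perp})^{\perp}=I$ implicitly), while you package the same computation via the identities $(U+W)^{\perp}=U^{\perp}\cap W^{\perp}$ and $(I^{\perp})^{\perp}=I$, and you spell out the well-definedness of the quotient algebra and of $\bar{B}$, which the paper dismisses as easy.
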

	 \begin{proof}
	 It is easy to see that $\bar{B}(\cdot,\cdot)$ is well-defined. Since $B(\cdot,\cdot)$ is symmetric and invariant, $\bar{B}(\cdot,\cdot)$ is also symmetric and invariant.
	 	
	 	Let $\bar{x}\in (I + I^{\bot})/(I\cap I^{\bot})$ such that $\bar{B}(\bar{x},\bar{y}) = 0$ for all $\bar{y} \in (I + I^{\bot})/(I\cap I^{\bot})$. Let $x = x_1 + x_2$ where $x_1\in I$ and $x_2 \in I^{\bot}$. Then we obtain $\bar{B}(\bar{x},\bar{a}) = B(x,a) = B(x_1 + x_2, a) =B(x_1,a) = 0$ for all $a\in I$. Hence $x_1 \in I^{\bot}$, that is $x_1\in I\cap I^{\bot}$. Thus $\bar{x_1} = \bar{0}$.
	 	Similarly, we also have $\bar{B}(\bar{x},\bar{b}) = B(x_2,b) = 0$ for all $b\in I^{\bot}$. Then $x_2\in I$, which means that $x_2\in I\cap I^{\bot}$ and $\bar{x_2} = \bar{0}$. Then $\bar{x} = \overline{x_1 + x_2} = \bar{0}$. Thus $\bar{B}(\cdot,\cdot)$ is nondegenerate. Therefore $(I + I^{\bot})/(I\cap I^{\bot})$ is a quadratic Novikov algebra.
	 \end{proof}
	 \begin{lem}\label{lem-1}
	 	Let $(A,\circ,B(\cdot,\cdot))$ be a quadratic Novikov algebra and $J$ be a nonzero isotropic ideal of $(A,\circ)$. Let $A = J^{\bot}\oplus V$ where $V$ is a subspace of $A$, and $S = J\oplus V$. Then $B(\cdot,\cdot)|_{S\times S}$ is nondegenerate and    $
	 		A = J\oplus S^{\bot}\oplus V.
	 	$
	 \end{lem}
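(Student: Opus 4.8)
The plan is to first record the elementary consequences of $J$ being isotropic, then establish the nondegeneracy of $B(\cdot,\cdot)|_{S\times S}$ by a direct radical computation, and finally deduce the triple decomposition from the standard fact that a nondegenerate restriction splits $A$.

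First I would observe that, since $J$ is isotropic, $B(J,J)=0$ and hence $J\subseteq J^{\perp}$; moreover, straight from the definition of $J^{\perp}$ together with the symmetry of $B(\cdot,\cdot)$, every $x\in J$ satisfies $B(x,J^{\perp})=0$. Because $J\subseteq J^{\perp}$ and $V$ is chosen so that $A=J^{\perp}\oplus V$, we get $J\cap V\subseteq J^{\perp}\cap V=0$, so the sum $S=J\oplus V$ is genuinely direct (and a dimension count gives $\dim V=\dim J$, though this is not strictly needed).

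Next I would prove that $B(\cdot,\cdot)|_{S\times S}$ is nondegenerate by showing its radical is trivial. Take $s\in S$ with $B(s,S)=0$ and write $s=x+v$ with $x\in J$ and $v\in V$. Pairing against $J\subseteq S$ and using $B(x,J)=0$ (isotropy) gives $B(v,J)=0$, i.e. $v\in J^{\perp}$; since $V\cap J^{\perp}=0$ this forces $v=0$, so $s=x\in J$. The key step is then to check that $x$ is $B$-orthogonal to all of $A$: writing $A=J^{\perp}\oplus V$, we have $B(x,J^{\perp})=0$ by the observation above and $B(x,V)=B(s,V)=0$ by assumption, whence $B(x,A)=0$, and nondegeneracy of $B(\cdot,\cdot)$ on $A$ yields $x=0$. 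Thus $s=0$.

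Finally, with $B(\cdot,\cdot)|_{S\times S}$ nondegenerate, I would invoke the same linear-algebra argument used for the decomposition in Lemma \ref{nonis}(2): the triviality of the radical gives $S\cap S^{\perp}=0$, while $\dim S^{\perp}=\dim A-\dim S$ forces $A=S\oplus S^{\perp}$ as vector spaces. Substituting $S=J\oplus V$ then gives $A=J\oplus V\oplus S^{\perp}=J\oplus S^{\perp}\oplus V$, as claimed. The only genuinely novel input is the orthogonality $B(J,J^{\perp})=0$ feeding the radical computation; the remainder is bookkeeping, so I expect no serious obstacle beyond keeping the three complementary summands straight and correctly using that $V$ is a complement of $J^{\perp}$ rather than of $J$.
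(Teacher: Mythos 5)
Your proof is correct and follows essentially the same route as the paper's: both compute the radical of $B(\cdot,\cdot)|_{S\times S}$ by writing $s=x+v$, killing the $V$-component via pairing with $J$ together with $J^{\perp}\cap V=0$, killing the $J$-component via $B(J,J^{\perp})=0$ and $B(s,V)=0$ with $A=J^{\perp}\oplus V$, and then concluding $A=S\oplus S^{\perp}$ by the splitting argument of Lemma \ref{nonis}. Your explicit remark that only the linear-algebra part of that lemma is invoked is a slight gain in precision, since $S=J\oplus V$ need not be a subalgebra, a point the paper glosses over when it cites Lemma \ref{nonis} directly.
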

	 \begin{proof}
	 	By  Lemma \ref{nonis}, $J^{\bot}$ is an ideal	of $(A, \circ)$. Since $J$ is an isotropic ideal of $(A,\circ)$, we have $J\subset J^{\bot}$ and $B(J,J)=0$.

Let $s=x+v \in S\cap S^{\bot}$ where $x\in J$ and $v\in V$. Then for all $t\in J$, we have $0=B(s,t)=B(x+v,t)=B(t,v)$. Therefore, $v\in J^{\bot}\cap V$. Since $J^{\bot}\cap V=0$, we get $v=0$. Then we obtain $B(x, S)=0$. Note that $B(x, J^{\bot})=0$. Therefore $B(x, S+J^{\bot})=0$. Hence we get $B(x, A)=0$. Since $B(\cdot,\cdot)$ is nondegenerate, we have $x= 0$, i.e. $s = 0$.  Therefore, $B(\cdot,\cdot)|_{S\times S}$ is nondegenerate.
Then by Lemma \ref{nonis}, we have $A = S\oplus S^{\bot} = J\oplus S^{\bot}\oplus V$. This completes the proof.
	 \end{proof}
\delete{
	 \begin{lem}
	 	Let $J$ be a maximal ideal of $A$ such that $A = J\oplus V$ where $V$ is a subspace of $A$. If $S = J^{\bot}\oplus V$, $B(\cdot,\cdot)$ restricted on $S$ is nondegenerate and
	 	\begin{align*}
	 		A = J^{\bot}\oplus S^{\bot}\oplus V.
	 	\end{align*}
	 \end{lem}
	 \begin{proof}
	 	Let $s=j+v \in S^{\bot} \cap S$ where $j\in J^{\bot}$ and $v\in V$. For $t = j^{\prime} + v^{\prime} \in S$, we have $B(j+v,j^{\prime} + v^{\prime}) = B(j,v^{\prime}) + B(v,j^{\prime}) + B(v,v^{\prime}) = 0$. We take $t = j^{\prime} \in J^{\bot}$, then $B(s,t) = B(v,j^{\prime}) = 0$. Since $j^{\prime}\in J^{\bot}$, we get $v\in J\cap V$, that is $v = 0$. Then we have $B(j,j^{\prime} + v^{\prime}) = 0$. And by $B(j,J) = 0$, we obtain that $B(j,j^{\prime} + v^{\prime} + J) = 0$, that is $B(j,A) = 0$. Since $B(\cdot,\cdot)$ is nondegenerate, $j = 0$ and $s = 0$. Thus $B(\cdot,\cdot)$ restricted on $S$ is nondegenerate. By Lemma \ref{nonis}, we have $A = S\oplus S^{\bot} = J^{\bot}\oplus S^{\bot}\oplus V$. This completes the proof.
	 \end{proof}}
\begin{rmk}
By Lemma \ref{lem-1}, we have $J\subset J^{\bot}$ and $S^{\bot}\subset J^{\bot}$. Therefore, $J\oplus S^{\bot}\subset J^{\bot}$. Since $A = J^{\bot}\oplus V$ and $A = J\oplus S^{\bot}\oplus V$, we have $J^{\bot} = J\oplus S^{\perp}$.
\end{rmk}
	 \begin{pro}\label{iso}
	 	Let $(A,\circ,B(\cdot,\cdot))$ be a quadratic Novikov algebra, $J$ be a nonzero isotropic ideal of $(A,\circ)$. Let $A = J^{\bot}\oplus V$ where $V$ is a subspace of $A$, and $S = J\oplus V$. Then $(W=J^{\bot}/J,\circ,\bar{B}(\cdot,\cdot))$ is a quadratic Novikov algebra with the bilinear form $\bar{B}(\cdot,\cdot)$ defined by $\bar{B}(\bar{x},\bar{y}) = B(x,y)$ for all $x,y \in J^{\bot}$ and there exists a quadratic Novikov algebra structure on $S^{\bot}$ which is isomorphic to $(W,\circ,\bar{B}(\cdot,\cdot))$.
	 \end{pro}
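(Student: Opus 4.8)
The plan is to derive both assertions from the structural results already established, so that essentially no new computation is required. For the first assertion I would apply Lemma \ref{ideal} with $I = J$. Since $J$ is isotropic we have $B(J,J)=0$, hence $J \subseteq J^{\bot}$; consequently $J + J^{\bot} = J^{\bot}$ and $J \cap J^{\bot} = J$. Substituting into Lemma \ref{ideal} gives that $J^{\bot}/J = W$ is a quadratic Novikov algebra whose form is exactly $\bar{B}(\bar{x},\bar{y}) = B(x,y)$, which is the claimed statement.

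For the second assertion, the key input is the decomposition $J^{\bot} = J \oplus S^{\bot}$ recorded in the Remark following Lemma \ref{lem-1}. Let $p\colon J^{\bot} \to J^{\bot}/J = W$ be the canonical projection and set $\pi := p|_{S^{\bot}}$. Because the sum $J^{\bot} = J \oplus S^{\bot}$ is direct, $\ker \pi = S^{\bot} \cap J = 0$, and every class in $W$ is represented by an element of $S^{\bot}$; thus $\pi\colon S^{\bot} \to W$ is a linear isomorphism. I would then transport the quadratic Novikov structure of $W$ to $S^{\bot}$ along $\pi$.

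Concretely, since $J^{\bot}$ is an ideal of $(A,\circ)$ by Lemma \ref{nonis}(1), for $x,y \in S^{\bot} \subseteq J^{\bot}$ the product $x \circ y$ again lies in $J^{\bot} = J \oplus S^{\bot}$. I would define the multiplication on $S^{\bot}$ by $x \ast y := \mathrm{pr}_{S^{\bot}}(x \circ y)$, the $S^{\bot}$-component of $x\circ y$, and the bilinear form by the restriction $B|_{S^{\bot}\times S^{\bot}}$. With these definitions one has $\pi(x \ast y) = \overline{x\circ y} = \bar{x}\circ\bar{y} = \pi(x)\circ\pi(y)$, because the $J$-component of $x\circ y$ dies in the quotient, and $B(x,y) = \bar{B}(\bar{x},\bar{y}) = \bar{B}(\pi(x),\pi(y))$; hence $\pi$ is an isomorphism of quadratic Novikov algebras and $(S^{\bot},\ast,B|_{S^{\bot}\times S^{\bot}})$ inherits the quadratic Novikov structure from $W$.

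The argument is essentially a bookkeeping exercise, so I do not expect a genuine obstacle; the only point demanding a little care is that $S^{\bot}$ need not be a subalgebra of $(A,\circ)$ (the subspace $V$, and hence $S = J\oplus V$, is not assumed closed under $\circ$), which is precisely why the multiplication on $S^{\bot}$ must be defined by projecting $x\circ y$ into $S^{\bot}$ rather than by simply restricting $\circ$. Once the decomposition $J^{\bot} = J\oplus S^{\bot}$ is invoked, the transport of structure makes the isomorphism automatic.
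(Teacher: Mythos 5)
Your proposal is correct and follows essentially the same route as the paper: both apply Lemma \ref{ideal} with $I=J$ for the first assertion, and both use the decomposition $J^{\bot}=J\oplus S^{\bot}$ to define the product on $S^{\bot}$ as the $S^{\bot}$-component of $x\circ y$ (the paper's $\diamond$ is exactly your $\ast$) and take the map $x\mapsto \bar{x}$ as the isomorphism. The only cosmetic difference is that the paper verifies the quadratic Novikov axioms on $(S^{\bot},\diamond,B|_{S^{\bot}\times S^{\bot}})$ directly (invoking Lemmas \ref{lem-1} and \ref{nonis} for nondegeneracy) before exhibiting the isomorphism, whereas you transport the structure from $W$ along the bijection $\pi$, which is logically equivalent and slightly more economical.
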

	 \begin{proof}
	 	Since $J^{\bot}/J = (J + J^{\bot})/(J \cap J^{\bot})$, we have $(J^{\bot}/J,\circ,\bar{B}(\cdot,\cdot))$ is a quadratic Novikov algebra by Lemma \ref{ideal}.
	 	
	 	Let $x$, $y\in S^{\bot}$. Since $S^{\bot}\subset J^{\bot}$, we assume that $x\circ y = \varphi(x,y) + x\diamond y$, where $\varphi(x, y)\in J$ and $x\diamond y\in S^{\perp}$. Since $J$ is an ideal of $J^{\bot}$, it is easy to check that $(S^{\perp}, \diamond)$ is a Novikov subalgebra. Since $B(\cdot,\cdot)\mid_{S\times S}$ is nondegenerate, we have that $\hat{B}(\cdot,\cdot)=B(\cdot,\cdot)\mid_{S^{\bot}\times S^{\bot}}$ is nondegenerate by Lemma  \ref{nonis}. Moreover, since $B(\cdot,\cdot)$ is invariant and symmetric and $B(S^{\bot}, J)=0$, then $\hat{B}(\cdot,\cdot)$ is also invariant and symmetric on $(S^{\bot}, \diamond)$. Hence $(S^{\bot},\diamond,\hat{B}(\cdot,\cdot))$ is a quadratic Novikov algebra.
	 	
	 	Let $\theta: S^{\bot}\rightarrow W$ be a linear map defined by $\theta(x) = \bar{x}$ for all $x\in S^{\perp}$. If $\bar{x} = 0$, we have $x\in J$. Since $x\in S^{\perp}$, we get $x = 0$. Thus $\theta$ is injective. Note that $\text{dim} W= \text{dim} S^{\perp}$. Therefore, $\theta$ is a bijection. Moreover, since
	 	\begin{align*}
	 		\theta(x\diamond y) = \overline{x\diamond y} = \overline{\varphi(x, y) + x\diamond y} = \overline{x\circ y} = \bar{x}\circ \bar{y} = \theta(x)\circ \theta(y),\;\; x, y\in S^{\bot},
	 	\end{align*}
	 	$\theta$ is an isomorphism of Novikov algebras. It is easy to see that $\bar{B}(\theta(x),\theta(y))=\hat{B}(x,y)$ for all $x$, $y\in S^{\bot}$. Therefore, $\theta$ is an isomorphism of quadratic Novikov algebras.		
	 \end{proof}

	 \begin{thm}\label{thm-4}
	 	Let $(A,\circ,B(\cdot,\cdot))$ be a quadratic Novikov algebra and $J$ be a nonzero isotropic ideal of $(A,\circ)$. Let $A = J^{\bot}\oplus V$ where $V$ is a subspace of $A$,  $S = J\oplus V$ and $(W=J^{\bot}/J,\circ,\bar{B}(\cdot,\cdot))$ be the quadratic Novikov algebra given in Proposition \ref{iso}. Then the quadratic Novikov algebra $(A = V\oplus S^{\bot}\oplus J, \circ,B(\cdot,\cdot))$ is isomorphic to a double extension of $(W=J^{\bot}/J,\circ, \bar{B}(\cdot,\cdot))$ by a Novikov algebra $(V, \bullet)$ with a symmetric bilinear form $B(\cdot,\cdot)|_{V\times V}$.
	 \end{thm}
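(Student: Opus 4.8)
The plan is to show that, with respect to the decomposition $A = V\oplus S^{\bot}\oplus J$, the product $\circ$ and the form $B(\cdot,\cdot)$ of $A$ already have exactly the shape of the double extension in Theorem \ref{th1}, with $A_1=(S^{\bot},\diamond,\hat{B}(\cdot,\cdot))$ the quadratic Novikov algebra of Proposition \ref{iso}, $A_2=V$, and $A_2^{\ast}$ identified with $J$; transporting $A_1$ to $W$ along the isomorphism $\theta$ of Proposition \ref{iso} will then exhibit $A$ as a double extension of $W$. First I would record the orthogonality relations forced by the hypotheses. Since $J$ is isotropic, $B(J,J)=0$; and since $S^{\bot}=\{a\in A\mid B(a,S)=0\}$ with $J,V\subset S$, one gets $B(S^{\bot},J)=0$ and $B(S^{\bot},V)=0$. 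As $\dim V=\dim J$ and $B(\cdot,\cdot)$ is nondegenerate, the pairing $B\colon V\times J\to{\bf k}$ is nondegenerate, so $j\mapsto B(j,\cdot)|_{V}$ gives an isomorphism $\Phi\colon J\to V^{\ast}$. Consequently the metric of $A$ in the decomposition $V\oplus S^{\bot}\oplus J$ agrees termwise with the double extension form $\tau(x_2,y_2)+\hat B(x_1,y_1)+f(y_2)+g(x_2)$, where $\tau=B(\cdot,\cdot)|_{V\times V}$.

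Next I would define the structure maps by projection. Let $p_V,p_S,p_J$ be the projections onto $V,S^{\bot},J$. For $x,y\in V$ set $x\bullet y=p_V(x\circ y)$, $\lambda(x,y)=p_S(x\circ y)$, $\gamma(x,y)=p_J(x\circ y)$; for $x\in V$, $y\in S^{\bot}$ set $\mu(x)y=p_S(x\circ y)$, $v(x,y)=p_J(x\circ y)$ and $\mu^{\prime}(x)y=p_S(y\circ x)$, $v^{\prime}(y,x)=p_J(y\circ x)$; on $S^{\bot}$ use the maps $\diamond$ and $\varphi$ from Proposition \ref{iso}. Because $J^{\bot}$ is an ideal by Lemma \ref{nonis}, $(V,\bullet)$ is just the quotient $A/J^{\bot}$ transported back to $V$, hence a Novikov algebra, and $\tau$ is symmetric. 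The heart of the computation is to evaluate the nine bilinear pieces of $(x_2+x_1+f)\circ(y_2+y_1+g)$: using the invariance $B(a\circ b,c)=-B(b,a\star c)$ one checks that $S^{\bot}\circ J=J\circ S^{\bot}=J\circ J=0$ (each product lies in $J$ and pairs trivially with $V$), that $x\circ y\in J^{\bot}$ whenever $x\in V$, $y\in S^{\bot}$ (so $\mu,\mu^{\prime},v,v^{\prime}$ are well defined), and, crucially, that $\Phi(x\circ g)=L^{\ast}_{\star}(x)\Phi(g)$ and $\Phi(f\circ y)=-R^{\ast}_{\circ}(y)\Phi(f)$ for $x,y\in V$ and $f,g\in J$. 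Assembling these identities will show that $\circ$ coincides with the double extension product of Theorem \ref{th1} for the data just defined.

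Finally, since $(A,\circ,B(\cdot,\cdot))$ is by hypothesis a quadratic Novikov algebra and its product and form are now written in double extension form, the ``only if'' direction of Theorem \ref{th1} guarantees that the data $(\varphi,\mu,\mu^{\prime},v,v^{\prime},\lambda,\gamma)$ satisfy Eqs. (\ref{3.4.1})--(\ref{3.4.18}); thus $A$ is genuinely a double extension of $(S^{\bot},\diamond,\hat B(\cdot,\cdot))$ by $(V,\bullet)$ with symmetric bilinear form $B(\cdot,\cdot)|_{V\times V}$. Composing with the isomorphism $\theta\colon(S^{\bot},\diamond,\hat B(\cdot,\cdot))\to(W,\circ,\bar B(\cdot,\cdot))$ of Proposition \ref{iso} (and $\Phi$ on the dual factor) rewrites this as a double extension of $W$, which is the claimed isomorphism. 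The main obstacle I expect is the bookkeeping behind the two ``action'' identities $\Phi(x\circ g)=L^{\ast}_{\star}(x)\Phi(g)$ and $\Phi(f\circ y)=-R^{\ast}_{\circ}(y)\Phi(f)$ together with the vanishing of the off-diagonal products; all of these reduce to repeated use of the invariance of $B(\cdot,\cdot)$ and the orthogonality relations $B(S^{\bot},V)=B(S^{\bot},J)=B(J,J)=0$, but keeping track of which component each of the nine products lands in is where care is required.
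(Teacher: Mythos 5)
Your proposal is correct and follows essentially the same route as the paper's proof: both decompose the product along $A=V\oplus S^{\bot}\oplus J$, derive the vanishing $J\circ J^{\bot}=J^{\bot}\circ J=0$ from invariance, define $\bullet$, $\lambda$, $\gamma$, $\mu$, $\mu^{\prime}$, $v$, $v^{\prime}$ by projections, identify $J$ with $V^{\ast}$ via $\Phi$ so that your two ``action'' identities say exactly that $\Phi$ is an isomorphism of representations of $(V,\bullet)$ onto $(V^{\ast}, L^{\ast}_{V,\star}, -R^{\ast}_{\bullet})$, and then transport along $\theta$ and $\Phi$ and invoke the if-and-only-if characterization of Theorem \ref{th1}. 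The only caveat is notational: your $L^{\ast}_{\star}(x)$ and $R^{\ast}_{\circ}(y)$ must be taken with respect to the operation $\bullet$ on $V$ (the paper's $L^{\ast}_{V,\star}$ and $R^{\ast}_{\bullet}$), which is indeed what your computation produces.
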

	 \begin{proof}
Since $A=J^{\bot}\oplus V$, for all $u$, $v\in V$, we can set $u\circ v=\alpha(u, v)+u\bullet v$, where $\alpha(u, v)\in J^{\bot}$ and $u\bullet v\in V$. It is direct to check that $(V, \bullet)$ is a Novikov algebra. Let $L_{V,\star}=L_\bullet +R_\bullet$. Set $\tilde{B}(\cdot,\cdot)=B(\cdot,\cdot)|_{V\times V}$ and $\hat{B}(\cdot,\cdot)=B(\cdot,\cdot)|_{S^{\bot}\times S^{\bot}}$.

Let $\Phi: J \rightarrow V^{\ast}$ be the linear map defined as $\Phi(x)(u) = B(x,u)$ for all $x\in J$ and $u\in V$. Since $A=J^{\bot}\oplus V$ and $B(\cdot,\cdot)$ is nondegenerate, we get that $\varphi$ is a bijection.

Let $a\in A$, $x\in J^{\bot}$ and $y\in J$. Since $B(a,x\circ y) = -B(y,x\star a) \in B(J, J^{\bot}) = 0$ for all $a\in A$, we get $J^{\bot}\circ J = 0$, by the nondegenerate property of $B(\cdot,\cdot)$. Similarly, $J\circ J^{\bot}=0$. Therefore, we obtain that $(V\circ V)\circ J=(V \bullet V)\circ J$ and $J \circ (V\circ V)=J \circ(V\bullet V)$. Since $J$ is an ideal of $(A, \circ)$, $(J, L_\circ, R_\circ)$ is also a representation of $(V, \bullet)$. Let $u$, $v\in V$, $x\in J$. We obtain that
	 	\begin{eqnarray*}
	 		\Phi(L_\circ (u)x)(v)&=&\Phi(u\circ x)(v) = B(u\circ x,v) = -B(x,u\circ v+v\circ u) \\
&=&-B(x,\alpha(u, v)+\alpha(v, u)+L_{V,\star}(v)w) \\
&=&-B(x, L_{V,\star}(v)w)= -\Phi(x)(L_{V,\star}(v)w) = (L^{\ast}_{V,\star}(v)\Phi(x))w,\\
	 		\Phi(R_\circ(u)x)(v)&=& \Phi(x\circ u)(v) = B(x\circ u,v) = B(v\circ u,x) = B(\alpha(v, u)+v\bullet u, x)=B(v\bullet u, x)\\
 &=&B(R_{\bullet}(u)v,x) = \Phi(x)(R_{\bullet}(v)u) = (-R^{\ast}_{\bullet}(v)\Phi(x))u.
	 	\end{eqnarray*}
 Therefore $\Phi$ is an isomorphism of representations of $(V, \bullet)$ from $(J, L_\circ, R_\circ)$ to $(V^\ast, L^{\ast}_{V,\star}, -R^{\ast}_{\bullet})$.

 Let $a = u+ y_1 + x_1$ and $b = v+y_2 + x_2\in A=V \oplus S^{\bot}\oplus J$, where $u$, $v\in V$, $x_1$, $x_2\in J$ and $y_1$, $y_2\in S^{\bot}$. Note that $J$ is an ideal of $(A,\circ)$, and $J^{\bot}\circ J=J\circ J^{\bot}=0$.
	 	In addition, since $S=J\oplus V$ and $J^{\bot}=J\oplus S^{\bot}$, we have $J\circ J=J\circ S^{\bot}= S^{\bot}\circ J=0$ and $S^{\bot} \circ S^{\bot}\subset J^{\bot}\circ J^{\bot}\subset J\oplus S^{\bot}$. Moreover, $V\circ S^{\bot}\subset V\circ J^{\bot}\subset S^{\bot}\oplus J$ and $S^{\bot} \circ V\subset J^{\bot}\circ V\subset S^{\bot}\oplus J$. Therefore, we can assume that
	 	\begin{align*}
	 		a\circ b &= (u + y_1 + x_1)\circ(v + y_2 + x_2 ) \\&= u\circ v + u\circ y_2+ u\circ x_2 + y_1\circ v + y_1\circ y_2 + x_1\circ v \\
&= u\bullet v + \lambda(u,v) + \gamma(u,v) + \mu(u)(y_2)+ h(u,y_2) + L_\circ(u) x_2 + y_1\diamond y_2 + \varphi(y_1,y_2) \\
&\quad+ \mu^{\prime}(v)(y_1)+ h^{\prime}(y_1,v) + R_\circ (v)x_{1},
	 	\end{align*}
	 	where $\lambda:V\times V \rightarrow S^{\bot}$, $\gamma:V\times V \rightarrow J$, $h: V\times S^{\bot} \rightarrow J$, $h^{\prime}: S^{\bot}\times V \rightarrow J$ and $\varphi: S^{\bot}\times S^{\bot}\rightarrow J$ are bilinear maps, and $\mu$, $\mu^{\prime}: V\rightarrow \text{End}_{\bf k}(S^{\bot})$ are linear maps. Moreover, we obtain
\begin{eqnarray*}
B(a, b)&=&B(u + y_1 + x_1,v + y_2 + x_2)\\
&=&B(u, v)+B(u,x_2)+B(y_1,y_2)+B(x_1, v)\\
&=&\tilde{B}(u,v)+\hat{B}(y_1,y_2)+\Phi(x_2)u+\Phi(x_1)v.
\end{eqnarray*}

Let $\sigma: A=V \oplus S^{\bot}\oplus J\rightarrow V\oplus W\oplus V^{\ast}$ be the linear map defined by $\sigma(u + y + x) = u + \theta(y) + \Phi(x)$, where $x\in J$, $y\in S^{\perp}$, $u\in V$ and $\theta: S^{\bot} \rightarrow W$ is the linear map defined in the proof of Proposition \ref{iso}. Since $\theta$ and $u$ is bijective, $\tau$ is a linear isomorphism. Then it is easy to see that $(V\oplus W\oplus V^\ast, \bar{B}(\cdot,\cdot), \tilde{B}(\cdot,\cdot), \Phi\circ \varphi\circ (\theta^{-1}\times \theta^{-1}), \mu_1, \mu_1^{\prime}, \Phi\circ h\circ (\id \times \theta^{-1}), \Phi\circ h^{\prime}\circ (\theta^{-1}\times \id), \theta\circ \lambda, \Phi\circ \gamma)$ is a double extension of
of $(W=J^{\bot}/J,\circ, \bar{B}(\cdot,\cdot))$ by the Novikov algebra $(V, \bullet)$ with a symmetric bilinear form $\tilde{B}(\cdot,\cdot)$, where $\mu_1(u)=\theta\circ \mu(u)\circ\theta^{-1}$ and $\mu_1^{\prime}(u)=\theta\circ \mu^{\prime}(u)\circ\theta^{-1}$ for all $u\in V$. Moreover, one can check that $\sigma$ is an isomorphism of quadratic Novikov algebras from $(A, \circ, B(\cdot,\cdot))$ to $(V\oplus W\oplus V^\ast, \bar{B}(\cdot,\cdot), \tilde{B}(\cdot,\cdot), \Phi\circ \varphi\circ (\theta^{-1}\times \theta^{-1}), \mu_1, \mu_1^{\prime}, \Phi\circ h\circ (\id \times \theta^{-1}), \Phi\circ h^{\prime}\circ (\theta^{-1}\times \id), \theta\circ \lambda, \Phi\circ \gamma)$.  This completes the proof.
	 \end{proof}

\begin{rmk}
We consider whether all nontrivial quadratic Novikov algebras of dimension $3$ are obtained from some double extensions of a $1$-dimensional trivial quadratic Novikov algebra by a $1$-dimensional Novikov algebra with a symmetric bilinear form. By Theorem \ref{3-dim}, we only need to consider the three cases as follows.
\begin{enumerate}
\item For Type $(A7) (l=-2)$, $J=\mathbb{C}e_1$ is an isotropic ideal. Then $J^\bot=\mathbb{C} e_1\oplus \mathbb{C} e_2$, $V=\mathbb{C} e_3$, $S=\mathbb{C}e_1 \oplus \mathbb{C} e_3$ and $S^\bot=\mathbb{C} e_2$;
\item For Type $(C5) (l=-2)$, $J=\mathbb{C}e_1$ is an isotropic ideal. Then
$J^\bot=\mathbb{C} e_1\oplus \mathbb{C} e_2$, $V=\mathbb{C} e_3$, $S=\mathbb{C}e_1 \oplus \mathbb{C} e_3$ and $S^\bot=\mathbb{C} e_2$;
\item For Type $(D6) (l=-\frac{1}{2})$, $J=\mathbb{C}e_2$ is an isotropic ideal. Then
$J^\bot=\mathbb{C} e_1\oplus \mathbb{C} e_2$, $V=\mathbb{C} e_3$, $S=\mathbb{C}e_2 \oplus \mathbb{C} e_3$ and $S^\bot=\mathbb{C} e_1$.
\end{enumerate}
Therefore by Theorem \ref{thm-4}, all nontrivial quadratic Novikov algebras of dimension $3$ are obtained from some double extensions of a $1$-dimensional trivial quadratic Novikov algebra by a $1$-dimensional Novikov algebra with a symmetric bilinear form.
\end{rmk}
	
	\delete{
	\begin{ex}
		Let $(A = V\oplus S^{\bot}\oplus J = {\bf k}v + {\bf m}s + {\bf n}j,\circ,B(\cdot,\cdot))$ be the 3-dimensional quadratic Novikov algebra of case (A7)($ l = -2 $) in [Table 2].
		Then $A$ is isomorphic to the double extension $V\oplus W\oplus V^{\ast} = {\bf k}v\oplus {\bf m}w \oplus {\bf n}v^{\ast}$ of $W$ by $V$, where $w\circ w = 0,\; v\circ v = \lambda(v,v) = w,\; \mu(v) = \mu^{\prime}(v) = 0,\; v^{\prime}(w,v) = v^{\ast},\; v(v,w) = -2v^{\ast},\; v^{\ast}(v) = B(v,j)$.
	\end{ex}}
	
\noindent {\bf Acknowledgments.} This research is supported by
NSFC (No. 12171129) and  the Zhejiang
Provincial Natural Science Foundation of China (No. Z25A010006).

\smallskip

\noindent
{\bf Declaration of interests. } The authors have no conflicts of interest to disclose.

\smallskip

\noindent
{\bf Data availability. } No new data were created or analyzed in this study.

\UseRawInputEncoding

\end{document}